\newcommand{\dsp}{\displaystyle}
\newcommand{\Om}{\Omega}
\newcommand{\rel}{\#}
\newcommand{\N}{\mathbb{N}}
\newcommand{\R}{\mathbb{R}}
\newcommand{\mL}{\mrm{L}}
\newcommand{\mH}{\mrm{H}}
\newcommand{\mX}{\mrm{X}}
\newcommand{\mY}{\mrm{Y}}
\newcommand{\loc}{\mbox{\scriptsize loc}}
\newtheorem{theorem}{Theorem}[section]
\newtheorem{remark}{Remark}[section]
\newtheorem{proposition}{Proposition}[section]
\def\R{\mathbb R}
\def\N{\mathbb N}
\def\Om{\Omega}
\newcommand{\mrm}[1]{\mathrm{#1}}
\newcommand{\eps}{\varepsilon}
\DeclareMathOperator{\supp}{\mathrm{supp}}
\newcommand{\Cplx}{\mathbb{C}}
\newcommand{\herg}{H}
\begin{document}

~\vspace{0.4cm}
\begin{center}
{\sc \bf\LARGE 
\scalebox{1.04}{Inside-outside duality with artificial backgrounds}}\\[15pt]
\end{center}

\begin{center}
\textsc{Lorenzo Audibert}$^{1,2}$, \textsc{Lucas Chesnel}$^2$, \textsc{Houssem Haddar}$^{2}$\\[18pt]
\begin{minipage}{0.9\textwidth}
{\small
$^1$ Department PRISME, EDF R\&D, 6 quai Watier, 78401, Chatou CEDEX, France;\\[2pt]
$^2$ INRIA,  Ecole Polytechnique, CMAP, Route de Saclay, 91128 Palaiseau, France.\\[10pt]
E-mails: \scalebox{0.9}{\texttt{lorenzo.audibert@edf.fr}, \texttt{lucas.chesnel@inria.fr}, \texttt{houssem.haddar@inria.fr}}\\[-14pt]
\begin{center}
(\today)
\end{center}
}
\end{minipage}
\end{center}
\vspace{0.4cm}

\noindent\textbf{Abstract.} 
We use the inside-outside duality approach proposed by Kirsch-Lechleiter to identify transmission eigenvalues associated with artificial backgrounds. We prove that for well chosen artificial backgrounds, in particular for the ones with zero index of refraction at the inclusion location, one obtains a necessary and sufficient condition characterizing transmission eigenvalues via the spectrum of the modified far field operator. We also complement the existing literature with a convergence result for the invisible generalized incident field associated with the transmission eigenvalues.
\\
This work is based on several of the pioneering works of our dearest colleague and friend Armin Lechleiter and is dedicated to his memory. 
\\[6pt]
\noindent\textbf{Key words.} Factorization method, qualitative methods, transmission eigenvalues, inside-outside duality, artificial background.

\section{Introduction}

The present work is motivated by the problem of retrieving information on the material index $n$ of a penetrable inclusion embedded in a reference medium from far field data associated with incident plane waves. Some recent works have suggested the use of so-called Transmission Eigenvalues (TEs) as qualitative spectral signatures for $n$ \cite{use1,CaCH10,CaGH10,CoLe13,GiHa12,Harris_2014}). To do so, methods have been proposed to identify these quantities from  far field data (for a range of frequencies). The first class of methods exploit the failure of sampling methods at these special frequencies to reconstruct them  \cite{CaCH10,Audithesis,AuHa14}. They require some a priori knowledge on the inclusion location and size.  Another class of methods exploit the inside-outside duality  revealed in \cite{DoSm92,EcPi95,DEPSU95,EcPi97} that relate internal resonant frequencies to spectral properties of the far field operator. These methods have been introduced  to identify TEs in \cite{KiLe13} and further developed in many subsequent works \cite{LaVa15,LeRe15,LePe15,LePe15bis}. This type of approaches only exploit the data and do not need a priori knowledge on the shape or location. However, the mathematical justification only applies to some simplified asymptotic configurations of the material properties  (small perturbation of large enough or small enough constants).\\
\newline
Besides, one of the main troubles with classical TEs is that their link with the index of refraction is not explicit nor easily accessible. Some monotonicity results have been obtained in \cite{CaGH10} but only for some of the TEs (see also  \cite{CaCoHa16} for other results related to transmission eigenvalues).  Recently, the idea of using an artificial background for which the associated TEs have a more direct connection with the material index of the inclusion has been introduced \cite{CCMM16,AuCH17,Cogar_2017,AuCH18}. \\
\newline
The goal of  this article is to develop the inside-outside duality for identifying TEs associated with artificial backgrounds. We investigate the cases where the modification of the background consists in introducing an artificial inhomogeneity with index $n_b$ that contains the targeted inclusion. A relative far field operator is defined as the difference between the far field operator from data and the far field operator associated with the background. Scaling this difference with the adjoint of the scattering matrix for the artificial background, one obtains an operator that has symmetric factorisation in terms of Hergltoz wave operators (see \cite{NaPT07,GMMR12,LaLe16,GrHa18}). The spectrum of this operator is then exploited to identify TEs. In the case when $n_b$ is constant, one is faced with the same difficulties as in \cite{KiLe13} ($n_b =1$) for the theoretical justification. We here exploit that the artificial background is a numerical tool that can be adapted at wish to propose the use of $n_b$ scaling like $\rho/k^2$ where $\rho$ is an arbitrary constant and where $k$ is the wavenumber. For this particular choice, one is capable of obtaining a necessary and sufficient condition for the identification of TEs from the behaviour of the phases of the eigenvalues of the modified far field operator. The main reason allowing for such result is that the space of generalized incident waves is independent from the wavenumber for the chosen scaling. In addition to this characterization, we also prove that one can recover the generalized incident wave. This result holds for any continuous dependence of $n_b$ with respect to $k$ and may be useful for the inverse spectral problem consisting in determining $n$ from the TEs.\\ 
\newline
The outline of this article is as follows. First, we present the notation and we define the classical TEs. Then we explain how, modifying the far field data by some quantity which can be computed numerically, we can derive a new interior transmission eigenvalue problem for which the identification of $n$ from the corresponding TEs is more simple. In Section \ref{SectionFactorisation}, we write a factorisation of the new far field operator which has been obtained in \cite[Section 3]{LaLe16}. In Section \ref{sectionFirstCharac}, we give a characterisation of the TEs via an intermediate operator appearing in the factorization. Then we describe the spectral behaviour of the new far field operator, first outside TEs (Section \ref{SectionOutsideTE}), then at TEs (Section \ref{SectionSpectralAtTEs}). Section \ref{SectionNum} is dedicated to the presentation of simple numerical experiments illustrating the theoretical results. Finally we sketch the proofs of some well-known technical results needed during the analysis in the Appendix. The two main results of this article are Theorems \ref{TheoPhase1} and \ref{TheoPhase2}.

\section{Classical transmission eigenvalues}
We consider the propagation of waves in a material with a localized penetrable inhomogeneity in time harmonic regime. The localized inhomogeneity is modelled by some bounded open set $\Om\subset\R^3$ with Lipschitz boundary $\partial\Om$ and a refractive index $n\in \mL^{\infty}(\R^3)$. We assume that $\R^3\setminus\overline{\Om}$ is connected, that $n$ is positive real valued and that $n=1$ in $\R^3\setminus\Om$. Let $u_i$ be a smooth function satisfying the Helmholtz equation $\Delta u_i+k^2 u_i=0$ in $\R^3$ at the wavenumber $k>0$. The scattering of $u_i$ by the inclusion is described by the problem
\begin{equation}\label{PbChampTotalFreeSpaceIntrod}
\begin{array}{|rcll}
\multicolumn{4}{|l}{\mbox{Find }u=u_i+u_s\mbox{ such that}}\\[2pt]
\Delta u+k^2n\,u & = & 0 & \mbox{ in }\R^3,\\[3pt]
\multicolumn{4}{|c}{\dsp{\dsp\lim_{r\to +\infty}\int_{|x|=r}\left|\frac{\partial u_s}{\partial r}-ik u_s \right|^2 ds(x)  = 0.}}
\end{array}
\end{equation}
The last line of (\ref{PbChampTotalFreeSpaceIntrod}) is the so-called Sommerfeld radiation condition. For all $k>0$, problem (\ref{PbChampTotalFreeSpaceIntrod}) has a unique solution  $u \in \mH^2_{\loc}(\R^3)$ \cite{CoKr13}. Moreover the scattered field $u_{s}$ has the expansion 
\begin{equation}\label{scatteredFieldFreeSpaceIntro}
u_{s}(r\theta_{s})= \dsp \cfrac{ e^{i k r}}{r}\,\Big(\,u_s^{\infty}(\theta_{s})+O(1/r)\,\Big),
\end{equation}
as $r=|x|\to+\infty$, uniformly in $\theta_{s}\in \mathbb{S}^2:=\{\theta\in\R^3;\;|\theta|=1\}$. The function $u_s^{\infty}: \mathbb{S}^2\to\Cplx$ is called the farfield pattern of $u_s$. Denote $B_R$ the ball centered at $O$ of radius $R>0$. The far field pattern has the representation
\begin{equation}\label{RepresentationFF}
u_s^{\infty}(\theta_{s})=\cfrac{1}{4\pi}\dsp\int_{\partial B_R}u_s\,\partial_{\nu}(e^{-ik\theta_s\cdot x})-\partial_{\nu} u_s\,\,e^{-ik\theta_s\cdot x}\,ds(x)
\end{equation}
for all $R>0$ such that $\supp(n-1)\subset B_R$. Here and elsewhere, $\nu$ stands for the outer unit normal to $B_R$. When $u_i$ coincides with the incident plane wave $u_{i}(\cdot,\theta_i):=e^{i k \theta_{i}\cdot x}$, of direction of propagation $\theta_{i}\in\mathbb{S}^2$, we denote respectively $u_s(\cdot,\theta_i)$, $u^{\infty}_s(\cdot,\theta_i)$ the corresponding scattered field and far field pattern.  
From the farfield pattern associated with one incident plane wave, by linearity we can define the farfield operator $F:\mL^2(\mathbb{S}^2)\to\mL^2(\mathbb{S}^2)$ such that 
\begin{equation}\label{defInitialFFoperator}
(Fg)(\theta_{s})=\int_{\mathbb{S}^2}g(\theta_{i})\,u_s^{\infty}(\theta_{s},\theta_{i})\,ds(\theta_{i}).
\end{equation}
The function $Fg$ corresponds to the farfield pattern of $u_s$ defined in (\ref{PbChampTotalFreeSpaceIntrod}) with 
\begin{equation}\label{HerglotzWave}
u_i=\int_{\mathbb{S}^2}g(\theta_{i})e^{ik\theta_{i}\cdot x}\,ds(\theta_{i}).
\end{equation}
We call Herglotz wave functions the incident fields of the above form. It is known, see \cite{CoKr01}, that the set $\{\int_{\mathbb{S}^2}g(\theta_{i})e^{ik\theta_{i}\cdot x}\,ds(\theta_{i})|_{\Om},\,g\in\mL^2(\mathbb{S}^2)\}$ 
is dense in $\{\varphi\in\mL^2(\Om);\,\Delta\varphi+k^2\varphi=0\mbox{ in }\Om\}$ for the norm of $\mL^2(\Om)$. From $F$, let us define the scattering operator $S:\mL^2(\mathbb{S}^2)\to\mL^2(\mathbb{S}^2)$ such that 
\begin{equation}\label{DefScaOp}
S:=\mrm{Id}+\frac{ik}{2\pi} F.
\end{equation}
It is known (see e.g. \cite[Theorem 4.4]{KiGr08}) that $S$ is unitary ($SS^{\ast}=S^{\ast}S=\mrm{Id}$) and that $F$ is normal ($FF^{\ast}=F^{\ast}F$).\\
\newline
In this article, we want to work with Transmission Eigenvalues (TEs). The latter are defined as the values of $k>0$ for which there exists a non zero generalized incident field which does not scatter. In other words, they are defined as the values of $k>0$ for which there exists $u_i\in\mL^2(\Om)\setminus\{0\}$ satisfying $\Delta u_i+k^2u_i=0$ in $\Om$ and such that the corresponding $u_s$ defined via (\ref{PbChampTotalFreeSpaceIntrod}) has a zero far field pattern. By the Rellich Lemma, then we have $u_s=0$ in $\R^3\setminus\overline{\Om}$. This leads to the definition of TEs as the values of $k\in\R^{\ast}_{+}:=(0;+\infty)$ for which the problem  
\begin{equation}\label{OriginalITEP}
\begin{array}{|rcll}
\Delta u_s+k^2 nu_s&=& k^2(1-n) u_i&\mbox{ in }\Om \\[2pt]
\Delta u_i+k^2u_i&=&0&\mbox{ in }\Om 
\end{array}
\end{equation}
admits non trivial solutions $(u_i,u_s)\in\mL^2(\Om)\times\mH^2_0(\Om)$.\\
\newline
TEs depend on the probe medium, in particular on the material index. In this work, we want to obtain quantitative, or at least qualitative, information on $n$ via the TEs by, roughly speaking, solving an inverse spectral problem. To proceed, one has to be able to identify TEs from the knowledge of the far field data. 
After the first approach introduced in \cite{CaCH10,Audithesis}, a more refined technique consists in using the inside-outside duality presented in 
\cite{KiLe13}. From the theoretical point of view however, the complete justification of this method is limited to the identification of the first eigenvalue and to particular indices $n$ which are constant and large or small enough. On the other hand, the determination of information on $n$ from the knowledge of TEs is not simple because the dependence of the eigenvalues in (\ref{OriginalITEP}) with respect to $n$ is not clear. In order to circumvent these two problems, following the idea introduced in \cite{CCMM16,AuCH17,AuCH18}, we will modify the data by some well-chosen quantity which can be computed numerically, so that the corresponding TEs offer a more direct information on $n$. Moreover, for these TEs, at least for certain artificial backgrounds below, we will obtain a general criterion of detection from far field data using the inside-outside duality.

\section{Transmission eigenvalues with an artificial background}
\renewcommand{\Om}{\Omega_b}

Let $n_b\in \mL^\infty(\R^3)$ be an arbitrary given real valued function such that $n_b=1$ in $\R^3\setminus\overline{\Om}$ where $\Om\supset \Omega$ is a Lipschitz domain with connected complement. This parameter has to be seen as the material index of an artificial background. Consider the scattering problem 
\begin{equation}\label{PbChampTotalFreeSpaceComp}
\begin{array}{|rcll}
\multicolumn{4}{|l}{\mbox{Find }u_b=u_i+u_{b,s}\mbox{ such that}}\\[2pt]
\Delta u_b+k^2n_bu_b & = & 0 & \mbox{ in }\R^3,\\[3pt]
\multicolumn{4}{|c}{\dsp{\dsp\lim_{r\to +\infty}\int_{|x|=r}\left|\frac{\partial u_{b,s}}{\partial r}-ik u_{b,s} \right|^2 ds(x)  = 0.}}
\end{array}
\end{equation}
When $u_i$ coincides with the incident plane wave $u_{i}(\cdot,\theta_i)=e^{i k \theta_{i}\cdot x}$, of direction of propagation $\theta_{i}\in\mathbb{S}^2$, we denote respectively $u_{b,s}(\cdot,\theta_{i})$, $u^{\infty}_{b,s}(\cdot,\theta_{i})$ the corresponding scattered field and far field pattern (see the expansion in (\ref{scatteredFieldFreeSpaceIntro})). From the farfield pattern, we define by linearity the farfield operator $F^b:\mL^2(\mathbb{S}^2)\to\mL^2(\mathbb{S}^2)$ such that 
\[
(F^bg)(\theta_{s})=\int_{\mathbb{S}^2}g(\theta_{i})\,u^{\infty}_{b,s}(\theta_{s},\theta_{i})\,ds(\theta_{i}).
\]
Similarly to (\ref{DefScaOp}), we introduce the scattering operator $S^b:\mL^2(\mathbb{S}^2)\to\mL^2(\mathbb{S}^2)$ such that
\begin{equation}\label{DefScaOpBkd}
S^b:=\mrm{Id}+\frac{ik}{2\pi} F^b.
\end{equation}
The operator $S^b$ is unitary while $F^b$ is normal. Finally, we define the relative farfield operator $F^{\rel}:\mL^2(\mathbb{S}^2)\to\mL^2(\mathbb{S}^2)$ as
\begin{equation}\label{DefArtFFop}
F^{\rel} :=F-F^{b}.
\end{equation}
In practice, $F$ is obtained from the measurements while $F^{b}$ has be to computed by numerically solving \eqref{PbChampTotalFreeSpaceComp} for a given $n_b$.   We remark that when $n_b\equiv1$, then $F^b\equiv0$. In this case, we simply have $F^{\rel} =F$.\\
\newline
The TEs for the artificial background are defined as the values of $k>0$ for which there exists $u_i\in\mL^2(\Om)\setminus\{0\}$ satisfying $\Delta u_i+k^2u_i=0$ in $\Om$ and such that the corresponding $u_s$, $u_{b,s}$ defined via (\ref{PbChampTotalFreeSpaceIntrod}), (\ref{PbChampTotalFreeSpaceComp}) are such that $u_s^{\infty}=u_{b,s}^{\infty}$. By the Rellich Lemma, this implies $u_s=u_{b,s}$ in $\R^3\setminus\overline{\Om}$. Thus $u=u_i+u_s$ must coincide with $u_b:=u_i+u_{b,s}$ in $\R^3\setminus\overline{\Om}$. Set $w:=(u-u_b)|_{\Om}\in\mH^2_0(\Om)$ and $v:=u_b{}|_{\Om}\in\mL^2(\Om)$. Computing the difference \eqref{PbChampTotalFreeSpaceIntrod}-\eqref{PbChampTotalFreeSpaceComp}, we find that $w$ satisfies $\Delta w+k^2nw=k^2(n_b-n)v$ in $\Om$. This leads us to define the TEs for the artificial background as the values of $k>0$ for which the problem  
\begin{equation}\label{NewITEP}
\begin{array}{|rcll}
\Delta w+k^2nw&=& k^2(n_b-n)v&\mbox{ in }\Om \\[2pt]
\Delta v+k^2n_bv&=&0&\mbox{ in }\Om 
\end{array}
\end{equation}
admits non trivial solutions $(v,w)\in\mL^2(\Om)\times\mH^2_0(\Om)$. Note that classical arguments (see e.g. \cite{CaHa13b}) allow one to show that if we have $n-n_b \ge \alpha >0$ in $\Om$ or $n_b- n \ge \alpha >0$ in $\Om$ for some constant $\alpha$, then the set of TEs of (\ref{NewITEP}) is discrete. Now, all the game consists in choosing $n_b$ cleverly so that the knowledge of the eigenvalues of (\ref{NewITEP}) give direct or interesting information on $n$. Remark that if one takes $n_b=0$ in $\Om$ (and $n_b=1$ in $\R^3\setminus\overline{\Om})$, one finds that \eqref{NewITEP} has a non zero solution if and only if the problem
\begin{equation}\label{NewITEPZIM}
\Delta(n^{-1}\Delta w) = -k^2 \Delta w \quad\mbox{ in }\Om 
\end{equation}
admits a non zero solution $w\in\mH^2_0(\Om)$. In \cite{AuCH18}, this situation is referred to as the Zero Index Material (ZIM) because $n_b=0$ in $\Om$. In opposition with problem (\ref{OriginalITEP}), one obtains a quite simple eigenvalue problem similar to the so-called plate buckling eigenvalue problem. Classical results concerning linear self-adjoint compact operators guarantee that the spectrum of (\ref{NewITEPZIM}) is made of real positive isolated eigenvalues of finite multiplicity $0<\lambda_1 \le \lambda_2 \le \dots \le \lambda_p \le\dots$ (the numbering is chosen so that each eigenvalue is repeated according to its multiplicity). Moreover, there holds $\lim_{p\to+\infty}\lambda_p=+\infty$ and we have the \textit{min-max} formulas
\begin{equation}\label{FormuleMinMax}
\lambda_p=\min_{E_p\in\mathscr{E}_p}\max_{w\in E_p\setminus\{0\}}\cfrac{(n^{-1}\Delta w,\Delta w)_{\mL^2(\Om)}}{\|\nabla w\|^2_{\mL^2(\Om)}}.
\end{equation}
Here $\mathscr{E}_p$ denotes the sets of subspaces $E_p$ of $\mH^2_0(\Om)$ of dimension $p$. Observe that the characterisation of the spectrum of problem (\ref{NewITEPZIM}) is much simpler than the one of problem (\ref{OriginalITEP}). Moreover, it holds under very general assumptions for $n$: we just require that $n|_{\Om}\in\mL^{\infty}(\Om)$ with $\mrm{ess}\inf_{\Om}n>0$.  In particular, $n$ can be equal to one inside $\Om$ (we do not need to know exactly the support $\overline{\Omega}$ of the defect in the reference medium, $\Om$ can be larger) and $n-1$ can change sign on the boundary. In comparison, the analysis of the spectrum of (\ref{OriginalITEP}) in such situations is much more complex and the functional framework must be adapted according to the values of $n$ (see \cite{Ches16} in the case where $n-1$ changes sign on $\partial\Om$). From (\ref{FormuleMinMax}), if we denote $\tilde{\lambda}_p$ the eigenvalues of (\ref{NewITEPZIM}) for another index $\tilde{n}\in\mL^{\infty}(\Om)$, we find that there holds, for all $p\in\N^{\ast}$, 
\[
\lambda_p\le \tilde{\lambda}_p\qquad\mbox{ when we have }\qquad n \ge \tilde{n}\qquad\mbox{a.e. in }\Om.
\]
The second advantage of considering problem (\ref{NewITEPZIM}) instead of problem (\ref{OriginalITEP}) is that the spectrum of (\ref{NewITEPZIM}) is entirely real. As a consequence, no information on $n$ is lost in complex eigenvalues which can exist a priori for the usual transmission problem \eqref{OriginalITEP} and which are hard to detect from the knowledge of $F$ for real frequencies. \\
\newline
Our goal in the following is to explain and to justify how to characterize the eigenvalues and the $v$ in (\ref{NewITEP}) from the knowledge of $F^{\rel}$ using the inside-outside duality approach. More specifically, we shall use the modified operator $\mathscr{F}$ defined as $\mathscr{F}:=(S^b)^{\ast}F^{\rel}$  for which it is well known that a symmetric factorization holds. We prove that this operator allows one to identify TEs of (\ref{NewITEP}) for the special choice of 
\begin{equation}
\label{choixdenb}
n_b = n_b(k) = \rho /k^2 \quad \mbox{ in } \Om,
\end{equation}
where $\rho \in \R$ is a constant independent from $k$. The particularity of this choice is that then the incident field $v$ in (\ref{NewITEP}) satisfies an equation independent from $k$ in $\Om$. As a consequence, a min-max criterion for TEs is derived on a Hilbert space independent from $k$ and a necessary and sufficient condition can be easily obtained.

\section{Factorisation of the modified farfield operator}\label{SectionFactorisation}

In order to develop the inside-outside duality, we first establish a (well known) symmetric factorisation of the operator $\mathscr{F}$ (see \cite{NaPT07,GMMR12,LaLe16,GrHa18}). We detail the procedure for the sake of clarity, following the approach in \cite{LaLe16}.\\
\newline
\textbf{Step 1.} For $g\in\mL^2(\mathbb{S}^2)$, define the function $v_g$ such that 
\begin{equation}\label{vartg}
v_g (x) :=\int_{\mathbb{S}^2}g(\theta_{i}){u}_b(x,\theta_{i}) \,ds(\theta_{i})
\end{equation}
where ${u}_b(\cdot,\theta_{i})$ appears after (\ref{PbChampTotalFreeSpaceComp}). Then define the operator $H: \mL^2(\mathbb{S}^2) \to \mL^2(\Om)$ such that 
\begin{equation}\label{defHart}
H g = v_g|_{\Om}.
\end{equation}
From the definitions of $F$ and $F^b$ we observe that $F^{\rel}g=w^{\infty}$ where $w^{\infty}$ is the far field pattern associated with the solution of the problem
\begin{equation}\label{eqwart}
\left|\begin{array}{l}
\mbox{Find }w \in \mH^2_{\loc}(\R^3)\mbox{ such that}\\[2pt]
\Delta w+k^2n\,w  = k^2(n_b-n) v   \mbox{ in }\R^3,\\[5pt]
{\dsp\lim_{r\to +\infty}\int_{|x|=r}\left|\frac{\partial w}{\partial r}-ik w \right|^2 ds(x)  = 0,}
\end{array}
\right.
\end{equation}
with $v=v_g$. More generally, define the operator $G:\mL^2(\Om)\to\mL^2(\mathbb{S}^2)$ such that $Gv=w^{\infty}$, $w^{\infty}$ being the far field pattern of the function $w$ solving (\ref{eqwart}). With such notation, we can factorize $F^{\rel}$ as 
\begin{equation}\label{factoFGHart}
F^{\rel}=G\herg.
\end{equation}
The properties of problem (\ref{eqwart}) will play a key role in the analysis below. Before proceeding, we state a classical result whose proof is sketched in the Appendix.
\begin{proposition}\label{PropoEstimRegu}
For all $k\in\R^{\ast}_{+}$, the solution of (\ref{eqwart}) satisfies the estimate
\begin{equation}\label{EstimRegu}
\|w\|_{\mH^2(\Om)} \le C\,\|v\|_{\mL^2(\Om)},
\end{equation}
where the constant $C$ is independent from $v\in\mL^2(\Om)$. Moreover, for any compact set $I\subset\R^{\ast}_{+}$, the constant in (\ref{EstimRegu}) can be chosen independent from $k\in I$. 
\end{proposition}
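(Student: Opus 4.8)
The plan is to establish the a priori estimate in two stages: first a qualitative well-posedness/uniqueness argument for fixed $k$, giving the bound \eqref{EstimRegu} with a constant $C = C(k)$, then a compactness-contradiction argument to make $C$ uniform on a compact interval $I \subset \R^*_+$. For the first stage I would observe that \eqref{eqwart} is the scattering problem for the inhomogeneous Helmholtz operator $\Delta + k^2 n$ with source $f := k^2(n_b - n)v \in \mL^2(\R^3)$ supported in $\overline{\Om}$. Since $n = n_b = 1$ outside $\Om$, this is a standard Helmholtz scattering problem; using the Lippmann-Schwinger integral equation (or the limiting absorption principle / Fredholm theory as in \cite{CoKr13}), problem \eqref{eqwart} is uniquely solvable in $\mH^2_{\loc}(\R^3)$ — uniqueness coming from Rellich's lemma and unique continuation for $\Delta + k^2 n$ — and one obtains $\|w\|_{\mH^2(B_R)} \le C(k)\,\|f\|_{\mL^2(B_R)} \le C(k)\,\|v\|_{\mL^2(\Om)}$ for any fixed ball $B_R \supset \overline{\Om}$, in particular $\|w\|_{\mH^2(\Om)} \le C(k)\,\|v\|_{\mL^2(\Om)}$. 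Interior $\mH^2$ elliptic regularity is automatic here since $n \in \mL^\infty$ and the source is in $\mL^2$.

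For the second stage I would argue by contradiction: suppose no uniform constant exists on the compact set $I$, so there are sequences $k_m \in I$, $v_m \in \mL^2(\Om)$ with $\|v_m\|_{\mL^2(\Om)} = 1$ and corresponding solutions $w_m$ of \eqref{eqwart} with $\|w_m\|_{\mH^2(\Om)} \to \infty$. Normalize by setting $\tilde w_m := w_m / \|w_m\|_{\mH^2(\Om)}$ and $\tilde v_m := v_m / \|w_m\|_{\mH^2(\Om)} \to 0$ in $\mL^2(\Om)$; then $\tilde w_m$ solves \eqref{eqwart} with right-hand side $k_m^2(n_b - n)\tilde v_m \to 0$ in $\mL^2$, satisfies $\|\tilde w_m\|_{\mH^2(\Om)} = 1$, and $k_m \to k_* \in I$ along a subsequence. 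Working on a fixed bounded domain (truncating the exterior via a Dirichlet-to-Neumann map on $\partial B_R$, which depends continuously on $k$), one extracts a weakly convergent subsequence $\tilde w_m \rightharpoonup \tilde w$ in $\mH^2(B_R)$, hence strongly in $\mL^2$ by Rellich-Kondrachov; passing to the limit in the variational formulation, $\tilde w$ solves the homogeneous problem \eqref{eqwart} at $k = k_*$ with zero source, so by the uniqueness from Stage 1, $\tilde w = 0$. On the other hand the strong $\mL^2$ convergence together with the equation $\Delta \tilde w_m = -k_m^2 n \tilde w_m + k_m^2(n_b-n)\tilde v_m$ forces $\Delta \tilde w_m \to 0$ in $\mL^2$, and then elliptic regularity upgrades weak $\mH^2$ convergence to strong $\mH^2$ convergence, contradicting $\|\tilde w_m\|_{\mH^2(\Om)} = 1$. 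This yields the uniform constant.

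The main obstacle, and the only place requiring genuine care, is the uniform handling of the radiation condition in the compactness argument: one must truncate the unbounded domain correctly so that the exterior Dirichlet-to-Neumann operator $\Lambda_k$ on $\partial B_R$ enters the variational formulation, verify that $k \mapsto \Lambda_k$ is continuous in the appropriate operator topology so the limit problem is again the physically correct (outgoing) problem at $k_*$, and confirm that no energy escapes to infinity — i.e. that the limit $\tilde w$ is genuinely a radiating solution and not merely a solution in $\mH^2_{\loc}$ without the radiation condition. Since all this is classical (it is exactly the standard proof that the solution operator of a Helmholtz scattering problem is bounded locally uniformly in $k$), I would only sketch it in the Appendix and refer to \cite{CoKr13}; the interior $\mH^2$-regularity bound, the Lippmann-Schwinger unique solvability, and the Rellich/unique-continuation uniqueness are all off-the-shelf.
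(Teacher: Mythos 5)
Your proposal is correct in substance, but it obtains the uniformity in $k$ by a different mechanism than the paper. Both arguments truncate to a ball $B_R\supset\overline{\Om}$ and reformulate \eqref{eqwart} variationally with the exterior Dirichlet-to-Neumann operator $\Lambda(k)$ on $\partial B_R$; the paper, however, does not argue by contradiction. It writes the truncated problem as $\mathcal{T}(k)w(k)=k^2F(k)$ with $\mathcal{T}(k):\mH^1(B_R)\to\mH^1(B_R)$ an isomorphism (Fredholm plus Rellich), proves that $k\mapsto\Lambda(k)$ and hence $k\mapsto\mathcal{T}(k)$ are continuous in operator norm, and deduces via Neumann series that $k\mapsto\mathcal{T}(k)^{-1}$ is continuous, hence uniformly bounded on compact subsets of $\R^{\ast}_{+}$; interior elliptic regularity then upgrades the resulting uniform $\mH^1(B_R)$ bound to the $\mH^2(\Om)$ bound. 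This route is slightly more economical here because the operator-norm continuity of $k\mapsto\mathcal{T}(k)^{-1}$ is exactly the ingredient needed again for Proposition \ref{ContinuousDependancek}, so the paper gets both results from one computation. Your compactness-contradiction argument is the classical alternative and proves the same locally uniform bound; it is marginally more robust (it does not require identifying the explicit perturbation structure of $\mathcal{T}(k)$), at the cost of being nonconstructive.

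One technical wrinkle in your Stage 2: you normalize by $\|w_m\|_{\mH^2(\Om)}$ and then extract a weakly convergent subsequence in $\mH^2(B_R)$, but boundedness of $\tilde w_m$ in $\mH^2(B_R)$ (or even $\mH^1(B_R)$) does not follow directly from $\|\tilde w_m\|_{\mH^2(\Om)}=1$, since $\Om$ is a proper subset of $B_R$. The clean fix is to run the contradiction on the quantity $\|w\|_{\mH^1(B_R)}$ instead: a uniform bound $\|w\|_{\mH^1(B_R)}\le C\|v\|_{\mL^2(\Om)}$ yields \eqref{EstimRegu} a posteriori by interior elliptic regularity (with a constant controlled by $\sup_I k$, $\|n\|_{\mL^\infty}$, $\|n_b\|_{\mL^\infty}$ and $\mrm{dist}(\Om,\partial B_R)$), and the normalized sequence is then genuinely bounded in $\mH^1(B_R)$, so Rellich--Kondrachov and the Gårding-type estimate built into the $\Lambda(k)$ formulation give the strong convergence needed for the contradiction. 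With that adjustment the argument is complete.
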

\noindent \textbf{Step 2.} We express the adjoint of $\herg$ in terms of far fields. To proceed, for a given $f\in\mL^2(\Om)$ (extended by zero outside of $\Om$), we work with $\psi\in\mH^2_{\loc}(\R^3)$ solving the problem
\[
\Delta\psi +k^2 n_b \psi =-f\quad\mbox{ in }\R^3
\]
together with the Sommerfeld radiation condition. Integrating by parts in $B_R$ for $R>0$ large enough, using that $\Delta v_g+k^2n_bv_g=0$ in $\R^3$ and replacing $v_g$ by its expression given in (\ref{vartg}), we find, where $(\cdot, \cdot)_{\Om}$ denotes the $\mL^2(\Om)$ scalar product,  
\begin{equation}\label{IPPMul}
\begin{array}{lcl}
(f,\herg g)_{\Om} & = & -\dsp\int_{B_R}(\Delta\psi+k^2n_b\psi)\,\overline{v_g}\,dx \\[9pt]
&=&  \dsp\int_{B_R}\nabla\psi\cdot\nabla\overline{v_g}-k^2n_b\psi\,\overline{v_g}\,dx -\dsp\int_{\partial B_R}\partial_{\nu} \psi\,\,\overline{v_g}\,ds(x)\\[9pt]
&=& \dsp\int_{\partial B_R}\psi\,\partial_{\nu}\int_{\mathbb{S}^2}\overline{g(\theta_{i})}(e^{-ik\theta_i\cdot x}+\overline{u_{b,s}(x,\theta_{i})}) \,ds(\theta_{i})\,ds(x)\\[10pt]
&& \hspace{0cm}-\dsp\int_{\partial B_R}\partial_{\nu} \psi\,\,\dsp\int_{\mathbb{S}^2}\overline{g(\theta_{i})}(e^{-ik\theta_i\cdot x}+\overline{u_{b,s}(x,\theta_{i})}) \,ds(\theta_{i})\,ds(x).
\end{array}
\end{equation}
On the other hand, formula (\ref{RepresentationFF}) gives, for $\theta_s\in\mathbb{S}^2$, 
\[
\psi^{\infty}(\theta_s)=\cfrac{1}{4\pi}\dsp\int_{\partial B_R}\psi\,\partial_{\nu}(e^{-ik\theta_s\cdot x})-\partial_{\nu} \psi\,\,e^{-ik\theta_s\cdot x}\,ds(x),
\]
when $R>0$ is such that $\supp(n_b-1)\subset B_R$. From (\ref{IPPMul}), taking the limit as $R\to+\infty$ and using the radiation condition to deal with the term involving $u_{b,s}$, we get 
\begin{equation}\label{defHstar}
\begin{array}{lcl}
(f,\herg g)_{\Om}&=&4\pi(\psi^{\infty},g)_{\mathbb{S}^2}+2ik(\psi^{\infty},F^bg)_{\mathbb{S}^2}\\[4pt]
&=&4\pi((\mrm{Id}-\cfrac{ik}{2\pi}(F^b)^{\ast})\psi^{\infty},g)_{\mathbb{S}^2} \ = \ 4\pi((S^b)^{\ast}\psi^{\infty},g)_{\mathbb{S}^2}.
\end{array}
\end{equation}
This implies $\herg^{\ast}f=4\pi(S^b)^{\ast}\psi^{\infty}$ and so $S^b\herg^{\ast}f=4\pi \psi^{\infty}$.\\
\newline
\textbf{Step 3.} Observing that the first line of (\ref{eqwart}) can also be written as 
\begin{equation}\label{Rewriting}
\Delta w+k^2n_b\,w  = -f\qquad\mbox{ with }\quad f=k^2(n-n_b) (v_g+w),
\end{equation}
we define the operator  $T:\mL^2(\Om) \to \mL^2(\Om)$ such that 
\begin{equation}\label{defTtC}
T v = \frac{1}{4\pi}  k^2(n - n_b) (v + w),
\end{equation}
where $w \in \mH^2_{\loc}(\R^3)$ is the unique solution of (\ref{eqwart}). Then clearly we have $G=S^b\herg^{\ast}T$ which implies the factorisation
\[
F^{\rel}=S^b\herg^{\ast}T\herg.
\]
\textbf{Step 4.} Finally, we define the operators $\mathscr{F}:\mL^2(\mathbb{S}^2) \to \mL^2(\mathbb{S}^2)$ and $\mathscr{S}:\mL^2(\mathbb{S}^2) \to \mL^2(\mathbb{S}^2)$ such that 
\begin{equation}\label{ModifiedOp}
\mathscr{F}:=(S^b)^{\ast}F^{\rel}\qquad\mbox{ and }\qquad \mathscr{S}:=\mrm{Id}+\frac{ik}{2\pi}\mathscr{F}.
\end{equation}
We have $\mathscr{F}=(S^b)^{\ast}(F-F^b)=2\pi(ik)^{-1}(S^b)^{\ast}(S-S^b)=2\pi(ik)^{-1}((S^b)^{\ast}S-\mrm{Id})$ and so
\[
\mathscr{S}=(S^b)^{\ast}S.
\]
Since $S$ and $S^b$ are unitary, we deduce that $\mathscr{S}$ is unitary (and so normal). And one can verify that this is enough to guarantee that $\mathscr{F}$ is normal. We gather these results in the following statement.
\begin{proposition}\label{PropFunitary}
$i)$ The operator $\mathscr{F}:\mL^2(\mathbb{S}^2) \to \mL^2(\mathbb{S}^2)$ admits the factorisation
\begin{equation}\label{ModifiedFacto}
\mathscr{F}=\herg^{\ast}T\herg,
\end{equation}
where $\herg:\mL^2(\mathbb{S}^2) \to \mL^2(\Om)$ and $T:\mL^2(\Om) \to \mL^2(\Om)$ are respectively defined in (\ref{defHart}) and (\ref{defTtC}).\\[4pt]
$ii)$ The operator $\mathscr{F}$ is normal and $\mathscr{S}$ is unitary.
\end{proposition}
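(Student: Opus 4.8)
The plan is to assemble part $i)$ directly from the four construction steps preceding the statement, and then to deduce part $ii)$ from the unitarity of $S$ and $S^b$. For the factorisation, Step~1 already gives $F^{\rel}=G\herg$; Step~2 identifies $S^b\herg^{\ast}$ as $4\pi$ times the far-field map applied to the source term of the exterior problem; and Step~3 rewrites the first equation of \eqref{eqwart} as $\Delta w+k^2n_b w=-f$ with $f=k^2(n-n_b)(v_g+w)$, so that $G=S^b\herg^{\ast}T$ with $T$ as in \eqref{defTtC}. Composing, $F^{\rel}=S^b\herg^{\ast}T\herg$, and since $S^b$ is unitary, applying $(S^b)^{\ast}$ on the left yields $\mathscr{F}=(S^b)^{\ast}F^{\rel}=\herg^{\ast}T\herg$, which is \eqref{ModifiedFacto}. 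The one point that deserves care is to check that $T$ (hence $G$) is a well-defined \emph{bounded} operator on all of $\mL^2(\Om)$, not merely on the range of $\herg$: this is exactly what Proposition~\ref{PropoEstimRegu} provides, since \eqref{eqwart} has a unique solution $w\in\mH^2_{\loc}(\R^3)$ depending continuously on $v\in\mL^2(\Om)$.

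For part $ii)$, the key identity is $\mathscr{S}=(S^b)^{\ast}S$. To get it, substitute $F=2\pi(ik)^{-1}(S-\mrm{Id})$ and $F^b=2\pi(ik)^{-1}(S^b-\mrm{Id})$ into $\mathscr{F}=(S^b)^{\ast}(F-F^b)$, which gives $\mathscr{F}=2\pi(ik)^{-1}((S^b)^{\ast}S-\mrm{Id})$; then the $\mrm{Id}$ terms cancel in $\mathscr{S}=\mrm{Id}+(ik/2\pi)\mathscr{F}$ and one is left with $\mathscr{S}=(S^b)^{\ast}S$. Since $S$ and $S^b$ are unitary (recalled in the text from \cite[Theorem 4.4]{KiGr08}) and a composition of unitary operators is unitary, $\mathscr{S}$ is unitary, hence in particular normal.

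It remains to pass from the normality of $\mathscr{S}$ to that of $\mathscr{F}$. Writing $\mathscr{F}=(2\pi/(ik))(\mathscr{S}-\mrm{Id})$ and using $\overline{2\pi/(ik)}=-2\pi/(ik)$, we get $\mathscr{F}^{\ast}=-(2\pi/(ik))(\mathscr{S}^{\ast}-\mrm{Id})$. A direct computation using $\mathscr{S}\mathscr{S}^{\ast}=\mathscr{S}^{\ast}\mathscr{S}=\mrm{Id}$ then shows that both $\mathscr{F}\mathscr{F}^{\ast}$ and $\mathscr{F}^{\ast}\mathscr{F}$ equal $-(2\pi/(ik))^2(2\,\mrm{Id}-\mathscr{S}-\mathscr{S}^{\ast})$, so $\mathscr{F}$ is normal. (Equivalently, and avoiding the constant bookkeeping: any operator of the form $c(U-\mrm{Id})$ with $U$ unitary and $c\in\Cplx$ is automatically normal, being a polynomial in the single normal operator $U$, and such a polynomial commutes with its adjoint $\bar c(U^{\ast}-\mrm{Id})$.) There is no genuine obstacle in the argument; the only substantive input is the well-posedness estimate of Proposition~\ref{PropoEstimRegu} needed to legitimise the factorisation, the rest being bookkeeping with the scattering–far-field correspondence and the unitarity of $S$, $S^b$.
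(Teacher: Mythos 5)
Your proof is correct and follows essentially the same route as the paper: part $i)$ is assembled from the four construction steps (the factorisation $F^{\rel}=S^b\herg^{\ast}T\herg$ followed by left-multiplication by $(S^b)^{\ast}$), and part $ii)$ rests on the identity $\mathscr{S}=(S^b)^{\ast}S$ and the unitarity of $S$ and $S^b$. The only difference is that you explicitly carry out the computation showing that normality of $\mathscr{S}$ implies normality of $\mathscr{F}$, which the paper leaves as ``one can verify''; your computation (and the cleaner remark that $c(U-\mrm{Id})$ is a polynomial in the normal operator $U$) is correct.
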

\begin{remark}
When we take $n_b=1$ in $\R^3$, we have $F^b=0$ and so $S^b=\mrm{Id}$. In this case, (\ref{ModifiedFacto}) is nothing but the classical factorisation of the operator $F$ (see e.g. \cite{KiGr08}). 
\end{remark}
\noindent In the next proposition, we detail some properties of the operators appearing in the factorization (\ref{ModifiedFacto}) which will be useful in the analysis below.
\begin{proposition}\label{propoCompact}
$i)$ $\herg:\mL^2(\mathbb{S}^2)\to\mL^2(\Om)$ is compact injective and $\mathscr{F}:\mL^2(\mathbb{S}^2)\to\mL^2(\mathbb{S}^2)$ is compact.\\[4pt]
$ii)$ The operator $T$ satisfies the energy identity
\begin{equation}\label{identity1}
\forall v\in\mL^2(\Om), \qquad 4\pi\,\Im m\, (T v, v)_{\Om} =  k \int_{\mathbb{S}^{2}} |w^{\infty}|^2\,ds,
\end{equation}
where $w^{\infty}$ is the far field pattern of the solution of (\ref{eqwart}).\\[4pt]
$iii)$ Assume that we have $n-n_b \ge \alpha >0$ in $\Om$ or $n_b- n \ge \alpha >0$ in $\Om$ for some constant $\alpha$. Then $T$ admits the decomposition 
\begin{equation}\label{FredholmDecompo}
T=A+K,
\end{equation}
where $K:\mL^2(\Om)\to\mL^2(\Om)$ is compact and where $A:\mL^2(\Om)\to\mL^2(\Om)$ is an isomorphism such that 
\[
\exists c>0,\,\forall v\in\mL^2(\Om),\qquad|(Av,v)_{\Om}| \ge c\,\|v\|^2_{\mL^2(\Om)}\qquad\mbox{($A$ is coercive)}.
\]
\end{proposition}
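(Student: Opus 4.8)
The plan is to establish the three items separately, each one reducing to the forward well-posedness estimate \eqref{EstimRegu} together with Rellich's compact embedding, the Sommerfeld radiation condition and a unique continuation argument. For item $i)$, I would split $v_g=v_i^g+v_{s}^g$ as in \eqref{vartg}, with $v_i^g$ the Herglotz wave of kernel $g$ and $v_{s}^g(x)=\int_{\mathbb{S}^2}g(\theta_i)u_{b,s}(x,\theta_i)\,ds(\theta_i)$. Fixing a bounded open set $\mathcal O$ with $\overline{\Om}\subset\mathcal O$, differentiation under the integral sign shows that $g\mapsto v_i^g$ is bounded from $\mL^2(\mathbb{S}^2)$ into $\mH^2(\mathcal O)$, and the well-posedness of \eqref{PbChampTotalFreeSpaceComp} together with elliptic regularity yields $\|u_{b,s}(\cdot,\theta_i)\|_{\mH^2(\mathcal O)}\le C$ uniformly in $\theta_i\in\mathbb{S}^2$, hence $g\mapsto v_{s}^g$ is bounded from $\mL^2(\mathbb{S}^2)$ into $\mH^2(\mathcal O)$ as well. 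Thus $\herg$ is bounded from $\mL^2(\mathbb{S}^2)$ into $\mH^2(\Om)$, and so compact from $\mL^2(\mathbb{S}^2)$ into $\mL^2(\Om)$ by Rellich; since $T$ is bounded on $\mL^2(\Om)$ (read off \eqref{defTtC}, using $n-n_b\in\mL^\infty(\R^3)$ and \eqref{EstimRegu}), the factorisation \eqref{ModifiedFacto} makes $\mathscr{F}$ compact. For injectivity, if $\herg g=0$ then $v_g=0$ in $\Om$; as $v_g\in\mH^2_{\loc}(\R^3)$ solves $\Delta v_g+k^2n_bv_g=0$ in $\R^3$, unique continuation for this equation with bounded coefficient forces $v_g\equiv0$ in $\R^3$, hence $v_i^g=-v_{s}^g$ in $\R^3\setminus\overline{\Om}$. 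But $v_{s}^g$ is a radiating solution of the Helmholtz equation there, whereas a non-zero Herglotz wave never satisfies the Sommerfeld condition (indeed $\Im m\int_{\partial B_R}\partial_\nu v_i^g\,\overline{v_i^g}\,ds=0$ for every $R$, while it would tend to a strictly positive multiple of $\|v_i^{g,\infty}\|^2_{\mL^2(\mathbb{S}^2)}$ were $v_i^g$ radiating and non-trivial); hence $v_i^g\equiv0$ and $g=0$ by injectivity of the Herglotz operator.

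For item $ii)$, I would multiply the first equation of \eqref{eqwart} by $\overline{w}$, integrate over a large ball $B_R$, integrate by parts and take imaginary parts: since $n$ is real, every volume contribution vanishes, and letting $R\to+\infty$ with the radiation condition and the far field expansion of $w$ (cf.\ \eqref{scatteredFieldFreeSpaceIntro}) leaves $k\int_{\mathbb{S}^2}|w^{\infty}|^2\,ds=\Im m\int_{\Om}k^2(n_b-n)v\,\overline{w}\,dx$. It then remains to observe, from \eqref{defTtC}, that $4\pi\,\Im m\,(Tv,v)_{\Om}$ equals precisely this quantity, which holds because $\Im m\int_{\Om}(n-n_b)|v|^2\,dx=0$ and $\Im m\int_{\Om}(n-n_b)w\,\overline{v}\,dx=-\Im m\int_{\Om}(n-n_b)v\,\overline{w}\,dx$, both by the reality of $n$ and $n_b$. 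This gives \eqref{identity1}.

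For item $iii)$, I would set $Av:=\tfrac{k^2}{4\pi}(n-n_b)v$ and $Kv:=\tfrac{k^2}{4\pi}(n-n_b)w$, where $w$ solves \eqref{eqwart} with right-hand side governed by $v$, so that $T=A+K$ by \eqref{defTtC}. The map $v\mapsto w|_{\Om}$ is bounded from $\mL^2(\Om)$ into $\mH^2(\Om)$ by \eqref{EstimRegu}, hence compact from $\mL^2(\Om)$ into $\mL^2(\Om)$ by Rellich, and composing with multiplication by the $\mL^\infty$ function $\tfrac{k^2}{4\pi}(n-n_b)$ preserves compactness, so $K$ is compact. Under either hypothesis $n-n_b\ge\alpha>0$ or $n_b-n\ge\alpha>0$ in $\Om$, the coefficient $\tfrac{k^2}{4\pi}(n-n_b)$ is bounded, bounded away from zero and of constant sign in $\Om$; therefore $A$ is an isomorphism (its inverse being multiplication by $\tfrac{4\pi}{k^2(n-n_b)}$), and $|(Av,v)_{\Om}|=\tfrac{k^2}{4\pi}\bigl|\int_{\Om}(n-n_b)|v|^2\,dx\bigr|\ge\tfrac{k^2\alpha}{4\pi}\|v\|^2_{\mL^2(\Om)}$, i.e.\ $A$ is coercive.

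I expect the only genuinely delicate step to be the injectivity of $\herg$ in item $i)$, since it relies on a unique continuation principle for $\Delta+k^2n_b$ with a merely $\mL^\infty$ coefficient $n_b$ (smoothness of the index being unavailable) and on the elementary but slightly technical fact that a non-trivial Herglotz wave cannot be radiating. The remaining assertions are routine once \eqref{EstimRegu} and the Rellich embedding are at hand, together with the reality of $n$ and $n_b$.
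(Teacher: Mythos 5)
Your proof is correct and follows essentially the same route as the paper: compactness of $\herg$ via the $\mH^2$ smoothing of the Herglotz/scattered fields plus Rellich, the energy identity by multiplying \eqref{eqwart} by $\overline{w}$ and passing to the limit with the radiation condition, and the decomposition $T=A+K$ with $A$ the sign-definite multiplication operator $\tfrac{k^2}{4\pi}(n-n_b)$ and $K$ the compact remainder $v\mapsto\tfrac{k^2}{4\pi}(n-n_b)w$. The only place you go beyond the paper is the injectivity of $\herg$ (unique continuation for $\Delta+k^2n_b$ plus the fact that a non-trivial Herglotz wave is not radiating), which the paper simply delegates to the classical reference; your argument there is the standard one and is sound.
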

\begin{proof}
$i)$ We have $\herg g=(u_i+u_{b,s})|_{\Om}$ where $u_{b,s}$ is the scattered field of the solution of (\ref{PbChampTotalFreeSpaceComp}) with $u_i=\int_{\mathbb{S}^2}g(\theta_{i})e^{ik\theta_{i}\cdot x}\,ds(\theta_{i})|_{\Om}$. Since $g\mapsto \int_{\mathbb{S}^2}g(\theta_{i})e^{ik\theta_{i}\cdot x}\,ds(\theta_{i})|_{\Om}$ is compact from $\mL^2(\mathbb{S}^2)$ to $\mL^2(\Om)$ and since $u_i\mapsto u_{b,s}$ is continuous from $\mL^2(\Om)\to\mH^2(\Om)$ (similar to Proposition \ref{PropoEstimRegu}), we deduce that $\herg:\mL^2(\mathbb{S}^2)\to\mL^2(\Om)$ is compact. Injectivity of $\herg$ can be proved as for the classical Herglotz operator (see e.g. \cite{CoKr13}). Finally, since $\mathscr{F}=\herg^{\ast}T\herg$ and since $T:\mL^2(\Om)\to\mL^2(\Om)$ is continuous (consequence of Proposition \ref{EstimRegu}), we deduce that $\mathscr{F}:\mL^2(\mathbb{S}^2)\to\mL^2(\mathbb{S}^2)$ is compact.\\[4pt]
$ii)-iii)$ From the formulation (\ref{eqwart}), we see that for all $v$, $v'\in\mL^2(\Om)$, we have 
\begin{equation}\label{FirstEstim}
4\pi\,(T v,v')_{\Om}= k^2\dsp\int_{\Om}(n-n_b) (v + w)\overline{v'}\,dx = k^2\dsp\int_{\Om}(n-n_b)v\overline{v'}\,dx+k^2\dsp\int_{\Om}(n-n_b) w\overline{v'}\,dx.
\end{equation}
With the Riesz representation theorem, introduce the linear bounded operators $A,\,K:\mL^2(\Om)\to\mL^2(\Om)$ such that for all $v$, $v'\in\mL^2(\Om)$,
\[
(Av,v')_{\Om}=\cfrac{k^2}{4\pi}\dsp\int_{\Om}(n-n_b)v\overline{v'}\,dx,\qquad(Kv,v')_{\Om}=\cfrac{k^2}{4\pi}\dsp\int_{\Om}(n-n_b)w\overline{v'}\,dx.
\]
It is clear that $A$ is an isomorphism when we have $n-n_b \ge \alpha >0$ in $\Om$ or $n_b- n \ge \alpha >0$  in $\Om$. On the other hand, using that the map $v\mapsto w$ from $\mL^2(\Om)$ to $\mH^2(D)$ is continuous for any bounded domain $D\subset\R^3$ and that $\mH^2(\Om)$ is compactly embedded in $\mL^2(\Om)$, one can show that $K$ is compact. This proves item $iii)$.  Then taking $v'=v$ in (\ref{FirstEstim}), since $n-n_b$ is real valued, we get
\[
\begin{array}{lcl}
4\pi\,(T v, v)_{\Om} & = & k^2\dsp\int_{\Om}(n-n_b)|v|^2\,dx+k^2\dsp\int_{\Om}(n-n_b) w\overline{v}\,dx\\[10pt]
& = & k^2\dsp\int_{\Om}(n-n_b)|v|^2\,dx-\dsp\int_{B_R}w(\Delta\overline{w}+k^2n\overline{w})\,dx,
\end{array}
\]
for $R>0$ large enough. Integrating by parts in $B_R$ and using the radiation condition, one gets 
\[
4\pi\,(T v, v)_{\Om}=k^2\dsp\int_{\Om}(n-n_b)|v|^2\,dx+\dsp\int_{B_R}
|\nabla w|^2-k^2n|w|^2\,dx+ik\dsp\int_{\partial B_R}|w|^2\,ds+O(1/R).
\]
Taking the imaginary part and the limit as $R\to+\infty$, we obtain the important identity (\ref{identity1}) which proves item $ii)$. 
\end{proof}
\begin{remark}
We observe that working with the scattering operator $\mathscr{S}=(S^b)^{\ast}S$ consists, roughly speaking, in proceeding to the following series of experiments. The observer sends some Herglotz incident wave with density $g$ in the probed medium characterized by the index $n$, collects the far field measurements and then backpropagates the Herglotz wave with density $Sg$ through the artificial background characterized by the index $n_b$. If $k_0$ is a TE of (\ref{NewITEP}), then for any (small) $\eps$, there is a density $g\in\mL^2(\mathbb{S}^2)$ such that $\|g-\mathscr{S}g\|_{\mL^2(\mathbb{S}^2)} \le \eps\,\|g\|_{\mL^2(\mathbb{S}^2)}$ (the input signal is almost the same as the output signal). Note that if $n_b=n$, for any incident field, the observer gets at the end of the experiments the original signal. This is coherent with the fact that when $n_b=n$, there holds $S^b=S$ and so $\mathscr{S}=\mrm{Id}$.  
\end{remark}

\section{Characterisation of transmission eigenvalues via $T$}\label{sectionFirstCharac}
Our final goal is to identify TEs and transmission eigenfunctions (at least the $v$ in (\ref{NewITEP})) from the knowledge of $\mathscr{F}$, which itself can be computed from $F$ (the data). To proceed, as an intermediate step, following the presentation of \cite{KiLe13}, we explain in this section the connection existing between the TEs of (\ref{NewITEP}) (corresponding to an artificial background) and the properties of the operator $T$ appearing in the factorisation $\mathscr{F}=\herg^{\ast}T\herg$. To proceed, we denote by $\mrm{H}_{\mrm{inc}}$ the closure of the range of $H$ in $\mL^2(\Om)$.  
\begin{proposition}
We have  $\mrm{H}_{\mrm{inc}} =\{\varphi \in \mL^2(\Om); \; \Delta\varphi+k^2n_b \varphi = 0 \mbox{ in } \Om\}$.
\end{proposition}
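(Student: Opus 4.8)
The plan is to prove the two inclusions separately, relying on the objects built for the factorisation of $\mathscr{F}$.

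\textit{First inclusion.} For $g\in\mL^2(\mathbb{S}^2)$ the function $v_g$ of \eqref{vartg} is the total field in the artificial background associated with the Herglotz incident wave of density $g$, so it solves $\Delta v_g+k^2n_bv_g=0$ in all of $\R^3$; in particular $H g=v_g|_{\Om}$ belongs to $\mathcal{H}:=\{\varphi\in\mL^2(\Om);\ \Delta\varphi+k^2n_b\varphi=0\mbox{ in }\Om\}$. The set $\mathcal{H}$ is a \emph{closed} subspace of $\mL^2(\Om)$: if $\varphi_p\to\varphi$ in $\mL^2(\Om)$ with $\Delta\varphi_p+k^2n_b\varphi_p=0$ in the sense of distributions, then passing to the limit against test functions (using $n_b\in\mL^{\infty}(\Om)$) gives $\Delta\varphi+k^2n_b\varphi=0$. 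Hence $\mathcal{H}$ contains $\overline{\mathcal{R}(H)}=\mrm{H}_{\mrm{inc}}$.

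\textit{Reverse inclusion.} Since $\mathcal{H}$ and $\mrm{H}_{\mrm{inc}}$ are closed, it is enough to prove $\ker H^{\ast}\subseteq\mathcal{H}^{\perp}$, for then $\mathcal{H}=\mathcal{H}^{\perp\perp}\subseteq(\ker H^{\ast})^{\perp}=\overline{\mathcal{R}(H)}=\mrm{H}_{\mrm{inc}}$. Let $f\in\ker H^{\ast}$ and let $\psi\in\mH^2_{\loc}(\R^3)$ be the radiating solution of $\Delta\psi+k^2n_b\psi=-f$ in $\R^3$ used in the derivation of \eqref{defHstar}. That identity reads $H^{\ast}f=4\pi(S^b)^{\ast}\psi^{\infty}$, and since $S^b$ is unitary, $H^{\ast}f=0$ forces $\psi^{\infty}=0$. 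Because $n_b=1$ and $f=0$ outside $\overline{\Om}$, Rellich's lemma together with unique continuation (recall $\R^3\setminus\overline{\Om}$ is connected) yields $\psi=0$ in $\R^3\setminus\overline{\Om}$, so the Dirichlet and Neumann traces of $\psi$ on $\partial\Om$ vanish and $\psi|_{\Om}\in\mH^2_0(\Om)$.

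\textit{Conclusion.} It remains to check that $(f,\varphi)_{\Om}=0$ for every $\varphi\in\mathcal{H}$. Approximating $\psi|_{\Om}$ in $\mH^2_0(\Om)$ by functions in $C^{\infty}_c(\Om)$ and passing to the limit gives the generalised Green identity
\[
\int_{\Om}\Delta\psi\,\overline{\varphi}\,dx=\int_{\Om}\psi\,\overline{\Delta\varphi}\,dx,
\]
which is legitimate since $\Delta\varphi=-k^2n_b\varphi\in\mL^2(\Om)$. Using that $n_b$ is real valued, we get $(f,\varphi)_{\Om}=-\int_{\Om}(\Delta\psi+k^2n_b\psi)\,\overline{\varphi}\,dx=-\int_{\Om}\psi\,\overline{(\Delta\varphi+k^2n_b\varphi)}\,dx=0$, which proves $\ker H^{\ast}\subseteq\mathcal{H}^{\perp}$ and hence the statement. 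I expect the only genuinely delicate points to be the regularity bookkeeping: deducing $\psi|_{\Om}\in\mH^2_0(\Om)$ from the vanishing of the exterior traces on the Lipschitz boundary $\partial\Om$, and justifying the generalised Green identity when $\varphi$ is merely in $\mL^2(\Om)$ (with $\mL^2$ Laplacian) rather than in $\mH^2(\Om)$; both are handled by the density of $C^{\infty}_c(\Om)$ in $\mH^2_0(\Om)$.
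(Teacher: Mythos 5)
Your proof is correct and follows essentially the same route as the paper: range of $H$ contained in the closed solution set for one inclusion, and for the other the adjoint identity $H^{\ast}f=4\pi(S^b)^{\ast}\psi^{\infty}$, Rellich's lemma, and an integration by parts over $\Om$ with vanishing boundary terms. The only (immaterial) difference is organizational: the paper takes $f$ in the solution set with $H^{\ast}f=0$ and tests $\psi$'s equation against $f$ itself to get $-\|f\|^2_{\mL^2(\Om)}=0$, whereas you test against an arbitrary $\varphi$ in the solution set and invoke $\mathcal{H}=\mathcal{H}^{\perp\perp}\subseteq(\ker H^{\ast})^{\perp}$ — the same computation either way.
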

\begin{proof}
Clearly $H(\mL^2(\mathbb{S}^2))\subset\mrm{H}_{\mrm{inc}}$ and $\mrm{H}_{\mrm{inc}}$ is a closed subset of $\mL^2(\Om)$. Therefore the closure of the range of $H$ in $\mL^2(\Om)$ is a subset of $\mrm{H}_{\mrm{inc}}$. Now consider some $f\in\mrm{H}_{\mrm{inc}}$ such that 
\[
(Hg,f)_{\Om}=(g,H^{\ast}f)_{\mathbb{S}^2}=0,\qquad \forall g\in\mL^2(\mathbb{S}^2).
\]
This is equivalent to have $H^{\ast}f=0$. From (\ref{defHstar}), we have $\herg^{\ast}f=4\pi(S^b)^{\ast}\psi^{\infty}$ where $\psi^{\infty}$ is the far field pattern of the outgoing function $\psi\in\mH^2_{\loc}(\R^3)$ satisfying $\Delta\psi +k^2 n_b \psi =-f$ in $\R^3$. Since $(S^b)^{\ast}$ is an isomorphism of $\mL^2(\mathbb{S}^2)$, we deduce $\psi^{\infty}=0$. 
From the Rellich lemma, we infer that $\psi=0$ in $\R^3\setminus\overline{\Om}$. Then integrating by parts in $\Om$, we get
\[
-\|f\|_{\mL^2(\Om)}^2=\int_{\Om}(\Delta\psi +k^2 n_b \psi)\overline{f}\,dx=\int_{\Om}\psi(\Delta \overline{f} +k^2 n_b \overline{f})\,dx=0
\]
(because $f\in\mrm{H}_{\mrm{inc}}$). Thus $f=0$ which shows that the desired result.
\end{proof}
\noindent From the energy identity (\ref{identity1}), we see that we have 
\[
\Im m\,(Tv,v)_{\Om}\ge 0,\qquad \forall v\in\mrm{H}_{\mrm{inc}}.
\]
The following proposition ensures that the TEs of (\ref{NewITEP}) coincide exactly with the frequencies $k>0$ such that the form $v\mapsto \Im m\,(Tv,v)_{\Om}$ is not definite positive in $\mrm{H}_{\mrm{inc}}$.
\begin{proposition}\label{propoTET}
Assume that we have $n-n_b \ge \alpha >0$ in $\Om$ or $n_b- n \ge \alpha >0$ in $\Om$ for some constant $\alpha$. Then
\[
\mbox{$k$ is a TE of (\ref{NewITEP})}\qquad\Leftrightarrow\qquad \exists v\in\mrm{H}_{\mrm{inc}}\setminus\{0\}\mbox{ such that }\Im m\,(Tv,v)_{\Om}=0.
\]
Moreover if $k$ is a TE of (\ref{NewITEP}), then $(v,w|_{\Om})\in\mL^2(\Om)\times\mH^2_0(\Om)$, where $w$ is defined as the solution of (\ref{eqwart}), is a corresponding eigenpair. And we have $(Tv,v)_{\Om}= 0$.
\end{proposition}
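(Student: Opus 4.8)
The plan is to use the energy identity (\ref{identity1}) as the bridge between the spectral condition on $T$ and the transmission eigenvalue problem. For $v\in\mL^2(\Om)$, writing $w\in\mH^2_{\loc}(\R^3)$ for the solution of (\ref{eqwart}) attached to $v$, one has $4\pi\,\Im m\,(Tv,v)_{\Om}=k\int_{\mathbb{S}^2}|w^{\infty}|^2\,ds$, so $\Im m\,(Tv,v)_{\Om}=0$ is equivalent to $w^{\infty}=0$; by Rellich's lemma, unique continuation for the bounded potential $k^2n$, and the connectedness of $\R^3\setminus\overline{\Om}$, this is in turn equivalent to $w=0$ in $\R^3\setminus\overline{\Om}$. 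The whole statement then reduces to matching this last property with the eigenvalue condition for (\ref{NewITEP}).

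For the implication ``$\Leftarrow$'': given $v\in\mrm{H}_{\mrm{inc}}\setminus\{0\}$ with $\Im m\,(Tv,v)_{\Om}=0$, let $w$ solve (\ref{eqwart}) with this $v$. By the above $w=0$ outside $\overline{\Om}$, so, $w$ being in $\mH^2_{\loc}(\R^3)$, its Dirichlet and Neumann traces on the Lipschitz interface $\partial\Om$ vanish and $w|_{\Om}\in\mH^2_0(\Om)$. Restricting the first line of (\ref{eqwart}) to $\Om$ and using that $v\in\mrm{H}_{\mrm{inc}}$ means $\Delta v+k^2n_bv=0$ in $\Om$, the pair $(v,w|_{\Om})$ is a non trivial solution of (\ref{NewITEP}), so $k$ is a TE.

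For ``$\Rightarrow$'': let $(v,w_0)\in\mL^2(\Om)\times\mH^2_0(\Om)$ be a non trivial solution of (\ref{NewITEP}). First $v\neq0$: were $v=0$, then $w_0\in\mH^2_0(\Om)\setminus\{0\}$ would solve $\Delta w_0+k^2nw_0=0$ in $\Om$, its extension $\widetilde{w}_0$ by zero would lie in $\mH^2(\R^3)$ and satisfy $(\Delta+k^2n)\widetilde{w}_0=0$ in $\R^3$ (no interface term, since $w_0$ has vanishing Cauchy data and $n\equiv1$ outside $\Om$), hence $\widetilde{w}_0\equiv0$ by unique continuation, a contradiction. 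Thus $v\in\mrm{H}_{\mrm{inc}}\setminus\{0\}$. The same extension $\widetilde{w}_0\in\mH^2(\R^3)$ solves $\Delta\widetilde{w}_0+k^2n\widetilde{w}_0=k^2(n_b-n)v$ in $\R^3$ and, being compactly supported, the radiation condition; by uniqueness of the solution of (\ref{eqwart}) (Proposition \ref{PropoEstimRegu}) it equals $w$, so $w|_{\Om}=w_0$ — which is the ``moreover'' assertion — and $w^{\infty}=0$, whence $\Im m\,(Tv,v)_{\Om}=0$ by the energy identity.

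It remains to upgrade this to $(Tv,v)_{\Om}=0$. Rewriting the first line of (\ref{NewITEP}) as $k^2(n-n_b)(v+w_0)=-\Delta w_0-k^2n_bw_0$ in $\Om$, definition (\ref{defTtC}) gives $4\pi\,(Tv,v)_{\Om}=-\int_{\Om}(\Delta w_0+k^2n_bw_0)\,\overline{v}\,dx$; two integrations by parts, legitimate with no boundary contribution because $w_0\in\mH^2_0(\Om)$ has vanishing Cauchy data while $v\in\mL^2(\Om)$ has $\Delta v\in\mL^2(\Om)$, turn this into $-\int_{\Om}w_0\,(\Delta\overline{v}+k^2n_b\overline{v})\,dx=0$, using the second line of (\ref{NewITEP}) and that $n_b$ is real. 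The routine ingredients are the Green formulas and the zero-extension bookkeeping; the points deserving genuine care are the Rellich/unique-continuation passage from $w^{\infty}=0$ to $w=0$ outside $\overline{\Om}$, and, in ``$\Rightarrow$'', ruling out $v=0$ and invoking uniqueness for (\ref{eqwart}). The sign hypothesis on $n-n_b$ is not essential to this particular argument beyond what Proposition \ref{propoCompact} already supplies; it is inherited from the surrounding analysis.
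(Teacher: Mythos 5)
Your proposal is correct and follows essentially the same route as the paper: the energy identity (\ref{identity1}) plus Rellich's lemma for the converse direction, and the zero-extension of the eigenfunction identified with the solution of (\ref{eqwart}) followed by the double integration by parts of (\ref{IPPT}) for the direct direction. The extra details you supply (ruling out $v=0$ by unique continuation, invoking uniqueness for (\ref{eqwart}), and noting that the sign hypothesis is not actually used here) are accurate elaborations of steps the paper leaves implicit.
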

\begin{proof}
Assume that $k$ is a TE of (\ref{NewITEP}). Let $(v,w)\in\mL^2(\Om)\times\mH^2_0(\Om)$ be an associated non zero eigenpair. Then obviously $v\in\mrm{H}_{\mrm{inc}}\setminus\{0\}$ ($v$ is not null otherwise we would have $w\equiv0$). Using (\ref{Rewriting}) and integrating twice by parts, then we find 
\begin{equation}\label{IPPT}
\begin{array}{lcl}
(Tv,v)_{\Om}=\dsp\frac{1}{4\pi}\int_{\Om}k^2(n - n_b) (v + w) \overline{v}\,dx&=&-\dsp\frac{1}{4\pi}\int_{\Om} (\Delta w+k^2n_bw) \overline{v}\,dx\\[10pt]
&=&-\dsp\frac{1}{4\pi}\int_{\Om} w (\Delta \overline{v}+k^2n_b\overline{v})\,dx\ =\ 0.
\end{array}
\end{equation}
Conversely, assume that $v\in\mrm{H}_{\mrm{inc}}\setminus\{0\}$ is such that $\Im m\,(Tv,v)_{\Om}=0$. Then from (\ref{identity1}), we have $w^{\infty}=0$. From the Rellich lemma, this implies $w=0$ in $\R^3\setminus\overline{\Om}$. Then $(v,w|_{\Om})$ belongs to $\mL^2(\Om)\times\mH^2_0(\Om)$. This shows that $k$ is a TE of (\ref{NewITEP}). In this case, from (\ref{IPPT}), we infer that $(Tv,v)_{\Om}= 0$.
\end{proof}

\section{Spectral properties of $\mathscr{F}$ outside transmission eigenvalues}\label{SectionOutsideTE}

\begin{figure}[!ht]
\centering
\begin{tikzpicture}[scale=1]
\draw (0,2) circle (2);
\draw[->] (0,-0.2)--(0,4.4);
\draw[->] (-2.2,0)--(2.2,0);
\draw (-0.1,2)--(0.1,2);
\node at(0.7,2){$2\pi i/k$};
\node at(-1.65,3.75){$\lambda_2$};
\node at(2.4,2.75){$\lambda_3$};
\node at(-2.4,1.8){$\lambda_\star$};
\node at(-0.2,-0.2){$0$};
\foreach \x in {0.3,0.4,0.5,1,2,...,50}
\pgfmathtruncatemacro{\tmp}{110/\x}
\draw[fill=blue,draw=none] ({2*cos(\tmp-90)},{2+2*sin(\tmp-90)}) circle (2.2pt);
\end{tikzpicture}\qquad\qquad 
\raisebox{0.3cm}{\begin{tikzpicture}[scale=1]
\draw (0,0) circle (2);
\draw[->] (0,-2.2)--(0,2.4);
\draw[->] (-2.2,0)--(2.2,0);
\draw (-0.1,2)--(0.1,2);
\node at(-0.5,1.5){$e^{i\delta_3}$};
\node at(-1.1,-1){$e^{i\delta_2}$};
\node at(0.45,-1.65){$e^{i\delta_{\star}}$};
\node at(-0.2,-0.2){$0$};
\node at(2.1,-0.2){$1$};
\foreach \x in {0.3,0.4,0.5,1,2,...,50}
\pgfmathtruncatemacro{\tmp}{110/\x}
\draw[fill=blue,draw=none] ({2*cos(\tmp)},{2*sin(\tmp)}) circle (2.2pt);
\end{tikzpicture}}
\caption{Left: eigenvalues of $\mathscr{F}$. Right: eigenvalues of $\mathscr{S}$. Here the representation corresponds to a situation where $n-n_b\ge\alpha>0$ in $\Om$.\label{Illustration}}
\end{figure}
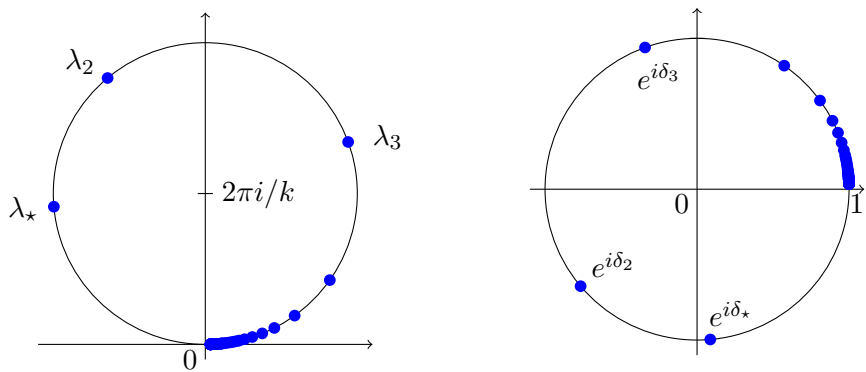

\noindent In Proposition \ref{propoTET}, we have established a clear connection between the TEs of problem (\ref{NewITEP}) and the kernel of the positive form $v\mapsto \Im m\,(Tv,v)_{\Om}$ in $\mrm{H}_{\mrm{inc}}$. We would like to translate this into a criterion on the kernel of the form $g\mapsto \Im m\,(\mathscr{F}g,g)_{\mathbb{S}^2}$ in $\mL^2(\mathbb{S}^2)$ using the factorisation $\mathscr{F}=H^{\ast}TH$ which implies the relation $(\mathscr{F}g,g)_{\mathbb{S}^2}=(T\herg g,\herg g)_{\Om}$ for all $g\in\mL^2(\mathbb{S}^2)$. However, this is not that simple due to the fact that the range of $\herg$ is not closed in $\mL^2(\Om)$. Said differently, if $v\in\mrm{H}_{\mrm{inc}}$ is such that $\Im m\,(Tv,v)_{\Om}=0$, in general there is no $g\in\mL^2(\mathbb{S}^2)$ such that $v=\herg g$ (for this particular point, see the articles \cite{BlPS14,PaSV14,ElHu15,ElHu18}). Instead, in the rest of the article we will study the way the eigenvalues of $\mathscr{F}$ accumulate at zero. First, we consider the situation when $k$ is not a transmission eigenvalue. \\
\newline 
Since $\mathscr{F}:\mL^2(\mathbb{S}^2)\to\mL^2(\mathbb{S}^2)$ is normal and compact (Propositions \ref{PropFunitary} and \ref{propoCompact}), there is an orthonormal complete basis $(g_j)_{j=1}^{+\infty}$ of $\mL^2(\mathbb{S}^{2})$ such that 
\begin{equation}\label{spectrum}
\mathscr{F}g_j = \lambda_j g_j,
\end{equation}
where the $\lambda_j$ are the eigenvalues of $\mathscr{F}$. The terms of the sequence $(\lambda_j)$ are complex numbers that accumulate at zero. Moreover, since $\mathscr{S}$ is unitary, the eigenvalues of $\mathscr{F}$ lie on the circle of radius $2\pi/k$ and center $2\pi i/k$ (see Figure \ref{Illustration} left).\\
\newline
Assume that we have $n-n_b \ge \alpha >0$ in $\Om$ or $n_b-n \ge \alpha >0$ in $\Om$ for some constant $\alpha$. In this case, if $k$ is not a transmission eigenvalue then $\mathscr{F}$ is injective and so the $\lambda_j$ in (\ref{spectrum}) are all non zero. Indeed, if $g\in\ker\,\mathscr{F}$, we must have $0=(\mathscr{F}g,g)_{\mathbb{S}^2}=(T\herg g,\herg g)_{\Om}$. From Proposition \ref{propoTET}, we deduce $\herg g=0$ and so $g=0$ because $H$ is injective (Proposition \ref{propoCompact} $i)$). In this situation, we denote by 
\begin{equation}\label{DefDelta}
e^{i\delta_j},\qquad\mbox{ with }\delta_j\in(0;2\pi),
\end{equation}
the eigenvalues of $\mathscr{S}$ (see Figure \ref{Illustration} right). With this notation, we have
\[
\lambda_j=\cfrac{2\pi}{ik}\,(e^{i\delta_j}-1)\qquad\Leftrightarrow\qquad e^{i\delta_j}=1+\cfrac{ik}{2\pi}\,\lambda_j.
\]
The only possible accumulation points for the sequence $(\delta_j)$ are $0$ and $2\pi$. Using the decomposition $T=A+K$ obtained in (\ref{FredholmDecompo}) where the sign of the isomorphism $A$ is known and where $K$ is compact, one can get the following proposition. Its proof is exactly the same as the one of \cite[Lemma 4.1]{KiLe13} (see also \cite[Theorem 2.25]{CaCoHa16}). 
\begin{proposition}\label{PropoOutsideTEs}
Assume that $k$ is not a TE of the problem (\ref{NewITEP}). \\[3pt]
- If $n-n_b\ge\alpha>0$ in $\Om$, then the sequence $(\delta_j)$ accumulates only at $0$ (see Figure \ref{Illustration} right).\\[3pt]
- If $n_b-n\ge\alpha>0$ in $\Om$, then the sequence $(\delta_j)$ accumulates only at $2\pi$. 
\end{proposition}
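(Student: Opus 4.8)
The plan is to follow the standard inside--outside duality argument as in \cite[Lemma 4.1]{KiLe13}, adapted to the modified operator $\mathscr{F}=\herg^{\ast}T\herg$ with the Fredholm decomposition $T=A+K$ from Proposition \ref{propoCompact}$(iii)$. The key point is that since $k$ is not a TE, $\mathscr{F}$ is injective (as established just before the statement), so all eigenvalues $\lambda_j$ are nonzero and the phases $\delta_j\in(0;2\pi)$ are well defined; the only possible accumulation points of $(\delta_j)$ are $0$ and $2\pi$, because $\lambda_j\to 0$ on the circle through $0$ with tangent the real axis at $0$. So the whole content is to rule out one of the two accumulation points, depending on the sign of $n-n_b$.

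First I would translate the phase condition into a sign condition on a Rayleigh-type quotient. Writing $g_j$ for the normalized eigenfunction of $\mathscr{F}$ with eigenvalue $\lambda_j$, one has $(\mathscr{F}g_j,g_j)_{\mathbb{S}^2}=\lambda_j=(T\herg g_j,\herg g_j)_{\Om}$. A direct computation shows $\sign(\Re e\,\lambda_j)=\sign(\sin\delta_j\cdot(\cos\delta_j-1)/(\text{something})$); more cleanly, from $\lambda_j=\frac{2\pi}{ik}(e^{i\delta_j}-1)$ one gets $\Re e\,\lambda_j=\frac{2\pi}{k}\sin\delta_j$, so the sign of $\Re e\,(T\herg g_j,\herg g_j)_{\Om}$ equals the sign of $\sin\delta_j$: it is positive when $\delta_j$ is close to $0$ and negative when $\delta_j$ is close to $2\pi$. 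Hence to show $(\delta_j)$ accumulates only at $0$, it suffices to show $\Re e\,(T v_j,v_j)_{\Om}>0$ for $v_j:=\herg g_j$ along any subsequence on which $\delta_j$ stays away from $0$ (hence bounded away from $0$, converging to $2\pi$), and symmetrically in the other case.

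Now I would use the decomposition $T=A+K$. On the candidate subsequence, normalize $v_j=\herg g_j$; since $H$ is injective and compact but $g_j$ are orthonormal, $\|v_j\|_{\mL^2(\Om)}\to 0$, so instead I would rescale and set $\hat v_j:=v_j/\|v_j\|_{\mL^2(\Om)}$, examining $\Re e\,(T\hat v_j,\hat v_j)_{\Om}$, whose sign is the same as that of $\Re e\,\lambda_j$. Up to a further subsequence $\hat v_j\rightharpoonup \hat v$ weakly in $\mL^2(\Om)$; all the $\hat v_j$ lie in the closed subspace $\mrm{H}_{\mrm{inc}}$, which is weakly closed, so $\hat v\in\mrm{H}_{\mrm{inc}}$. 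The compact part gives $(K\hat v_j,\hat v_j)_{\Om}\to(K\hat v,\hat v)_{\Om}$ (compactness upgrades weak to strong on one slot), while for the coercive part, in the case $n-n_b\ge\alpha>0$ we have $(A\hat v_j,\hat v_j)_{\Om}=\frac{k^2}{4\pi}\int_{\Om}(n-n_b)|\hat v_j|^2\,dx\ge \frac{k^2\alpha}{4\pi}$, bounded below; by weak lower semicontinuity of this positive quadratic form, $\liminf \Re e\,(T\hat v_j,\hat v_j)_{\Om}\ge \frac{k^2}{4\pi}\int_\Om (n-n_b)|\hat v|^2\,dx + \Re e\,(K\hat v,\hat v)_{\Om}=\Re e\,(T\hat v,\hat v)_{\Om}$. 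If this $\liminf$ is nonnegative I would then argue it cannot be $0$: equality forces $\hat v_j\to\hat v$ strongly (from the coercivity-plus-weak-lsc argument applied to $A$), hence $\|\hat v\|=1$ and $\Im m\,(T\hat v,\hat v)_{\Om}=0$ (the imaginary part also passes to the limit along the strongly convergent subsequence, and $\Im m\,\lambda_j\to 0$), so by Proposition \ref{propoTET} $k$ would be a TE, a contradiction. Therefore $\Re e\,(T\hat v_j,\hat v_j)_{\Om}$ is eventually positive along the subsequence, i.e. $\sin\delta_j>0$, contradicting $\delta_j\to 2\pi$. The case $n_b-n\ge\alpha>0$ is identical with $A$ negative coercive, giving eventual negativity and ruling out accumulation at $0$.

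The main obstacle I anticipate is the bookkeeping around the rescaling $\hat v_j=v_j/\|v_j\|$ and making sure the sign of the \emph{rescaled} quadratic form is genuinely the sign of $\Re e\,\lambda_j$ (this is fine since rescaling by a positive scalar squared), together with the careful use of weak lower semicontinuity for the indefinite form $(Tv,v)_\Om$: one must isolate the definite coercive piece $A$ from the compact piece $K$ so that the lsc argument applies only to the sign-definite part and $K$ is handled by compactness. Since this is exactly the structure exploited in \cite[Lemma 4.1]{KiLe13}, and the only new features here are that $\mrm{H}_{\mrm{inc}}$ is the $n_b$-Helmholtz space rather than the $n_b=1$ space and that $\herg$ is built from $u_b$ rather than plane waves, no genuinely new difficulty arises and one may simply invoke that argument verbatim, as the paper does.
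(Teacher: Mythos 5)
Your overall route is exactly the one the paper takes: for this statement the paper proves nothing itself but invokes \cite[Lemma 4.1]{KiLe13} verbatim, and your reconstruction of that argument (translate the location of $\delta_j$ into the sign of $\Re e\,\lambda_j=\frac{2\pi}{k}\sin\delta_j$, normalize $\hat v_j=\herg g_j/\|\herg g_j\|_{\mL^2(\Om)}$, split $T=A+K$, extract a weak limit $\hat v\in\mrm{H}_{\mrm{inc}}$, and contradict Proposition \ref{propoTET}) is the right skeleton. Two steps, however, are justified in a way that does not hold up as written.

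First, to conclude $\Im m\,(T\hat v,\hat v)_{\Om}=0$ you appeal to ``$\Im m\,\lambda_j\to0$''. But $\Im m\,(T\hat v_j,\hat v_j)_{\Om}=\Im m\,\lambda_j/\|\herg g_j\|_{\mL^2(\Om)}^2$, and, as you yourself note, $\|\herg g_j\|_{\mL^2(\Om)}\to0$; this is a $0/0$ expression and $\Im m\,\lambda_j\to 0$ alone proves nothing. The repair is the phase computation: $\lambda_j=\frac{2\pi}{k}\bigl(\sin\delta_j+i(1-\cos\delta_j)\bigr)$ has argument $\delta_j/2$, so along a subsequence with $\delta_j\to2\pi$ one has $\lambda_j/|\lambda_j|\to-1$; since $(T\hat v_j,\hat v_j)_{\Om}=\lambda_j/\|\herg g_j\|_{\mL^2(\Om)}^2$ has the same argument and its modulus is bounded by $\|T\|$, one gets both $\Im m\,(T\hat v_j,\hat v_j)_{\Om}\to0$ and $\limsup_j\Re e\,(T\hat v_j,\hat v_j)_{\Om}\le0$. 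Second, Proposition \ref{propoTET} requires $\hat v\ne0$, and your route to this (``equality forces strong convergence, hence $\|\hat v\|=1$'') only covers the case where your lower-semicontinuity inequality is an equality; it silently skips the case $\Re e\,(T\hat v,\hat v)_{\Om}<\liminf_j\Re e\,(T\hat v_j,\hat v_j)_{\Om}\le 0$ (where, admittedly, $\hat v\neq 0$ is immediate). The clean fix uses the uniform bound you already wrote down but then abandoned: keep $(A\hat v_j,\hat v_j)_{\Om}\ge\frac{k^2\alpha}{4\pi}$ rather than the weaker lsc bound $(A\hat v,\hat v)_{\Om}$, so that $\limsup_j\Re e\,(T\hat v_j,\hat v_j)_{\Om}\le0$ forces $\Re e\,(K\hat v,\hat v)_{\Om}\le-\frac{k^2\alpha}{4\pi}<0$, hence $\hat v\ne0$, with no case analysis. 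With these two repairs the contradiction with the assumption that $k$ is not a TE is complete, and the case $n_b-n\ge\alpha>0$ follows by the sign flip as you say.
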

\noindent When $k$ is not a TE of the problem (\ref{NewITEP}), from Proposition \ref{PropoOutsideTEs} we deduce that we can order the phases of the eigenvalues of $\mathscr{S}$ so that
\[
\begin{array}{ll}
2\pi>\delta_1 \ge \delta_2 \ge \cdots \ge \delta_j\ge\cdots>0 &\quad\mbox{ when }n-n_b\ge\alpha>0\\[4pt]
0<\delta_1 \le \delta_2 \le \cdots \le \delta_j\le\cdots<2\pi &\quad\mbox{ when }n_b-n\ge\alpha>0.
\end{array}
\]
In the analysis below, the quantity $\delta_1$ will play a particular role. We set 
\[
\delta_{\star}=\delta_1\qquad\mbox{ and }\qquad \lambda_{\star}=\cfrac{2\pi}{ik}\,(e^{i\delta_{\star}}-1).
\]

\section{Spectral properties of $\mathscr{F}$ at transmission eigenvalues}\label{SectionSpectralAtTEs}

Now we study, the behaviour of the eigenvalues of $\mathscr{F}$ as $k$ tends to a TE of the problem (\ref{NewITEP}). In the analysis below, we will conduct calculus with varying $k$. As a consequence, we will explicitly indicate the dependence on $k$, denoting for example the far field operator by $\mathscr{F}(k)$. Moreover, in order to include the case of $n_b$ depending on $k$ as in \eqref{choixdenb}, we make the additional assumption that the mapping $k \mapsto n_b(k)$ is continuously differentiable from $\R_+^*$ to $\mL^\infty(\Om)$.\\
\newline
We start with a technical result whose (classical) proof is given in the Appendix. If $\mX$, $\mY$ are two Banach spaces, we denote $\mathcal{L}(X,Y)$ the set of linear bounded operators from $\mX$ to $\mY$. This space is endowed with the usual operator norm
\[
\|L\|:=\sup_{\varphi\in\mX\setminus\{0\}}\cfrac{\|L\varphi\|_{\mY}}{\|\varphi\|_{\mX}},\qquad\forall L\in\mathcal{L}(X,Y).
\]
\begin{proposition}\label{ContinuousDependancek}~\\
The mapping $k \mapsto \herg(k)$ is continuous from $\R^{\ast}_{+}$ to $\mathcal{L}(\mL^2(\mathbb{S}^2),\mL^2(\Om))$.\\
The mapping $k \mapsto T(k)$ is continuous from $\R^{\ast}_{+}$ to $\mathcal{L}(\mL^2(\Om),\mL^2(\Om))$.\\
The mapping $k \mapsto \mathscr{F}(k)$ is continuous from $\R^{\ast}_{+}$ to $\mathcal{L}(\mL^2(\mathbb{S}^2),\mL^2(\mathbb{S}^2))$.
\end{proposition}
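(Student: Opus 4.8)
The plan is to deduce all three continuity statements from a single analytic fact: the forward solution operators of the background scattering problem (\ref{PbChampTotalFreeSpaceComp}) and of problem (\ref{eqwart}) depend continuously on $k$ in operator norm, uniformly on compact subintervals of $\R^{\ast}_{+}$. Precisely, write $\mathcal{S}_b(k)$ for the map sending $u_i\in\mH^2(\Om)$ to $u_b|_{\Om}$, with $u_b$ the total field of (\ref{PbChampTotalFreeSpaceComp}) with incident field $u_i$, and $\mathcal{W}(k)$ for the map sending $v\in\mL^2(\Om)$ to $w|_{\Om}\in\mH^2(\Om)$, with $w$ the solution of (\ref{eqwart}). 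I claim $k\mapsto\mathcal{S}_b(k)$ is continuous into $\mathcal{L}(\mH^2(\Om),\mH^2(\Om))$ and $k\mapsto\mathcal{W}(k)$ is continuous into $\mathcal{L}(\mL^2(\Om),\mH^2(\Om))$. Everything else is assembly, so this is the heart of the matter — and I expect it to be the main obstacle.

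To prove this continuity lemma, I would reformulate each problem as a second-kind Fredholm (Lippmann–Schwinger) equation posed on $\Om$: for (\ref{PbChampTotalFreeSpaceComp}), $(\mrm{Id}+k^2V_k)u_b=u_i$ on $\Om$ with $(V_k\varphi)(x)=\int_{\Om}\Phi_k(x,y)(n_b(k)(y)-1)\varphi(y)\,dy$ and $\Phi_k(x,y)=e^{ik|x-y|}/(4\pi|x-y|)$; an analogous equation with density $n-1$ governs (\ref{eqwart}). Here $V_k$ is compact on $\mL^2(\Om)$ and bounded from $\mL^2(\Om)$ into $\mH^2(\Om)$, and $k\mapsto V_k$ is norm-continuous because $\Phi_k$ depends continuously on $k$ and, by hypothesis, $k\mapsto n_b(k)$ is continuous into $\mL^\infty(\Om)$. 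Since (\ref{PbChampTotalFreeSpaceComp}) and (\ref{eqwart}) are uniquely solvable for every $k\in\R^{\ast}_{+}$, the operators $\mrm{Id}+k^2V_k$ are injective, hence boundedly invertible by the Fredholm alternative; moreover their inverses are bounded uniformly on any compact $I\subset\R^{\ast}_{+}$ (either by a standard compactness argument — a would-be blow-up sequence, normalised, converges since $V_k$ is norm-continuous and $V_{k_\star}$ is compact, to a nonzero element of $\ker(\mrm{Id}+k_\star^2V_{k_\star})$, a contradiction — or by invoking Proposition \ref{PropoEstimRegu} and its analogue for (\ref{PbChampTotalFreeSpaceComp})). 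The resolvent identity $(\mrm{Id}+k^2V_k)^{-1}-(\mrm{Id}+\ell^2V_\ell)^{-1}=(\mrm{Id}+k^2V_k)^{-1}(\ell^2V_\ell-k^2V_k)(\mrm{Id}+\ell^2V_\ell)^{-1}$ then yields norm-continuity of $k\mapsto(\mrm{Id}+k^2V_k)^{-1}$, and recovering $u_b$ (resp. $w$) from the representation formula, using interior elliptic regularity to gain two derivatives near $\partial\Om$ where the source vanishes, upgrades this to the claimed $\mH^2(\Om)$-continuity. A variational formulation on a ball $B_R\supset\overline{\Om}$ with the exact Dirichlet-to-Neumann map $\Lambda_k$ on $\partial B_R$, which depends analytically on $k$ through spherical Hankel functions, would be an equally good route.

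Granting the lemma, continuity of $k\mapsto\herg(k)$ follows by writing $\herg(k)g=v_g|_{\Om}=\mathcal{S}_b(k)\big(H_0(k)g\big)$, where $H_0(k)g=\int_{\mathbb{S}^2}g(\theta_i)e^{ik\theta_i\cdot x}\,ds(\theta_i)$ is restricted to $\Om$. The kernel $e^{ik\theta_i\cdot x}$ is jointly smooth on $I\times\overline{\Om}\times\mathbb{S}^2$, so by Cauchy–Schwarz in $\theta_i$ the map $k\mapsto H_0(k)$ is Lipschitz, indeed real-analytic, from $I$ into $\mathcal{L}(\mL^2(\mathbb{S}^2),\mH^2(\Om))$; composing with the norm-continuous, locally bounded family $\mathcal{S}_b(k)$ and then with the compact embedding $\mH^2(\Om)\hookrightarrow\mL^2(\Om)$ gives continuity of $k\mapsto\herg(k)$ in $\mathcal{L}(\mL^2(\mathbb{S}^2),\mL^2(\Om))$.

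For $T(k)$, formula (\ref{defTtC}) reads $T(k)=\tfrac{k^2}{4\pi}\,M_{n-n_b(k)}\,(\mrm{Id}+J\,\mathcal{W}(k))$, where $M_\phi$ denotes multiplication by $\phi$ and $J$ the embedding $\mH^2(\Om)\hookrightarrow\mL^2(\Om)$: the scalar $k^2$, the multiplier $M_{n-n_b(k)}$ (continuous in $\mathcal{L}(\mL^2(\Om))$ since $n_b(k)$ is continuous into $\mL^\infty(\Om)$), and $\mathcal{W}(k)$ are all norm-continuous and locally bounded, and a product of such families is norm-continuous, so $k\mapsto T(k)$ is continuous in $\mathcal{L}(\mL^2(\Om),\mL^2(\Om))$. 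Finally, by Proposition \ref{PropFunitary} we have $\mathscr{F}(k)=\herg(k)^\ast T(k)\herg(k)$; since $A\mapsto A^\ast$ is an isometry of the relevant operator spaces and composition is jointly continuous on norm-bounded subsets, and since $\herg(k)$ and $T(k)$ are norm-continuous and locally bounded (Propositions \ref{PropoEstimRegu} and \ref{propoCompact}), we conclude that $k\mapsto\mathscr{F}(k)$ is continuous in $\mathcal{L}(\mL^2(\mathbb{S}^2),\mL^2(\mathbb{S}^2))$.
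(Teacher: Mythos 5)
Your proof is correct, and its overall architecture coincides with the paper's: isolate the continuity (in operator norm, uniformly on compact subsets of $\R^{\ast}_{+}$) of the solution operators of \eqref{PbChampTotalFreeSpaceComp} and \eqref{eqwart} as the key lemma, then assemble the three statements by writing $\herg(k)$ as the composition of the entire Herglotz operator with the background solution map, $T(k)$ as a product of a multiplication operator with $\mrm{Id}+\mathcal{W}(k)$, and $\mathscr{F}(k)=\herg(k)^{\ast}T(k)\herg(k)$. Where you differ is in the proof of the key lemma: you go through the Lippmann--Schwinger second-kind equation $(\mrm{Id}+k^2V_k)u=u_i$, norm-continuity of the weakly singular volume potential $V_k$, the Fredholm alternative, and the resolvent identity, whereas the paper works with the variational formulation on a ball $B_R\supset\overline{\Om}$ involving the Dirichlet-to-Neumann map $\Lambda(k)$, inverts $\mathcal{T}(k)$ by Fredholm theory, and gets continuity of $k\mapsto\mathcal{T}(k)^{-1}$ from a Neumann-series perturbation argument (a route you explicitly acknowledge as an alternative). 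The two are essentially interchangeable here; your resolvent-identity step makes the $k$-continuity of the inverse slightly more explicit than the paper's presentation, which leans on the uniform bound of Proposition \ref{PropoEstimRegu} and leaves part of the continuity argument implicit, while the paper's variational route avoids discussing mapping properties of the volume potential. Minor cosmetic points only: the sign in front of $(n_b(k)-1)$ in your kernel is a matter of convention, and the $\mH^2$ upgrade follows directly from the mapping property $V_k:\mL^2(\Om)\to\mH^2(\Om)$ without needing the remark about the source vanishing near $\partial\Om$.
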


\subsection{A sufficient condition for the detection of transmission eigenvalues}
In this paragraph, we provide a sufficient condition allowing one to detect TEs of (\ref{NewITEP}). This is the first main result of the article. The first part of the statement is similar to \cite{EcPi95,KiLe13} (see also \cite[Theorem 4.47]{CaCoHa16}). The second part concerning the identification of $v$ where $(v,w)$ is an eigenpair of (\ref{NewITEP}) is new.
\begin{theorem} \label{TheoPhase1}
Let $k_0>0$ and $I=(k_0-\eps;k_0-\eps)\setminus\{k_0\}$ such that no $k\in I$ is a TE of (\ref{NewITEP}). Assume that there is a sequence $(k_j)$ of elements of $I$ such that 
\[
\lim_{j\to+\infty} k_j=k_0\qquad\mbox{ and }\qquad \lim_{j\to+\infty}\delta_{\star}(k_j)=
\begin{array}{|ll}
2\pi & \mbox{ when }n-n_b(k_0)\ge\alpha>0\\[4pt]
0 & \mbox{ when }n_b(k_0)-n\ge\alpha>0.
\end{array}
\]
Then $k_0$ is a TE of (\ref{NewITEP}). Moreover, the sequence $(v_j)$, with
$$
v_j := \frac{\herg(k_j) g_{j}}{{\|\herg(k_j) g_{j} \|_{\mL^2(\Om)}}}, 
$$
admits a subsequence which converges strongly to $v\in\mL^2(\Om)$, where $(v,w)$ is an eigenpair of (\ref{NewITEP}) associated with $k_0$. Here $g_{j}$ is a normalised eigenfunction of $\mathscr{F}$ associated with $\lambda_{\star}(k_j)$ and $w$ is the solution of \eqref{eqwart} with $k=k_0$.
\end{theorem}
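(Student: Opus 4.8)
The plan is to run the Eckmann--Pillet/Kirsch--Lechleiter scheme adapted to the $k$-dependent setting, and then to squeeze strong convergence out of the coercive splitting $T=A+K$. I would set $h_j:=\herg(k_j)g_j$, so that $v_j=h_j/\|h_j\|_{\mL^2(\Om)}$ is well defined ($\herg$ being injective, $g_j\neq0$) and, from $\mathscr{F}=\herg^{\ast}T\herg$, $\mathscr{F}(k_j)g_j=\lambda_{\star}(k_j)g_j$ and $\|g_j\|_{\mL^2(\mathbb{S}^2)}=1$,
\[
(T(k_j)v_j,v_j)_{\Om}=\frac{\lambda_{\star}(k_j)}{\|h_j\|^2_{\mL^2(\Om)}},\qquad \lambda_{\star}(k_j)=(T(k_j)h_j,h_j)_{\Om},
\]
hence $\|h_j\|^2_{\mL^2(\Om)}\ge|\lambda_{\star}(k_j)|/\|T(k_j)\|$, while $\|h_j\|_{\mL^2(\Om)}\le\|\herg(k_j)\|$ is bounded by Proposition \ref{ContinuousDependancek}. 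Using $e^{i\delta}-1=2i\sin(\delta/2)e^{i\delta/2}$ gives $\lambda_{\star}(k_j)=\frac{4\pi}{k_j}\sin(\frac{\delta_{\star}(k_j)}{2})e^{i\delta_{\star}(k_j)/2}$, so $\Im m\,\lambda_{\star}(k_j)=\frac{4\pi}{k_j}\sin^2(\frac{\delta_{\star}(k_j)}{2})$ and $\Re e\,\lambda_{\star}(k_j)=\frac{2\pi}{k_j}\sin\delta_{\star}(k_j)$; the hypothesis $\delta_{\star}(k_j)\to2\pi$ (resp. $\to0$) then yields $\Im m\,\lambda_{\star}(k_j)\to0$ and $\Re e\,\lambda_{\star}(k_j)<0$ (resp. $>0$) for $j$ large. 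Passing to a subsequence I let $v_j\rightharpoonup v$ weakly in $\mL^2(\Om)$; writing $w_j$ for the solution of \eqref{eqwart} with data $v_j$ at wavenumber $k_j$, Proposition \ref{PropoEstimRegu} (with a constant uniform for $k$ near $k_0$) gives $\|w_j\|_{\mH^2(B_R)}\le C_R$ for every $R$, so after a further extraction $w_j\rightharpoonup w$ in $\mH^2_{\loc}(\R^3)$ and $w_j\to w$ in $\mL^2(\Om)$. Passing to the limit in $\Delta v_j+k_j^2n_b(k_j)v_j=0$ in $\Om$ (using $n_b(k_j)\to n_b(k_0)$ in $\mL^{\infty}(\Om)$) and in \eqref{eqwart} (a limit of radiating solutions being radiating) identifies $v\in\mrm{H}_{\mrm{inc}}$ for $k_0$, and $w$ as the solution of \eqref{eqwart} at $k_0$ with data $v$.

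The core estimate is $w^{\infty}=0$. From the relation above, the energy identity \eqref{identity1}, and $\|h_j\|^2_{\mL^2(\Om)}\ge|\lambda_{\star}(k_j)|/\|T(k_j)\|$,
\[
k_j\,\|w_j^{\infty}\|^2_{\mL^2(\mathbb{S}^2)}=\frac{4\pi\,\Im m\,\lambda_{\star}(k_j)}{\|h_j\|^2_{\mL^2(\Om)}}\le 4\pi\,\|T(k_j)\|\,\frac{\Im m\,\lambda_{\star}(k_j)}{|\lambda_{\star}(k_j)|}=4\pi\,\|T(k_j)\|\,\Big|\sin\frac{\delta_{\star}(k_j)}{2}\Big|\longrightarrow 0 ,
\]
since $\|T(k_j)\|$ is bounded (Proposition \ref{ContinuousDependancek}) and $\delta_{\star}(k_j)\to2\pi$ (resp. $\to0$); hence $w_j^{\infty}\to0$ in $\mL^2(\mathbb{S}^2)$. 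Because the far-field pattern depends continuously on the Cauchy data on a sphere enclosing $\Om$ through a representation formula of the type \eqref{RepresentationFF}, the convergence $w_j\rightharpoonup w$ forces $w_j^{\infty}\to w^{\infty}$; therefore $w^{\infty}=0$, so by Rellich $w=0$ in $\R^3\setminus\overline{\Om}$, $w|_{\Om}\in\mH^2_0(\Om)$, and \eqref{identity1} at $k_0$ gives $\Im m\,(T(k_0)v,v)_{\Om}=0$. By Proposition \ref{propoTET} it then only remains to prove $v\neq0$, after which $k_0$ is a TE of \eqref{NewITEP} and $(v,w|_{\Om})$ is a corresponding eigenpair.

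For $v\neq0$ I would invoke the splitting $T=A+K$ of Proposition \ref{propoCompact}$\,iii)$: $A$ is self-adjoint with $(A(k_j)v_j,v_j)_{\Om}=\frac{k_j^2}{4\pi}\int_{\Om}(n-n_b(k_j))|v_j|^2$ of the sign of $n-n_b$ and absolute value $\ge\frac{\alpha k_j^2}{4\pi}$, while $K(k_j)\to K(k_0)$ in operator norm with $K(k_0)$ compact. In the case $n-n_b(k_0)\ge\alpha>0$ (the case $n_b(k_0)-n\ge\alpha>0$ being symmetric, with the sign of $A$ reversed): since $\Re e\,(T(k_j)v_j,v_j)_{\Om}<0$ for $j$ large, the identity $\Re e\,(T(k_j)v_j,v_j)_{\Om}=(A(k_j)v_j,v_j)_{\Om}+\Re e\,(K(k_j)v_j,v_j)_{\Om}$ forces $\Re e\,(K(k_j)v_j,v_j)_{\Om}\le-\frac{\alpha k_j^2}{4\pi}$, and letting $j\to\infty$ (compactness of $K(k_0)$, $v_j\rightharpoonup v$) gives $\Re e\,(K(k_0)v,v)_{\Om}\le-\frac{\alpha k_0^2}{4\pi}<0$, so $v\neq0$. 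Then $k_0$ is a TE, $(v,w|_{\Om})$ is an eigenpair, and $(T(k_0)v,v)_{\Om}=0$ by Proposition \ref{propoTET}, i.e. $\Re e\,(K(k_0)v,v)_{\Om}=-(A(k_0)v,v)_{\Om}$. For the strong convergence I would then take $\limsup$ in $(A(k_j)v_j,v_j)_{\Om}=\Re e\,(T(k_j)v_j,v_j)_{\Om}-\Re e\,(K(k_j)v_j,v_j)_{\Om}$ (using $\Re e\,(T(k_j)v_j,v_j)_{\Om}<0$ for large $j$ and $\Re e\,(K(k_j)v_j,v_j)_{\Om}\to\Re e\,(K(k_0)v,v)_{\Om}$) to get $\limsup_j(A(k_j)v_j,v_j)_{\Om}\le(A(k_0)v,v)_{\Om}$, while weak lower semicontinuity of $v'\mapsto\int_{\Om}(n-n_b(k_0))|v'|^2$ gives the reverse inequality; hence $\int_{\Om}(n-n_b(k_j))|v_j|^2\to\int_{\Om}(n-n_b(k_0))|v|^2$. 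As $n-n_b(k_0)$ is bounded above and below by positive constants, this is convergence of the squared norm of $v_j$ in a Hilbert space with the topology of $\mL^2(\Om)$, which combined with $v_j\rightharpoonup v$ yields $v_j\to v$ strongly in $\mL^2(\Om)$ (with $\|v\|_{\mL^2(\Om)}=1$), finishing the proof.

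The hardest point is that the range of $\herg$ is not closed, so one cannot simply choose $g$ with $\herg g=v$ and must work with the normalised sequence $v_j$; in particular $\|h_j\|_{\mL^2(\Om)}$ may tend to $0$, and the whole argument balances the vanishing rates of $\Re e\,\lambda_{\star}(k_j)$, $\Im m\,\lambda_{\star}(k_j)$ and $\|h_j\|^2$ through $\|h_j\|^2\ge|\lambda_{\star}(k_j)|/\|T(k_j)\|$ and the sign of $\Re e\,\lambda_{\star}(k_j)$ imposed by the hypothesis on $\delta_{\star}$. The second delicate step, upgrading $v_j\rightharpoonup v$ to strong convergence, is only possible once $k_0$ is known to be a TE, via the identity $(T(k_0)v,v)_{\Om}=0$ combined with the coercive splitting; one must also take care that all $k$-dependent objects ($\herg(k)$, $T(k)$, $A(k)$, $K(k)$, $n_b(k)$, and the far-field map) converge as $k_j\to k_0$, which is exactly what Propositions \ref{PropoEstimRegu} and \ref{ContinuousDependancek} and the standing smoothness assumption on $k\mapsto n_b(k)$ provide.
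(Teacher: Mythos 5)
Your proof is correct, and it reorganizes the argument in a way that genuinely differs from the paper's. The paper splits the statement in two: it first proves that $k_0$ is a TE by contradiction, introducing the auxiliary normalisation $\psi_j=\herg(k_j)g_j/\sqrt{|\lambda_{\star}(k_j)|}$, using that if $k_0$ were not a TE then $T(k_0)$ would be coercive on $\mrm{H}_{\mrm{inc}}(k_0)$ (hence $T(k_j)$ uniformly coercive, hence $(\psi_j)$ bounded with weak limit $\psi_0$ necessarily zero), and then contradicting the sign of $((n-n_b(k_j))\psi_j,\psi_j)_{\Om}$ against the limit $(T(k_j)\psi_j,\psi_j)_{\Om}\to-1$; only afterwards does it treat the $\mL^2$-normalised sequence $(v_j)$. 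You instead work from the start with $v_j=h_j/\|h_j\|_{\mL^2(\Om)}$, extract the two pieces of information in the hypothesis separately --- $\Im m\,\lambda_{\star}/|\lambda_{\star}|=\sin(\delta_{\star}/2)\to0$ to kill the far field via $\|h_j\|^2\ge|\lambda_{\star}|/\|T(k_j)\|$, and $\Re e\,\lambda_{\star}<0$ to fix a sign --- and then prove $v\neq0$ \emph{directly} from the splitting $T=A+K$: the inequality $\Re e\,(K(k_j)v_j,v_j)_{\Om}\le-(A(k_j)v_j,v_j)_{\Om}\le-\tfrac{\alpha k_j^2}{8\pi}$ (note $\alpha$ should be $\alpha/2$ for large $j$) survives the weak limit because $K(k_0)$ is compact and $k\mapsto K(k)$ is norm continuous, forcing $\Re e\,(K(k_0)v,v)_{\Om}<0$. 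This avoids the contradiction argument and the uniform-coercivity lemma on the $k$-dependent spaces $\mrm{H}_{\mrm{inc}}(k)$ altogether, at the cost of invoking norm continuity of $A(k)$ and $K(k)$ separately (which does follow from Proposition \ref{ContinuousDependancek} and the smoothness of $k\mapsto n_b(k)$). Your route to $\Im m\,(T(k_0)v,v)_{\Om}=0$ --- via the far-field representation formula and strong trace convergence --- replaces the paper's use of compactness of $\herg(k)^{\ast}T(k)$ in the energy identity, but is equivalent. The strong-convergence step at the end is the paper's argument rewritten in terms of $A$ and $K$ instead of the explicit weighted integrals; both hinge on the same identity $((n-n_b(k_0))(v+w),v)_{\Om}=0$, which in your notation reads $\Re e\,(K(k_0)v,v)_{\Om}=-(A(k_0)v,v)_{\Om}$. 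Overall your version is arguably more unified, since the nonvanishing of $v$ and the TE conclusion come out of the same computation.
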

\begin{remark}
The additional information on eigenfunctions given in Theorem \ref{TheoPhase1}  can be helpful in solving the inverse spectral problem of determining $n$ from TEs of (\ref{NewITEP}).
\end{remark}
\begin{proof}
Let us prove the first part of the statement. We consider only the case $ n-n_b(k_0) \ge \alpha >0$.  The case $n_b(k_0)- n \ge \alpha >0$ follows from the same arguments replacing $\mathscr{F}(k)$ with $-\mathscr{F}(k)$. Considering $j$ sufficiently large we can assume  that 
$$
 n-n_b(k_j) \ge \alpha/2 >0.
$$
Set
$$
\psi_j := \frac{\herg(k_j) g_{j}}{\sqrt{|\lambda_{\star}(k_j)|}} \in\mL^2(\Om).
$$
The sequence $(\psi_j)$ satisfies, according to the assumptions and the factorisation \eqref{ModifiedFacto}, 
\begin{equation} \label{fedi1}
({T(k_j) \psi_j},{ \psi_j})_{\Om} = \cfrac{\lambda_{\star}(k_j)}{|\lambda_{\star}(k_j)|}\,(g_{j}
, g_{j})_{\mathbb{S}^{2}}= \cfrac{\lambda_{\star}(k_j)}{|\lambda_{\star}(k_j)|} \underset{j\to+\infty}{\to} -1.
\end{equation}
Using Proposition \ref{propoCompact} and working by contradiction, one can verify that if $k_0$ is not a TE of (\ref{NewITEP}), then $T(k_0)$ is coercive in $\mrm{H}_{\mrm{inc}}(k_0)$. Introduce $P(k):\mL^2(\Om)\to\mrm{H}_{\mrm{inc}}(k)$ the projection in $\mrm{H}_{\mrm{inc}}(k)$ for the inner product of $\mL^2(\Om)$. Using that the maps $k \mapsto P(k)$ and $k \mapsto T(k)$ (Proposition \ref{ContinuousDependancek}) are continuous in the operator norm, we deduce that the $T(k_j)$ are uniformly coercive in $\mrm{H}_{\mrm{inc}}(k)$ for $j$ sufficiently large. Identity \eqref{fedi1} then shows that the sequence $(\psi_j)$ is bounded in $\mL^2(\Om)$ and consequently,  up to a subsequence, one can assume that $(\psi_j)$ weakly converges to some $\psi_0$ in $\mL^2(\Om)$. Since $\psi_j\in\mrm{H}_{\mrm{inc}}(k_j)$ for all $j\in\N$, the weak limit satisfies in the sense of distributions 
$$
\Delta \psi_0 + k_0^2n_b(k_0)\psi_0 =0 \mbox{ in } \Om, 
$$
meaning that  $\psi_0 \in \mrm{H}_{\mrm{inc}}(k_0)$. Let us denote by
$w_j \in\mH^2_\mathrm{loc}(\R^3)$ (resp. $w_0 \in\mH^2_\mathrm{loc}(\R^3)$) the solution of \eqref{eqwart} with $v=\psi_j$, $k=k_{j}$ (resp. $v=\psi_0$, $k=k_{0}$). We recall from \eqref{identity1} that
\begin{equation} \label{fedi2}
4\pi\,\Im m\, (T(k_j)\psi_j, \psi_j)_{\Om} =  k_j \int_{\mathbb{S}^{2}} |\herg(k_j)^{\ast}T(k_j)\psi_j|^2 ds.
\end{equation}
Since $\herg(k_j)^{\ast}T(k_j):\mL^2(\Om)\to\mL^2(\mathbb{S}^2)$ is compact (because $T(k_j):\mL^2(\Om)\to\mL^2(\Om)$ is continuous and because $\herg(k_j)^{\ast}:\mL^2(\Om)\to\mL^2(\mathbb{S}^2)$ is compact) and since $k\mapsto \herg(k)^{\ast}T(k)$ is continuous in the operator norm, we deduce that
\[
\begin{array}{l}
4\pi\,\Im m\, (T(k_j)\psi_j, \psi_j)_{\Om} =  \dsp k_j \int_{\mathbb{S}^{2}} |\herg(k_j)^{\ast}T(k_j)\psi_j|^2 ds\\[10pt]
\hspace{3cm}\dsp\underset{j\to+\infty}{\to} k_0 \int_{\mathbb{S}^{2}} |(\herg(k_0))^{\ast}T(k_0)\psi_0|^2 ds=4\pi\,\Im m\, (T(k_0)\psi_0, \psi_0)_{\Om}.
\end{array}
\]
From \eqref{fedi1} then we get $\Im m\, (T(k_0)\psi_0, \psi_0)_{\Om} =0$ and therefore
$(\psi_0 , w_0)$ is a solution of the interior transmission problem \eqref{NewITEP} for $k=k_0$.  The hypothesis on $k_0$ implies $\psi_0 =0$. Using the definition of $T(k)$ we have 
$$
 \frac{k^2_j }{4\pi} ( (n-n_b(k_j)) \psi_j,{ \psi_j})_{\Om} =  ({ T(k_j)
  \psi_j},{ \psi_j})_{\Om}  - \frac{k^2_j }{4\pi} ( (n-n_b(k_j)) \psi_j,{ w_j})_{\Om}
$$
where $( (n-n_b(k_j)) \psi_j,{ w_j})_{\Om} \to ( (n-n_b(k_0)) \psi_0,{
  w_0})_{\Om}$ when $j\to+\infty$. The latter property is a consequence of Proposition \ref{PropoEstimRegu} and the fact that $\mH^2(\Om)$ is compactly embedded in $\mL^2(\Om)$. Consequently 
$$
 0 \le \frac{k^2_j }{4\pi} ( (n-n_b(k_j)) \psi_j,{ \psi_j})_{\Om} \underset{j\to+\infty}{\to} -1
$$
which is a contradiction.\\
\newline
We now proceed with the proof of the second part of the theorem related to the convergence of the sequence $(v_j)$. Let $v$ be the weak limit in $\mL^2(\Om)$ of a subsequence of $(v_j)$. Note that this limit exists because $\|v_j\|_{\mL^2(\Om)}=1$ for all $j\in\N$. To simplify, the subsequence is also denoted $(v_j)$. We have 
\begin{equation} \label{fedi3}
({ T(k_j) v_j},{ v_j})_{\Om} = \theta_j  \,\cfrac{\lambda_{\star}(k_j)}{|\lambda_{\star}(k_j)|} 
\end{equation}
with $\theta_j:=|\lambda_{\star}(k_j)|/{\|\herg(k_j) g_{j} \|^2_{\mL^2(\Om)}}$. Since there holds
\[
\theta_j=\cfrac{|\lambda_{\star}(k_j)|}{\|\herg(k_j) g_{j} \|^2_{\mL^2(\Om)}}=\cfrac{(T(k_{j})\herg(k_j) g_{j},\herg(k_j) g_{j})_{\Om}}{\|\herg(k_j) g_{j} \|^2_{\mL^2(\Om)}},
\]
using that $k\mapsto T(k)$ is continuous in the operator norm, we infer that $(\theta_j)$ is bounded. Therefore, up to changing the subsequence, one can assume that $\lim_{j\to+\infty}\theta_j =\theta_0\ge0$. Observing from (\ref{fedi3}) that 
$$
\lim_{j\to+\infty}\Im m\,({ T(k_j) v_j},{ v_j})_{\Om} =0
$$
and using the same arguments as above we conclude that the pair $(v, w)$ solves the problem \eqref{NewITEP} for $k=k_0$, $w$ being the solution of \eqref{eqwart} with $k=k_0$. Now we prove that $v \not\equiv 0$ and that $v$ is the strong limit of the sequence $(v_j)$ in $\mL^2(\Om)$. We start again from the identity
$$
  ( (n-n_b(k_j)) v_j,{ v_j})_{\Om} =  \frac{4\pi}{k^2_j } ({ T(k_j)
  v_j},{ v_j})_{\Om}  - ( (n-n_b(k_j)) v_j,{ w_j})_{\Om}.
$$
Formula (\ref{fedi3}) ensures that the sequence $(4\pi k^{-2}_j ({ T(k_j) v_j},{ v_j})_{\Om})$ (up to extraction of a subsequence) converges to a non positive real number. We deduce that 
\begin{equation}\label{identityITEP}
 \limsup_{j\to+\infty}\ ( (n-n_b(k_j)) v_j,{ v_j})_{\Om} \le   - ( (n-n_b(k_0)) v, w)_{\Om}
\end{equation}
where we used the strong convergence of $w_j$ to $w$ in $\mL^2(\Om)$ (again consequence of Proposition \ref{PropoEstimRegu}) and the continuity of $n_b(k)$ with respect to $k$. One easily observes that since the pair $(v, w)$ solves \eqref{NewITEP}, we have
\[
( (n-n_b(k_0))(v+w), v)_{\Om} =0.
\]
Using this identity in (\ref{identityITEP}), we get 
$$
 \limsup_{j\to+\infty}\ ( (n-n_b(k_j)) v_j,{ v_j})_{\Om} \le   ( (n-n_b(k_0)) v, v)_{\Om}.
$$
We then obtain, since $( (n_b(k_j)-n_b(k_0)) v, v)_{\Om} \to 0$ as $j \to \infty$,
$$
  \limsup_{j\to+\infty}\ ( (n-n_b(k_j))(v_j-v),{ v_j}-v)_{\Om} \le  0,
$$
which is enough to conclude that $(v_j)$ converges to $v$ in $\mL^2(\Om)$. Since $\|v_{j}\|_{\mL^2(\Om)}=1$ for all $j\in\N$, we have $v\not\equiv0$.
\end{proof}

\begin{remark}
In Theorem \ref{TheoPhase1}, one can replace $k$ with $\eps$ and fix $k=k_0$ where $\eps$ is a parameter such that $T_\eps$ converges to $T_0$ in operator norm. This can be interesting for example in a situation where $T_\eps$ is associated with a coefficient $n_\eps$ such that $\| n_\eps - n\|_{\mL^\infty(\R^3)} \to 0$. Unfortunately, this does not cover situations where we have only $\| n_\eps - n\|_{\mL^2(\R^3)} \to 0$ which would allows us for example to deal with the interesting case with two separated inclusions touching at the limit.
\end{remark}
\noindent Theorem \ref{TheoPhase1} only gives a sufficient condition to ensure that a value $k_0$ is a TE. In general, it is hard to establish that for all TEs, we have a sequence of frequencies $(k_{j})$ converging to $k_0$ and such that $\lim_{j\to+\infty}\delta_{\star}(k_j)=2\pi$ (resp. $\lim_{j\to+\infty}\delta_{\star}(k_j)=0$) when $n-n_b \ge \alpha >0$ (resp. $n_b-n \ge \alpha >0$)  in $\Om$. For example, in \cite{KiLe13}, where the case $n_b=1$ is treated, this is proved only for the first eigenvalue under quite restrictive assumptions on the index material $n$ which has to be constant and large or small enough. 
We shall prove in the sequel that for artificial backgrounds that satisfies \eqref{choixdenb}  we have the above characterisation for all TEs leading to an ``if and only if statement''.

\subsection{Artificial backgrounds leading to a necessary condition}
In order to prove the converse statement of Theorem \ref{TheoPhase1}, we first introduce some material. If $k$ is not a TE, $\mathscr{F}$ is injective and has dense range in $\mL^2(\mathbb{S}^2)$. In that case, $1$ is not an eigenvalue of $\mathscr{S}$. Following \cite{KiLe13}, we denote $\mathfrak{S}$ the Cayley transform of $\mathscr{S}$ defined by 
\[
\mathfrak{S}=i(\mrm{Id}+\mathscr{S})(\mrm{Id}-\mathscr{S})^{-1}:R(\mathscr{F})\subset \mL^2(\mathbb{S}^2)\to\mL^2(\mathbb{S}^2).
\]
Here $R(\mathscr{F})$ stands for the range of $\mathscr{F}$. The operator $\mathfrak{S}$ is selfadjoint and its spectrum is discrete. Moreover $e^{i\delta_j}$, with $\delta_j\in(0;2\pi)$, is an eigenvalue of $\mathscr{S}$ if and only if $-\cot(\delta_j/2)\in\R$ is an eigenvalue of $\mathfrak{S}$ (see the connection between the quantities in Figure \ref{Illustration2}).\\

\begin{figure}[!ht]
\centering
\begin{tikzpicture}[scale=1]
\begin{scope}
    \clip(-2.1,-2) rectangle (7,2);
\draw[->] (-0.2,0) -- (8,0);
\draw[->] (0,-3) -- (0,4.2);
\draw[dotted,gray,line width=0.3mm] ({6.5*exp(-5/4)},0) -- ({6.5*exp(-5/4)},-0.7415375);
\draw[dotted,gray,line width=0.3mm] (0,-0.7415375) -- ({6.5*exp(-5/4)},-0.7415375);
\draw[dotted,gray,line width=0.3mm] (0,{-cos((6.5*exp(-1/4))*57.3/2)/sin((6.5*exp(-1/4))*57.3/2)}) -- ({6.5*exp(-1/4)},{-cos((6.5*exp(-1/4))*57.3/2)/sin((6.5*exp(-1/4))*57.3/2)});
\draw[dotted,gray,line width=0.3mm] ({6.5*exp(-1/4)},0) -- ({6.5*exp(-1/4)},1.43414615);
\draw[gray,line width=0.3mm] plot[domain=0.5:5.6,samples=100] (\x,{-cos(\x*57.3/2)/sin(\x*57.3/2)});
\draw (6.28,-0.1)--(6.28,0.1);
\node at({6.5*exp(-1/4)},-0.4){$\delta_{\star}$};
\node at(-1.2,1.4){$-\cot(\delta_{\star}/2)$};
\node at({6.5*exp(-5/4)},0.4){$\delta_{j}$};
\node at(-1.2,-0.76){$-\cot(\delta_{j}/2)$};
\node at(-0.2,-0.2){$0$};
\node at(6.28,-0.4){$2\pi$};
\foreach \x in {1,...,20}
\draw[draw=blue,thick] ({6.5*exp(-\x/4)},{-0.1})--({6.5*exp(-\x/4)},{0.1});
\foreach \x in {1,...,20}
\pgfmathtruncatemacro{\tmp}{110/\x}
\draw[draw=red,thick] ({-0.1},{-cos((6.5*exp(-\x/4))*57.3/2)/sin((6.5*exp(-\x/4))*57.3/2)})--({0.1},{-cos((6.5*exp(-\x/4))*57.3/2)/sin((6.5*exp(-\x/4))*57.3/2)});
\end{scope}
\end{tikzpicture}
\caption{Connection between the phases $\delta_j$ of the eigenvalues of $\mathscr{S}$ and the eigenvalues of $\mathfrak{S}$ (the $-\cot(\delta_j/2)$). Here the representation corresponds to a situation where $n-n_b\ge\alpha>0$ in $\Om$. \label{Illustration2}}
\end{figure}
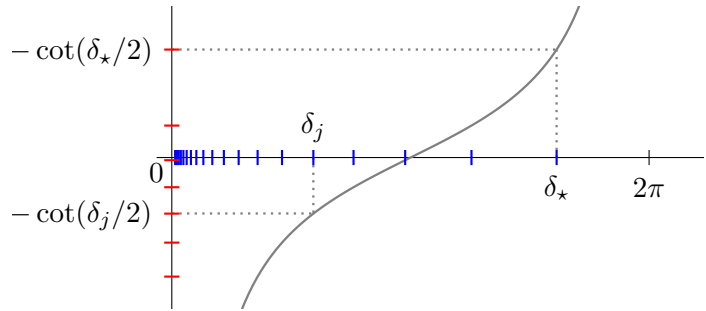
\noindent We now recall the following result from \cite[Lemma 4.3]{KiLe13}. We here reproduce its short and elegant proof for the reader's convenience. Since this result holds for fixed $k$, we do not indicate the dependence on $k$ in the notation.
\begin{proposition}\label{PropoInfSup}
Assume that $k>0$ is not a TE of (\ref{NewITEP}).\\[4pt]
$i)$ Assume that we have $n-n_b \ge \alpha >0$ in $\Om$. Then 
\begin{equation}\label{quotient1}
\cot\cfrac{\delta_{\star}}{2} = \inf_{\varphi\in\mrm{H}_{\mrm{inc}}}\cfrac{\Re e\,(T\varphi,\varphi)_{\Om}}{\Im m\,(T\varphi,\varphi)_{\Om}}.
\end{equation}
$ii)$ Assume that we have $n_b-n \ge \alpha >0$ in $\Om$. Then 
\begin{equation}\label{quotient2}
\cot\cfrac{\delta_{\star}}{2} = \sup_{\varphi\in\mrm{H}_{\mrm{inc}}}\cfrac{\Re e\,(T\varphi,\varphi)_{\Om}}{\Im m\,(T\varphi,\varphi)_{\Om}}.
\end{equation}
We emphasize that the denominators in (\ref{quotient1}), (\ref{quotient2}) do not vanish when $k>0$ is not a TE of (\ref{NewITEP}) (see Section \ref{sectionFirstCharac}).
\end{proposition}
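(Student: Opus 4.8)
\noindent\textit{Sketch of proof.}
The plan is to characterise $-\cot(\delta_{\star}/2)$ as the extreme eigenvalue of the self-adjoint Cayley transform $\mathfrak{S}$ through a Rayleigh quotient, to rewrite that quotient in terms of $\mathscr{F}$, and finally to transport it to $T$ on $\mrm{H}_{\mrm{inc}}$ via the factorisation \eqref{ModifiedFacto}.

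\textbf{Step 1 (Rayleigh quotient of $\mathfrak{S}$ in terms of $\mathscr{F}$).} Since $k$ is not a TE, $\mathscr{F}$ is injective, so every $h\in R(\mathscr{F})\setminus\{0\}$ writes $h=(\mrm{Id}-\mathscr{S})g$ for a unique $g\in\mL^2(\mathbb{S}^2)\setminus\{0\}$, and $g$ describes $\mL^2(\mathbb{S}^2)\setminus\{0\}$ as $h$ describes $R(\mathscr{F})\setminus\{0\}$. Using $\mathscr{S}=\mrm{Id}+\tfrac{ik}{2\pi}\mathscr{F}$ one gets $\mathfrak{S}h=i(\mrm{Id}+\mathscr{S})g$ and, expanding,
\[
(\mathfrak{S}h,h)_{\mathbb{S}^2}=-\frac{k}{\pi}\,\Re e\,(\mathscr{F}g,g)_{\mathbb{S}^2}+i\Big(\frac{k}{\pi}\,\Im m\,(\mathscr{F}g,g)_{\mathbb{S}^2}-\frac{k^2}{4\pi^2}\,\|\mathscr{F}g\|_{\mathbb{S}^2}^2\Big),\qquad \|h\|_{\mathbb{S}^2}^2=\frac{k^2}{4\pi^2}\|\mathscr{F}g\|_{\mathbb{S}^2}^2.
\]
The unitarity of $\mathscr{S}$ is equivalent to $\tfrac{ik}{2\pi}(\mathscr{F}-\mathscr{F}^{\ast})+\tfrac{k^2}{4\pi^2}\mathscr{F}^{\ast}\mathscr{F}=0$; testing this against $g$ yields $\Im m\,(\mathscr{F}g,g)_{\mathbb{S}^2}=\tfrac{k}{4\pi}\|\mathscr{F}g\|_{\mathbb{S}^2}^2$ (in particular $>0$ for $g\neq0$, consistently with Section~\ref{sectionFirstCharac}). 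Substituting makes the imaginary part vanish and leaves
\[
\frac{(\mathfrak{S}h,h)_{\mathbb{S}^2}}{\|h\|_{\mathbb{S}^2}^2}=-\,\frac{\Re e\,(\mathscr{F}g,g)_{\mathbb{S}^2}}{\Im m\,(\mathscr{F}g,g)_{\mathbb{S}^2}}.
\]

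\textbf{Step 2 (which end of $\sigma(\mathfrak{S})$).} The eigenvalues of $\mathfrak{S}$ are the $-\cot(\delta_j/2)$. By Proposition~\ref{PropoOutsideTEs} the $\delta_j$ accumulate only at $0$ when $n-n_b\ge\alpha>0$ and only at $2\pi$ when $n_b-n\ge\alpha>0$; with the monotone ordering fixed before the statement and $\delta_{\star}=\delta_1$, in the first case $\mathfrak{S}$ is bounded above with largest eigenvalue $-\cot(\delta_{\star}/2)$, and in the second case it is bounded below with smallest eigenvalue $-\cot(\delta_{\star}/2)$. By the spectral theorem for self-adjoint operators, $-\cot(\delta_{\star}/2)=\sup_{h\in R(\mathscr{F})\setminus\{0\}}(\mathfrak{S}h,h)_{\mathbb{S}^2}/\|h\|_{\mathbb{S}^2}^2$ in the first case, and the same with $\inf$ in the second. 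Combining with Step~1 gives $\cot(\delta_{\star}/2)=\inf_{g\ne0}\Re e\,(\mathscr{F}g,g)_{\mathbb{S}^2}/\Im m\,(\mathscr{F}g,g)_{\mathbb{S}^2}$ in case $i)$, and the same with $\sup$ in case $ii)$.

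\textbf{Step 3 (transport to $T$ and $\mrm{H}_{\mrm{inc}}$).} By \eqref{ModifiedFacto}, $(\mathscr{F}g,g)_{\mathbb{S}^2}=(THg,Hg)_{\Om}$, and $H$ is injective, so the $\inf$/$\sup$ over $g\ne0$ equals the $\inf$/$\sup$ over $\varphi\in H(\mL^2(\mathbb{S}^2))\setminus\{0\}$ of $\Re e\,(T\varphi,\varphi)_{\Om}/\Im m\,(T\varphi,\varphi)_{\Om}$. Since $k$ is not a TE, $\Im m\,(T\varphi,\varphi)_{\Om}>0$ for all $\varphi\in\mrm{H}_{\mrm{inc}}\setminus\{0\}$ (Proposition~\ref{propoTET} together with \eqref{identity1}), hence this quotient is continuous on $\mrm{H}_{\mrm{inc}}\setminus\{0\}$; as $H(\mL^2(\mathbb{S}^2))$ is dense in $\mrm{H}_{\mrm{inc}}$, the $\inf$/$\sup$ over $H(\mL^2(\mathbb{S}^2))\setminus\{0\}$ equals that over $\mrm{H}_{\mrm{inc}}\setminus\{0\}$, which is \eqref{quotient1}, resp. \eqref{quotient2}.

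The delicate points are the algebra of Step~1 — notably checking that the imaginary part of $(\mathfrak{S}h,h)$ cancels, which is exactly what produces the clean ratio $-\Re e/\Im m$ — and the bookkeeping in Step~2 of which extreme eigenvalue of $\mathfrak{S}$ equals $-\cot(\delta_{\star}/2)$ under each sign hypothesis, where Proposition~\ref{PropoOutsideTEs} and the chosen ordering of the $\delta_j$ are essential. The density/continuity passage in Step~3 is routine.
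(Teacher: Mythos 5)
Your proof is correct and follows essentially the same route as the paper: pass to the self-adjoint Cayley transform $\mathfrak{S}$, identify $-\cot(\delta_{\star}/2)$ as the extreme point of its Rayleigh quotient via Courant--Fischer, rewrite the quotient as $-\Re e\,(\mathscr{F}g,g)/\Im m\,(\mathscr{F}g,g)$ using the unitarity of $\mathscr{S}$, and transport it to $T$ on $\mrm{H}_{\mrm{inc}}$ through the factorisation $\mathscr{F}=\herg^{\ast}T\herg$ and the density of $R(\herg)$. The only cosmetic differences are that you encode unitarity as the operator identity $\Im m\,(\mathscr{F}g,g)=\tfrac{k}{4\pi}\|\mathscr{F}g\|^2$ rather than via $\|\mathscr{S}g\|=\|g\|$, and you spell out the boundedness-above/below of $\mathfrak{S}$ and the density argument that the paper leaves implicit.
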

\begin{proof}
Let us consider the situation $n-n_b \ge \alpha >0$ in $\Om$. Since $\mathfrak{S}$ is selfadjoint, we can apply the Courant-Fischer $\inf$-$\sup$ principle to get
\[
\begin{array}{lcl}
-\cot\cfrac{\delta_{\star}}{2} &=& \dsp\sup_{f\in R(\mathscr{F})}\cfrac{(\mathfrak{S}f,f)_{\mathbb{S}^2}}{\|f\|_{\mL^2(\mathbb{S}^2)}}\ =\ \sup_{f\in R(\mathscr{F})}\cfrac{(i(\mrm{Id}+\mathscr{S})(\mrm{Id}-\mathscr{S})^{-1}f,f)_{\mathbb{S}^2}}{\|f\|_{\mL^2(\mathbb{S}^2)}}\\[14pt]
& = & \dsp\sup_{g\in \mL^2(\mathbb{S}^2)}\cfrac{(i(\mrm{Id}+\mathscr{S})g,(\mrm{Id}-\mathscr{S})g)_{\mathbb{S}^2}}{\|(\mrm{Id}-\mathscr{S})g\|_{\mL^2(\mathbb{S}^2)}}\\[14pt]
& = & \dsp\sup_{g\in \mL^2(\mathbb{S}^2)}\cfrac{i(\|g\|^2_{\mL^2(\mathbb{S}^2)}+2i\Im m\,(\mathscr{S}g,g)_{\mathbb{S}^2}-\|\mathscr{S}g\|^2_{\mL^2(\mathbb{S}^2)})}{\|g\|^2_{\mL^2(\mathbb{S}^2)}-2\Re e\,(\mathscr{S}g,g)_{\mathbb{S}^2}+\|\mathscr{S}g\|^2_{\mL^2(\mathbb{S}^2)}}\\[14pt]
& = & \dsp\sup_{g\in \mL^2(\mathbb{S}^2)}\cfrac{\Im m\,(\mathscr{S}g,g)_{\mathbb{S}^2}}{\Re e\,(\mathscr{S}g,g)_{\mathbb{S}^2}-\|g\|^2_{\mL^2(\mathbb{S}^2)}}.
\end{array}
\]
Using the fact that $\mathscr{S}=\mrm{Id}+\frac{ik}{2\pi}\mathscr{F}$ and that $\mathscr{F}=\herg^{\ast}T\herg$ (Proposition \ref{PropFunitary}), we can write
\[
-\cot\cfrac{\delta_{\star}}{2} = \dsp\sup_{g\in \mL^2(\mathbb{S}^2)}\cfrac{\Re e\,(\mathscr{F}g,g)_{\mathbb{S}^2}}{-\Im m\,(\mathscr{F}g,g)_{\mathbb{S}^2}}= \dsp\sup_{g\in \mL^2(\mathbb{S}^2)}\cfrac{\Re e\,(T\herg g,\herg g)_{\mathbb{S}^2}}{-\Im m\,(T\herg g,\herg g)_{\mathbb{S}^2}}=\dsp\sup_{\varphi\in\mrm{H}_{\mrm{inc}}}\cfrac{\Re e\,(T\varphi,\varphi)_{\Om}}{-\Im m\,(T\varphi,\varphi)_{\Om}}.
\]
This proves (\ref{quotient1}). When $n_b-n\ge\alpha>0$ in $\Om$, the identity (\ref{quotient2}) can be shown in a similar way.
\end{proof}
\noindent To continue the analysis, we use again the $k$-dependent notation. If $k_0$ is a TE of (\ref{NewITEP}), denote $(v,w)\in\mL^2(\Om)\times\mH^2_0(\Om)$ a corresponding eigenpair. Then $v$ belongs to $\mrm{H}_{\mrm{inc}}(k_0)$ and from Proposition \ref{propoTET}, we know that we have $(T(k_0)v,v)_{\Om}=0$. Now we compute a Taylor expansion of $k\mapsto(T(k)v,v)_{\Om}$ as $k\to k_0$. This will be useful to assess the right hand sides of (\ref{quotient1}), (\ref{quotient2}). 

\begin{proposition} \label{TheoPhase3} 
Assume that $n_b$ satisfies \eqref{choixdenb}, i.e. $n_b(k) = \rho/k^2$ in $\Om$ with $\rho \in \R$ independent from $k$.
Let $k_0>0$ be a TE of (\ref{NewITEP}) and $(v_0,w_0)\in\mL^2(\Om)\times\mH^2_0(\Om)$ an associated eigenpair. Then there is $\eps>0$ such that we have the expansion
\begin{equation}\label{DesiredExpansion}
4\pi (T(k) v_0, v_0)_{\Om} = 0+2k_0(k-k_0) (n  (v_0 + w_0), (v_0 + w_0))_{\Om}  + (k-k_0)^2 \eta(k),
\end{equation}
where the remainder $\eta(k)$ satisfies $|\eta(k)| \le C \|v_0\|^2_{\mL^2(\Om)}$ with $C>0$ independent from $k\in[k_0-\eps;k_0+\eps]$.
\end{proposition}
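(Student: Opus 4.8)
The plan is to reduce $4\pi(T(k)v_0,v_0)_\Om$ to an $\mL^2(\Om)$ integral, to isolate the contribution of the increment of the scattered field as $k$ moves away from $k_0$, and to exploit that for the choice \eqref{choixdenb} this increment is, up to a scalar factor, a single $k$-independent scattering profile.

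First I record the consequences of \eqref{choixdenb}. Since $k^2 n_b(k)=\rho$ in $\Om$, the space $\mrm{H}_{\mrm{inc}}(k)=\{\varphi\in\mL^2(\Om);\,\Delta\varphi+\rho\varphi=0\mbox{ in }\Om\}$ does not depend on $k$, so $v_0$ satisfies $\Delta v_0+\rho v_0=0$ in $\Om$; moreover $k^2(n_b(k)-n)=\rho-k^2n$ in $\Om$. Put $u_0:=v_0+w_0$. From \eqref{NewITEP} at $k_0$ one gets $\Delta w_0=\rho v_0-k_0^2nu_0$ and hence $\Delta u_0+k_0^2nu_0=0$ in $\Om$, with $w_0\in\mH^2_0(\Om)$. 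Extending $w_0$ by zero produces a function of $\mH^2(\R^3)$ which, by uniqueness, is exactly the solution $w(k_0)$ of \eqref{eqwart} for $v=v_0$, $k=k_0$; in particular $(T(k_0)v_0,v_0)_\Om=0$, as already known from Proposition \ref{propoTET}. Using \eqref{defTtC} I then write, for $v=v_0$ and $k$ near $k_0$,
\[
4\pi(T(k)v_0,v_0)_\Om=\int_\Om(k^2n-\rho)\,(v_0+w(k))\,\overline{v_0}\,dx ,
\]
$w(k)$ being the solution of \eqref{eqwart} with $v=v_0$.

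Next I subtract the equations satisfied by $w(k)$ and by the zero--extension of $w_0$. Thanks to \eqref{choixdenb} the $\rho$--terms cancel, and $z(k):=w(k)-w_0$ solves $\Delta z(k)+k^2n\,z(k)=-(k^2-k_0^2)\,n\,u_0$ in $\R^3$ with the radiation condition, the source $nu_0$ being \emph{fixed} and compactly supported. By linearity and uniqueness, $z(k)=(k^2-k_0^2)\,\zeta_k$, where $\zeta_k\in\mH^2_\mathrm{loc}(\R^3)$ is the outgoing solution of $\Delta\zeta_k+k^2n\,\zeta_k=-nu_0$; write $z_1:=\zeta_{k_0}$, so $\Delta z_1+k_0^2n z_1=-nu_0$. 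As in the proof of Proposition \ref{PropoEstimRegu} the solution operator $\mL^2(\Om)\to\mH^2(\Om)$ of this scattering problem is bounded uniformly for $k$ in a fixed interval around $k_0$, and the Lippmann--Schwinger representation used in Proposition \ref{ContinuousDependancek} shows $k\mapsto\zeta_k$ is locally Lipschitz into $\mH^2(\Om)$; thus $\|z_1\|_{\mH^2(\Om)}\le C\|v_0\|_{\mL^2(\Om)}$ and $\|\zeta_k-z_1\|_{\mL^2(\Om)}\le C|k-k_0|\,\|v_0\|_{\mL^2(\Om)}$ for $k\in[k_0-\eps,k_0+\eps]$. Substituting $w(k)=w_0+(k^2-k_0^2)\zeta_k$ in the integral above, expanding $(k^2n-\rho)\zeta_k$ around $k_0$ and using $(T(k_0)v_0,v_0)_\Om=0$, every discarded contribution carries an extra factor $|k-k_0|$; since $k^2-k_0^2=O(k-k_0)$ this yields
\[
4\pi(T(k)v_0,v_0)_\Om=(k^2-k_0^2)\Big[(nu_0,v_0)_\Om+\big((k_0^2n-\rho)z_1,v_0\big)_\Om\Big]+(k-k_0)^2\eta_1(k),\qquad|\eta_1(k)|\le C\|v_0\|^2_{\mL^2(\Om)} .
\]

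It remains to identify the bracket with $(nu_0,u_0)_\Om$; this is the only genuinely computational step, and I would keep $v_0$ away from every integration by parts. Writing $nu_0=-\Delta z_1-k_0^2n z_1$ in $\Om$ and integrating twice by parts against $w_0\in\mH^2_0(\Om)$ (no boundary terms), then using $\Delta\overline{w_0}=\rho\overline{v_0}-k_0^2n\overline{u_0}$ and $u_0-w_0=v_0$, one finds $(nu_0,w_0)_\Om=\big((k_0^2n-\rho)z_1,v_0\big)_\Om$, i.e. $(nu_0,v_0)_\Om+\big((k_0^2n-\rho)z_1,v_0\big)_\Om=(nu_0,u_0)_\Om$. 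Hence $4\pi(T(k)v_0,v_0)_\Om=(k^2-k_0^2)(nu_0,u_0)_\Om+(k-k_0)^2\eta_1(k)$; writing $k^2-k_0^2=2k_0(k-k_0)+(k-k_0)^2$ and setting $\eta(k):=(nu_0,u_0)_\Om+\eta_1(k)$ gives \eqref{DesiredExpansion}, with $|\eta(k)|\le C\|v_0\|^2_{\mL^2(\Om)}$ after bounding $\|u_0\|_{\mL^2(\Om)}\le C\|v_0\|_{\mL^2(\Om)}$ (Proposition \ref{PropoEstimRegu}). The main obstacle --- and the only place where the special form $n_b=\rho/k^2$ is essential --- is the cancellation that turns the increment $z(k)$ into $(k^2-k_0^2)$ times a single scattering profile: for a generic $k$--independent $n_b$ a term $-(k^2-k_0^2)(n_b-n)v_0$ survives in the source and no expansion of this clean form is available.
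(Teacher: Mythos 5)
Your argument is correct, and it reaches the same leading coefficient by the same final integration by parts, but the way you control the increment of the scattered field is genuinely different from the paper's and is arguably cleaner. The paper Taylor-expands $k\mapsto w(k)$ to second order, $w(k)-w(k_0)=(k-k_0)w'+(k-k_0)^2\tilde w(k)$, where the derivative $w'$ is characterized as the solution of a variational problem posed with the Dirichlet-to-Neumann map $\Lambda(k_0)$ on $\partial B_R$, and the remainder bound requires differentiating the two variational identities and invoking the analyticity of $k\mapsto\Lambda(k)$. You instead observe that, precisely because $k^2n_b(k)=\rho$ kills the $\rho$-terms in the difference of the two Helmholtz equations, the increment factors \emph{exactly} as $w(k)-w(k_0)=(k^2-k_0^2)\zeta_k$ with $\zeta_k$ the outgoing solution of the fixed-source problem $\Delta\zeta_k+k^2n\zeta_k=-n(v_0+w_0)$; the two viewpoints are reconciled by $w'=2k_0\zeta_{k_0}$. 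What your route buys is that no differentiation of the solution map is needed: a uniform bound plus local Lipschitz continuity of $k\mapsto\zeta_k$ into $\mL^2(\Om)$ suffices, and both follow from the resolvent estimates already set up in the Appendix (your passing reference to a Lippmann--Schwinger representation is a slight mismatch with the paper's actual DtN-based argument in the proof of Proposition \ref{ContinuousDependancek}, but the Lipschitz claim is exactly what the analyticity of $\Lambda(k)$ and a Neumann-series perturbation of $\mathcal{T}(k)^{-1}$ deliver, so this is presentational, not a gap). The concluding identity $(nu_0,w_0)_{\Om}=((k_0^2n-\rho)z_1,v_0)_{\Om}$ is the same double integration by parts against $w_0\in\mH^2_0(\Om)$ that the paper performs, read in the opposite direction, and your closing remark correctly isolates where the hypothesis $n_b=\rho/k^2$ is indispensable.
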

\begin{proof}
According to the definition of $T(k)$ in (\ref{defTtC}), we have 
\begin{equation}\label{FirstLine}
{4\pi} (T(k) v_0,v_0)_{\Om} =k^2((n-n_b(k))(v_0+w(k)),v_0)_{\Om}
\end{equation}
where $w(k)$ is the solution of \eqref{eqwart} with $v=v_0$. We remark that, according to the definition of TEs,  the solution $w(k_0)$ of \eqref{eqwart} with $v=v_0$ and $k=k_0$ is such that $w(k_0) =w_0$ in $\Om$ and $w(k_0) =0$ outside $\Om$. We first need to compute the derivative $w'$ of $w$ at $k=k_0$. To proceed, we prove an expansion as $k\to k_0$ of the form
\begin{equation}\label{Expansion}
w(k)-w(k_0)=(k-k_0)w'+(k-k_0)^2\tilde{w}(k),
\end{equation}
where $w'$ is independent from $k$ and where $\tilde{w}(k)$ have bounded norm as $k \to k_0$. Introduce some $R$ large enough so that $\overline{\Om}\subset B_R$ and consider  the Dirichlet-to-Neumann operator $\Lambda(k):\mH^{1/2}(\partial B_R)\to\mH^{-1/2}(\partial B_R)$ such that $\Lambda(k)\varphi=\partial_\nu\psi$ ($\nu$ is oriented to the exterior of $B_R$) where $\psi\in\mH^1_{\loc}(\R^3\setminus B_R)$ is the outgoing function solving 
\begin{equation}\label{DefDTN}
\begin{array}{|rcll}
\Delta\psi+k^2\psi&=&0 & \mbox{ in }\R^3\setminus B_R\\[4pt]
\psi&=&\varphi& \mbox{ on  }\partial B_R.
\end{array}
\end{equation}
Since $k^2n_b(k)=\rho$ in $\Om$, the functions $w(k)$, $w(k_0)$ satisfy, for all $\varphi\in\mH^1(B_R)$,
\begin{equation}\label{TwoLines}
\begin{array}{l}
(\nabla w(k),\nabla \varphi)_{\Om}-k^2(nw(k),\varphi)_{\Om}-\langle \Lambda(k)w(k),\varphi\rangle_{\partial B_R}=((k^2n-\rho)v_0,\varphi)_{\Om},\\[4pt]
(\nabla w(k_0),\nabla \varphi)_{\Om}-k_0^2(nw(k_0),\varphi)_{\Om}-\langle \Lambda(k)w(k_0),\varphi\rangle_{\partial B_R}=((k_0^2 n-\rho)v_0,\varphi)_{\Om}
\end{array}
\end{equation}
where $\langle \cdot,\cdot\rangle_{\partial B_R}$ denotes the $\mH^{-1/2}(\partial B_R)-\mH^{1/2}(\partial B_R)$ duality product. Remark that we used that $w(k_0) =0$ outside $\Om$ to replace $\Lambda(k_0)$ with $\Lambda(k)$ in the second equation.
Computing the difference of the two lines of (\ref{TwoLines}) and taking the limit as $k\to k_0$, one finds that $w'\in\mH^2_{\loc}(\R^3)$ satisfies, for all $\varphi\in\mH^1(B_R)$,
\[
(\nabla w',\nabla \varphi)_{\Om}-k^2_0(nw',\varphi)_{\Om}-\langle \Lambda(k_0)w',\varphi\rangle_{\partial B_R}=2k_0(n (v_0+w_0),\varphi)_{\Om}.
\]
Then one obtains that $\tilde{w}(k)\in\mH^2_{\loc}(\R^3)$ solves,  for all $\varphi\in\mH^1(B_R)$,
\[
\begin{array}{lcl}
(\nabla \tilde{w}(k),\nabla \varphi)_{\Om}-k^2(n\tilde{w}(k),\varphi)_{\Om}-\langle \Lambda(k)\tilde{w}(k),\varphi\rangle_{\partial B_R}&=&(n(v_0+w_0)+(k+k_0)w',\varphi)_{\Om}\\[10pt]
 & & -\langle\ \cfrac{\Lambda(k)-\Lambda(k_0)}{k-k_0}\,w',\varphi\rangle_{\partial B_R}.
\end{array}
\]
Using that the mapping $k \mapsto \Lambda(k)$ is real analytic from $\R^{\ast}_+$ into $\mathcal{L}(\mH^{1/2}(\partial B_R),\mH^{-1/2}(\partial B_R))$, we obtain (using the uniform bounds for scattering problems as in Proposition \ref{PropoEstimRegu})  that there holds $\|\tilde{w}(k) \|_{\mH^2(\Om)} \le C \|v\|_{\mL^2(\Om)}$ for some constant $C$ independent from $k\in I$, $I$ being a given compact set of $\R^{\ast}_+$. Now inserting the expansion  (\ref{Expansion}) in (\ref{FirstLine}) and using that $(T(k_0) v_0 ,v _0)_{\Om} =0$, we get 
\begin{equation}\label{Expansion1}
\frac{4\pi}{ (k-k_0)}(T(k) v_0, v_0)_{\Om}  =2k_0 (n(w_0 + v_0), v_0)_{\Om}  + ((k_0^2n-\rho) w', v_0)_{\Om} + (k-k_0) \eta(k)
\end{equation}
with 
$$
\eta(k) = \left((k^2n-\rho) \tilde w(k) + (k+k_0)n w'+ n  (w_0 + v_0), v_0\right)_{\Om}.
$$
Obviously, the reminder $\eta(k) $ satisfies the uniform estimate indicated in the Proposition. We recall that $w_0 \in \mH^2_0(\Om)$ and satisfies
\begin{equation}\label{hint1}
\Delta w_0 + k_0^2 n w_0 = - (k^2_0 n- \rho) v_0.
\end{equation}
This allows us to write, since $n$ and $\rho$ are real,
\[ \begin{array}{ll}
((k^2_0 n- \rho) w', v_0)_{\Om} = (w', (k^2_0 n- \rho) v_0)_{\Om} &= - ( w', \Delta w_0 + k_0^2 n w_0)_{\Om} \\ &
 = - ( \Delta w' + k_0^2 n w', w_0)_{\Om} = 2k_0 ( n( v_0 + w_0), w_0 )_{\Om}.
\end{array}\]
Using the latter identity in (\ref{Expansion1}), finally we obtain the desired expansion (\ref{DesiredExpansion}).
\end{proof}
\noindent Denote by $\lim_{k\nearrow k_0}$ (resp. $\lim_{k\searrow k_0}$) the limit as $k\to k_0$ with $k<k_0$ (resp. $k>k_0$). Assume that $k_0$ is a TE of (\ref{NewITEP}) and that $(v_0,w_0)\in\mL^2(\Om)\times\mH^2_0(\Om)$ is a corresponding eigenpair. According to identity (\ref{identity1}), there holds $\Im m\,(T(k)v_0,v_0)_{\Om}>0$ when $k$ is not a TE. Using the expansion (\ref{DesiredExpansion}), we deduce that we have
\begin{equation}\label{LineLim}
\cfrac{\Re e\,(T(k)v_0,v_0)_{\Om}}{\Im m\,(T(k)v_0,v_0)_{\Om}}=\cfrac{2k_0(n  (v_0 + w_0), (v_0 + w_0))_{\Om}+(k-k_0)\,\Re e\,\eta(k)}{(k-k_0)\,\Im m\,\eta(k)}\underset{k\nearrow k_0}{\to}-\infty.
\end{equation}
Note that we have $w_0+v_0 \not\equiv0$ in $\Om$ otherwise $w_0\in\mH^2_0(\Om)$ would be null and $v_0$ too. In order to obtain the converse statement of Theorem \ref{TheoPhase1}, we now can apply the result of Proposition \ref{PropoInfSup}. The important point in here, is that for  $n_b$ satisfying \eqref{choixdenb}, i.e. $n_b(k) = \rho/k^2$ in $\Om$ with $\rho$ independent from $k$, the space
$\mrm{H}_{\mrm{inc}}(k)$ is in fact independent from $k$ since \[
\mrm{H}_{\mrm{inc}}(k)=\mrm{H}_{\mrm{inc}} =\{\varphi \in \mL^2(\Om); \; \Delta\varphi+\rho \varphi = 0 \mbox{ in } \Om\}.
\]
We refer the reader to the next paragraph for discussing the case where $n_b$ is constant independent from $k$. 

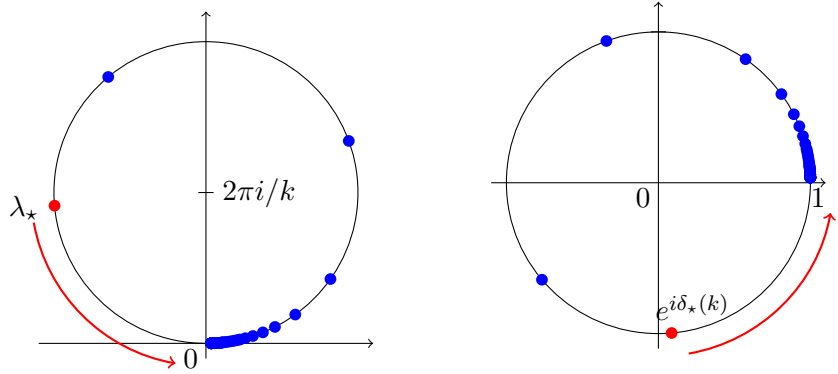
\begin{figure}[!ht]
\centering
\begin{tikzpicture}[scale=1]
\draw (0,2) circle (2);
\draw[->] (0,-0.2)--(0,4.4);
\draw[->] (-2.2,0)--(2.2,0);
\draw (-0.1,2)--(0.1,2);
\node at(0.7,2){$2\pi i/k$};
\node at(-2.4,1.8){$\lambda_\star$};
\node at(-0.2,-0.2){$0$};
\foreach \x in {0.3,0.5,1,2,...,50}
\pgfmathtruncatemacro{\tmp}{110/\x}
\draw[fill=blue,draw=none] ({2*cos(\tmp-90)},{2+2*sin(\tmp-90)}) circle (2.2pt);
\pgfmathtruncatemacro{\tmp}{110/0.4}
\draw[fill=red,draw=none] ({2*cos(\tmp-90)},{2+2*sin(\tmp-90)}) circle (2.2pt);
\begin{scope}[yshift=2cm]
\draw[->,red,thick] (190:2.3) arc (190:260:2.3) ;
\end{scope}
\end{tikzpicture}\qquad\qquad 
\raisebox{0.3cm}{\begin{tikzpicture}[scale=1]
\draw (0,0) circle (2);
\draw[->] (0,-2.2)--(0,2.4);
\draw[->] (-2.2,0)--(2.2,0);
\draw (-0.1,2)--(0.1,2);
\node at(0.45,-1.65){$e^{i\delta_{\star}(k)}$};
\node at(-0.2,-0.2){$0$};
\node at(2.1,-0.2){$1$};
\draw[->,red,thick] (280:2.3) arc (280:350:2.3) ;
\foreach \x in {0.3,0.5,1,2,...,50}
\pgfmathtruncatemacro{\tmp}{110/\x}
\draw[fill=blue,draw=none] ({2*cos(\tmp)},{2*sin(\tmp)}) circle (2.2pt);
\pgfmathtruncatemacro{\tmp}{110/0.4}
\draw[fill=red,draw=none] ({2*cos(\tmp)},{2*sin(\tmp)}) circle (2.2pt);
\end{tikzpicture}}
\caption{Illustration of the behaviours of $k\mapsto \lambda_{\star}(k)$ (left) and $k\mapsto e^{i\delta_{\star}(k)}$ (right) when $k\nearrow k_0$ where $k_0$ is a TE of (\ref{NewITEP}). Here we consider a situation where $n-n_b(k_0)\ge\alpha>0$ in $\Om$ with $n_b(k)=\rho/k^2$.\label{Illustration3}}
\end{figure}

\noindent We now state and prove the second main result of the article.
\begin{theorem} \label{TheoPhase2}
Assume that $n_b(k)=\rho/k^2$ in $\Om$ for some $\rho\in\R$. Let $k_0>0$ be a TE of (\ref{NewITEP}). \\[3pt]
$i)$ Assume that we have $n-n_b(k_0) \ge \alpha >0$ in $\Om$. Then 
\[
\hspace{1.5cm}\lim_{k\nearrow k_0}\delta_{\star}(k)=2\pi \qquad\mbox{(see Figure \ref{Illustration3})}.
\]
$ii)$ Assume that we have $n_b(k_0)-n \ge \alpha >0$ in $\Om$. Then 
\[
\hspace{1.5cm}\lim_{k\searrow k_0}\delta_{\star}(k)=0.\phantom{\qquad\mbox{(see Figure \ref{Illustration3})}}
\]
\end{theorem}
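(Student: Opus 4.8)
The plan is to read $\delta_{\star}(k)$ off the variational characterisation of Proposition~\ref{PropoInfSup}, to feed into it the fixed eigenfunction $v_0$ at $k_0$ as a test function, and to control the resulting Rayleigh quotient by means of the Taylor expansion of Proposition~\ref{TheoPhase3}. The structural ingredient that makes this possible precisely for the scaling \eqref{choixdenb} is that $\mrm{H}_{\mrm{inc}}(k)=\{\varphi\in\mL^2(\Om);\ \Delta\varphi+\rho\varphi=0\ \mbox{in}\ \Om\}$ does not depend on $k$, so a transmission eigenfunction produced at $k_0$ remains an admissible competitor in the $\inf$/$\sup$ over $\mrm{H}_{\mrm{inc}}$ at any nearby $k$.

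First I would fix an eigenpair $(v_0,w_0)\in\mL^2(\Om)\times\mH^2_0(\Om)$ of \eqref{NewITEP} at $k_0$, so that $v_0\in\mrm{H}_{\mrm{inc}}\setminus\{0\}$ and $w_0$ is the restriction to $\Om$ of the solution of \eqref{eqwart} with $v=v_0$, $k=k_0$. Using the continuity of $k\mapsto n_b(k)$ together with the sign hypothesis, I would note that the sign condition persists on a neighbourhood of $k_0$, so that the set of TEs of \eqref{NewITEP} is discrete there and $\delta_{\star}(k)$ is well defined on a punctured neighbourhood $I_0$ of $k_0$; moreover, for $k\in I_0$, the energy identity \eqref{identity1} combined with the Rellich lemma gives $\Im m\,(T(k)v_0,v_0)_{\Om}>0$ (equality would force $w^{\infty}=0$ and hence make $(v_0,w|_\Om)$ an eigenpair of \eqref{NewITEP} at $k$), so the quotient $\Re e\,(T(k)v_0,v_0)_{\Om}/\Im m\,(T(k)v_0,v_0)_{\Om}$ makes sense.

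Then, in case $i)$ I would apply Proposition~\ref{PropoInfSup}$i)$ to get, for $k\in I_0$,
\[
\cot\frac{\delta_{\star}(k)}{2}=\inf_{\varphi\in\mrm{H}_{\mrm{inc}}}\frac{\Re e\,(T(k)\varphi,\varphi)_{\Om}}{\Im m\,(T(k)\varphi,\varphi)_{\Om}}\le\frac{\Re e\,(T(k)v_0,v_0)_{\Om}}{\Im m\,(T(k)v_0,v_0)_{\Om}},
\]
and then invoke \eqref{DesiredExpansion} exactly as in \eqref{LineLim}: since $(n(v_0+w_0),(v_0+w_0))_{\Om}=\int_{\Om}n|v_0+w_0|^2\,dx>0$ (because $v_0+w_0\not\equiv0$ in $\Om$, as observed after \eqref{LineLim}), the right-hand side tends to $-\infty$ as $k\nearrow k_0$. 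Hence $\cot(\delta_{\star}(k)/2)\to-\infty$; since $t\mapsto\cot t$ is a decreasing homeomorphism of $(0,\pi)$ onto $\R$, this forces $\delta_{\star}(k)/2\to\pi$, i.e. $\delta_{\star}(k)\to2\pi$. Case $ii)$ is the mirror image: one uses the $\sup$ of Proposition~\ref{PropoInfSup}$ii)$ and approaches from the right; from \eqref{DesiredExpansion} the same Rayleigh quotient has numerator tending to $2k_0(n(v_0+w_0),(v_0+w_0))_{\Om}>0$ and denominator $(k-k_0)\,\Im m\,\eta(k)$ — with $\Im m\,\eta(k)>0$ forced by $\Im m\,(T(k)v_0,v_0)_{\Om}>0$ — positive and tending to $0$ as $k\searrow k_0$, so $\cot(\delta_{\star}(k)/2)\to+\infty$ and $\delta_{\star}(k)\to0$.

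I do not expect a genuine obstacle here: all the analysis is already packaged in Propositions~\ref{propoTET}, \ref{PropoInfSup} and \ref{TheoPhase3}, and what remains is a one-sided limit of a monotone quantity. The only point that needs care — and the reason the statement is confined to backgrounds of the form \eqref{choixdenb} — is that $\delta_{\star}(k)$ is governed by an extremal problem over $\mrm{H}_{\mrm{inc}}(k)$ while the eigenfunction at hand lives in $\mrm{H}_{\mrm{inc}}(k_0)$; the argument closes precisely because these spaces coincide when $n_b(k)=\rho/k^2$. For a merely continuous (but not $1/k^2$-scaled) $k$-dependence of $n_b$ one would instead have to estimate the mismatch between $\mrm{H}_{\mrm{inc}}(k)$ and $\mrm{H}_{\mrm{inc}}(k_0)$, which is why only the sufficient condition of Theorem~\ref{TheoPhase1} is available in that generality.
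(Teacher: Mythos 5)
Your proposal is correct and follows essentially the same route as the paper: test the $\inf$/$\sup$ characterisation of Proposition~\ref{PropoInfSup} with the fixed eigenfunction $v_0$ (admissible because $\mrm{H}_{\mrm{inc}}(k)$ is $k$-independent for $n_b=\rho/k^2$), then drive the Rayleigh quotient to $\mp\infty$ via the expansion of Proposition~\ref{TheoPhase3} as in \eqref{LineLim}. Your explicit treatment of case $ii)$ and of the positivity of $\Im m\,(T(k)v_0,v_0)_{\Om}$ simply spells out what the paper leaves as ``can be proved similarly.''
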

\begin{proof}
Let us consider the case $n-n_b(k_0) \ge \alpha >0$ in $\Om$. Let $k_0$ be a TE of (\ref{NewITEP}) and $(v_0,w_0)\in\mL^2(\Om)\times\mH^2_0(\Om)$ a corresponding eigenpair. According to Proposition \ref{PropoInfSup}, we have 
\[
\cot\cfrac{\delta_{\star}(k)}{2} = \inf_{\varphi\in\mrm{H}_{\mrm{inc}}}\cfrac{\Re e\,(T(k)\varphi,\varphi)_{\Om}}{\Im m\,(T(k)\varphi,\varphi)_{\Om}} \le \cfrac{\Re e\,(T(k)v_0,v_0)_{\Om}}{\Im m\,(T(k)v_0,v_0)_{\Om}}.
\]
Then from (\ref{LineLim}), which is a consequence of Proposition \ref{TheoPhase3}, we infer that 
\[
\lim_{k\nearrow k_0}\cot\cfrac{\delta_{\star}(k)}{2}=-\infty\qquad\mbox{ and so }\qquad \lim_{k\nearrow k_0}\delta_{\star}(k)=2\pi.
\]
The case $n_b(k_0)-n \ge \alpha >0$ in $\Om$ can be proved similarly.
\end{proof}

\begin{remark}
We emphasize that the case of the ZIM background, that is when $n_b=0$ in $\Om$ and $n_b=1$ in $\R^3\setminus\overline{\Om}$ (see the discussion in (\ref{NewITEPZIM})) is covered by the first statement of Theorem \ref{TheoPhase2}. We also remark that choosing $\rho \le 0$ ensures that we are always in the configuration illustrated by Figure \ref{Illustration3}.
\end{remark}

\subsection{Some remarks on the case $n_b$ independent from $k$}
We here give some indications on the difficulties encountered when making ``the natural choice'' of $n_b$ independent from $k$. In this case one can obtain a similar expansion as in Proposition \ref{TheoPhase3}. 
\begin{proposition} \label{TheoPhase3b} 
Assume that $n_b$ is independent from $k$. Let $k_0>0$ be a TE of (\ref{NewITEP}) and $(v_0,w_0)\in\mL^2(\Om)\times\mH^2_0(\Om)$ an associated eigenpair. Then there is $\eps>0$ such that we have the expansion
\begin{equation}\label{DesiredExpansionb}
4\pi (T(k) v_0, v_0)_{\Om} = 0+\frac{2}{k_0} (k-k_0)  \|\nabla w_0\|_{\mL^2(\Om)}^2 + (k-k_0)^2 \eta(k),
\end{equation}
where the remainder $\eta(k)$ satisfies $|\eta(k)| \le C \|v\|^2_{\mL^2(\Om)}$ with $C>0$ independent from $k\in[k_0-\eps;k_0+\eps]$.
\end{proposition}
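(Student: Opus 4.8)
The plan is to mimic the proof of Proposition~\ref{TheoPhase3}, changing only the algebra at the points where the relation $k^2n_b=\rho$ was used. Starting from (\ref{defTtC}) we have $4\pi(T(k)v_0,v_0)_{\Om}=k^2((n-n_b)(v_0+w(k)),v_0)_{\Om}$, where $w(k)$ is the solution of (\ref{eqwart}) with $v=v_0$; by the very definition of a TE, $w(k_0)=w_0$ in $\Om$ and $w(k_0)=0$ in $\R^3\setminus\overline{\Om}$, so that in particular $w(k_0)\in\mH^2_0(\Om)$. First I would establish, exactly as in Proposition~\ref{TheoPhase3}, the expansion $w(k)-w(k_0)=(k-k_0)w'+(k-k_0)^2\tilde w(k)$ with $w'$ independent of $k$ and $\|\tilde w(k)\|_{\mH^2(\Om)}\le C\|v_0\|_{\mL^2(\Om)}$ uniformly for $k$ close to $k_0$: one rewrites (\ref{eqwart}) in a ball $B_R\supset\overline{\Om}$ using the Dirichlet-to-Neumann operator $\Lambda(k)$ on $\partial B_R$ (using $w(k_0)\in\mH^2_0(\Om)$ to legitimately replace $\Lambda(k_0)$ by $\Lambda(k)$ in the equation satisfied by $w(k_0)$), subtracts the equations for $w(k)$ and $w(k_0)$, divides by $k-k_0$ and lets $k\to k_0$; the remainder $\tilde w(k)$ then solves a scattering problem whose data is uniformly bounded thanks to the real analyticity of $k\mapsto\Lambda(k)$, and the uniform a priori bound of Proposition~\ref{PropoEstimRegu} concludes. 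A by-product is that $w'\in\mH^2_{\loc}(\R^3)$ is the outgoing solution of $\Delta w'+k_0^2nw'=-2k_0\big((n-n_b)v_0+nw_0\big)$ in $\R^3$.

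Next I would insert this expansion into $4\pi(T(k)v_0,v_0)_{\Om}=k^2((n-n_b)(v_0+w(k)),v_0)_{\Om}$. Here, because $n-n_b$ no longer depends on $k$, the zeroth order term simplifies: it equals $k^2\big((n-n_b)(v_0+w_0),v_0\big)_{\Om}$, and since $(T(k_0)v_0,v_0)_{\Om}=0$ by Proposition~\ref{propoTET} we have $\big((n-n_b)(v_0+w_0),v_0\big)_{\Om}=0$, hence this term vanishes identically in $k$. Writing $k^2=k_0^2+(k^2-k_0^2)$ in the remaining contributions, the coefficient of $k-k_0$ is $k_0^2\big((n-n_b)w',v_0\big)_{\Om}$ and everything else is collected into $(k-k_0)^2\eta(k)$, with $\eta(k)=(k+k_0)\big((n-n_b)w',v_0\big)_{\Om}+k^2\big((n-n_b)\tilde w(k),v_0\big)_{\Om}$; the estimate $|\eta(k)|\le C\|v_0\|^2_{\mL^2(\Om)}$ uniformly in $k$ follows from $\|n-n_b\|_{\mL^\infty(\Om)}$, the uniform control of $w'$ and $\tilde w(k)$, and Cauchy--Schwarz.

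It then remains to identify $k_0^2\big((n-n_b)w',v_0\big)_{\Om}$ with $\tfrac{2}{k_0}\|\nabla w_0\|^2_{\mL^2(\Om)}$. Using that $n,n_b$ are real and that $\Delta w_0+k_0^2nw_0=-k_0^2(n-n_b)v_0$ in $\Om$ (this is (\ref{NewITEP}) for the pair $(v_0,w_0)$), one gets $k_0^2\big((n-n_b)w',v_0\big)_{\Om}=-\big(w',\Delta w_0+k_0^2nw_0\big)_{\Om}$; integrating by parts twice, which is legitimate since $w_0\in\mH^2_0(\Om)$ and $w'\in\mH^2(\Om)$, this equals $-\big(\Delta w'+k_0^2nw',w_0\big)_{\Om}$, which by the equation for $w'$ is $2k_0\big((n-n_b)v_0+nw_0,w_0\big)_{\Om}$. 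Finally, replacing $(n-n_b)v_0$ by $-k_0^{-2}\Delta w_0-nw_0$ and using $(\Delta w_0,w_0)_{\Om}=-\|\nabla w_0\|^2_{\mL^2(\Om)}$ (Green's formula, $w_0\in\mH^1_0(\Om)$), the two occurrences of $(nw_0,w_0)_{\Om}$ cancel and one obtains exactly $\tfrac{2}{k_0}\|\nabla w_0\|^2_{\mL^2(\Om)}$, which is (\ref{DesiredExpansionb}).

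I expect the main obstacle to be the uniform-in-$k$ justification of the Taylor expansion of $w(k)$: one must bound the second-order remainder $\tilde w(k)$ in $\mH^2(\Om)$ uniformly near $k_0$, which requires combining the real analyticity of the Dirichlet-to-Neumann map with the uniform a priori estimates of Proposition~\ref{PropoEstimRegu}. The algebraic part is then a short self-contained computation, its only slightly delicate step being the two integrations by parts that use $w_0\in\mH^2_0(\Om)$.
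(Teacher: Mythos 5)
Your proposal is correct and follows essentially the same route as the paper's proof: the same Taylor expansion of $w(k)$ via the Dirichlet-to-Neumann reformulation, the same identification of the first-order coefficient $k_0^2((n-n_b)w',v_0)_{\Om}$ by two integrations by parts against $w_0\in\mH^2_0(\Om)$, and the same uniform control of the remainder. The only difference is cosmetic: in the last step you substitute $(n-n_b)v_0=-k_0^{-2}\Delta w_0-nw_0$ directly, whereas the paper pairs the rewritten equation $\Delta w_0+k_0^2n_bw_0=k_0^2(n_b-n)(v_0+w_0)$ with $w_0$; both yield $\tfrac{2}{k_0}\|\nabla w_0\|^2_{\mL^2(\Om)}$.
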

\begin{proof}
The proof follows the same lines as the proof of Theorem \ref{TheoPhase3}. We here just sketch the main steps. From the definition of $T(k)$ we have 
\begin{equation}\label{FirstLineb}
{4\pi} (T(k) v_0,v_0)_{\Om} =k^2((n-n_b)(v_0+w(k)),v_0)_{\Om}.
\end{equation}
We also have an expansion similar to \eqref{Expansion}
\begin{equation}\label{Expansionb}
w(k)-w(k_0)=(k-k_0)w'+(k-k_0)^2\tilde{w}(k),
\end{equation}
where the derivative $w'$ now satisfies  $w'\in\mH^2_{\loc}(\R^3)$ 
\[
(\nabla w',\nabla \varphi)_{\Om}-k^2_0(nw',\varphi)_{\Om}-\langle \Lambda(k_0)w',\varphi\rangle_{\partial B_R}=2k_0((n-n_b)v_0+n w(k_0),\varphi)_{\Om}
\]
 for all $\varphi\in\mH^1(B_R)$. The reminder  $\tilde{w}(k)\in\mH^2_{\loc}(\R^3)$ solves,  for all $\varphi\in\mH^1(B_R)$,
\[
\begin{array}{lcl}
(\nabla \tilde{w}(k),\nabla \varphi)_{\Om}-k^2(n\tilde{w}(k),\varphi)_{\Om}-\langle \Lambda(k)\tilde{w}(k),\varphi\rangle_{\partial B_R}&=&((n-n_b)v_0+n w(k_0)+(k+k_0)w',\varphi)_{\Om}\\[10pt]
 & & -\langle\ \cfrac{\Lambda(k)-\Lambda(k_0)}{k-k_0}\,w',\varphi\rangle_{\partial B_R}
\end{array}
\]
 and therefore verifies the same uniform bound: $\|\tilde{w}(k) \|_{\mH^2(\Om)} \le C \|v\|_{\mL^2(\Om)}$ for some constant $C$ independent from $k\in I$, $I$ being a given compact set of $\R^{\ast}_+$. Now inserting the expansion  (\ref{Expansionb}) in (\ref{FirstLineb}) and using that $(T(k_0) v_0 ,v_0 )_{\Om} =0$, we get 
\begin{equation}\label{Expansion1b}
\frac{4\pi}{k^2} (T(k) v_0, v_0)_{\Om}  = (k-k_0)((n-n_b) w', v_0)_{\Om} + (k-k_0)^2( \tilde{w}(k),v_0)_{\Om}.
\end{equation}
Since $n$ and $n_b$ are real, and $w_0 \in \mH^2_0(\Om)$ with $\Delta w_0 + k_0^2 n_b w_0 = k^2_0 (n_b-n)v_0$, then
\[
k_0^2((n-n_b) w', v_0)_{\Om} = - ( w', \Delta w_0 + k_0^2 n w_0)_{\Om}  = - ( \Delta w' + k_0^2 n w', w_0)_{\Om} = 2k_0 ( (n-n_b) v_0 + n w, w )_{\Om}.
\]
On the other hand, we also have $\Delta w_0 + k_0^2 n_b w_0 = k^2_0 (n_b-n)(v_0+w_0)$ which implies
$$
k_0^2( (n-n_b) v_0 + n w_0, w_0 )_{\Om} = \|\nabla w_0\|^2_{\mL^2(\Om)}.
$$
We deduce
\[
k_0^2((n-n_b) w', v_0)_{\Om} = \cfrac{2}{k_0}\, \|\nabla w_0\|^2_{\mL^2(\Om)^3}.
\]
Using the latter identity in (\ref{Expansion1b}), one obtains the desired expansion (\ref{DesiredExpansionb}).
\end{proof}

\noindent Notice that we obtain in Proposition \ref{TheoPhase3b} the same sign for the leading term of the expansion of $(T(k) v_0, v_0)_{\Om}$ as in Proposition \ref{TheoPhase3}. Notice also that the results coincide for the case $n_b =0$ since in that case $\Delta w_0 = -n k_0^2 (v_0 + w_0)$ and $\Delta v_0 = 0$ in $\Om$. Therefore $ \|\nabla w_0\|^2_{\mL^2(\Om)} = k_0^2 
(n (v_0 + w_0), w_0)_{\Om}$ and  $k_0^2 (n (v_0 + w_0), v_0)_{\Om} = 0$.\\
\newline
The difficulty in exploiting the result of Proposition \ref{TheoPhase3b} lies in the fact that $v_0$ which is in $\mrm{H}_{\mrm{inc}}(k_0)$ does not belong to $\mrm{H}_{\mrm{inc}}(k)$ when $k\neq k_0$ and $n_b\ne0$ in $\Om$. This is a problem because with the $k$-dependent notation, formula (\ref{quotient1}) (the same is true for (\ref{quotient2})) writes
\begin{equation}\label{Quotientkdep}
\cot\cfrac{\delta_{\star}(k)}{2} = \inf_{\varphi\in\mrm{H}_{\mrm{inc}}(k)}\cfrac{\Re e\,(T(k)\varphi,\varphi)_{\Om}}{\Im m\,(T(k)\varphi,\varphi)_{\Om}}
\end{equation}
when $k$ is not a TE of (\ref{NewITEP}). Therefore in general we cannot  simply take $\varphi=v_0$ in (\ref{Quotientkdep}). In \cite{KiLe13}, when dealing with the case $n_b=1$, the authors took $\varphi=P(k)v_0$ with $P(k)v_0$ equal to the $\mL^2(\Om)$ projection of $v_0$ in $\mrm{H}_{\mrm{inc}}(k)$. But then it is necessary to compute the Taylor expansion of $k\mapsto(T(k)P(k)v_0,P(k)v_0)_{\Om}$ as $k\to k_0$ whose expression does not allow one to conclude simply that
\begin{equation}\label{ResultKiLe}
\lim_{k\nearrow k_0}\cfrac{\Re e\,(T(k)P(k)v_0,P(k)v_0)_{\Om}}{\Im m\,(T(k)P(k)v_0,P(k)v_0)_{\Om}}=-\infty.
\end{equation}
Actually in \cite{KiLe13}, the authors have been able to obtain (\ref{ResultKiLe}) only for the first (classical) TE and for indices $n$ which are constant and large or small enough. 

\section{Numerical experiments: the case of the disk}\label{SectionNum}

In this section, we illustrate the results of Theorems \ref{TheoPhase1} and \ref{TheoPhase2} with simple 2D numerical examples. In the initial problem (\ref{PbChampTotalFreeSpaceIntrod}), we take $n$ equal to a real constant in $\Omega=B_R$ and $n=1$ in $\R^2\setminus\overline{\Omega}$. Here $B_R$ is the ball of radius $R$ centered at the origin. Moreover, for the artificial background in (\ref{PbChampTotalFreeSpaceComp}), we take $\Om=\Omega$, $n_b=\rho / k^2$ in $\Omega$ and $n_b=1$ in $\R^2\setminus\overline{\Omega}$.\\
\newline
First, we compute the TEs of \eqref{NewITEP}. Using decomposition in Fourier series, one finds that if $w$, $v$ solve \eqref{NewITEP}, then $u=v+w$, $v$ admit the expansions 
\[
u(r, \theta)=\sum\limits_{m=-\infty}^{+\infty} b_m J_m\left(k\sqrt{n}r\right) e^{im\theta},\qquad\qquad
v(r, \theta)=\sum\limits_{m=-\infty}^{+\infty} c_m V_m\left(r\right) e^{im\theta},
\] 
where $b_m$, $c_m\in\mathbb{C}$, $J_m$ is the Bessel function of order $m$ and $V_m$ is defined by 
\[
V_m(r):= \left\{\begin{array}{cl}
\displaystyle{J_m\left(\sqrt{\rho}r\right) } & \text{ if $\rho\neq0$ }\\
\displaystyle{r^{|m|}} & \text{ if $\rho=0$.}
\end{array}\right.
\]
Note that we have
\[
 V'_m(r):= \left\{\begin{array}{cl}
\displaystyle{\sqrt{\rho}J'_m\left(\sqrt{\rho}r\right) } & \text{ if $\rho\neq0$ }\\
\displaystyle{|m|r^{|m|-1}} & \text{ if $\rho=0$.}
\end{array}\right.
\]
Imposing the condition $w=\partial_rw=0$ on $\partial B_R$, one obtains that $k$ is a TE of \eqref{NewITEP} if and only if there is some $m\in\mathbb{Z}$ such that
\begin{equation} \label{determinant}
\mbox{det}\left(
\begin{array}{cllrrr}
\displaystyle{V_m(R)} & \displaystyle{J_m(k\sqrt{n}R)} \\
\displaystyle{V'_m(R)} & \displaystyle{k\sqrt{n}J'_m(k\sqrt{n}R)}
\end{array}
 \right) =0.
 \end{equation}
Now we compute the eigenvalues of the modified farfield operator $\mathscr{F}$ defined in (\ref{ModifiedOp}). Using again decomposition in Fourier series, one gets that the functions $u$, $u_i$, $u_s$ solving \eqref{PbChampTotalFreeSpaceIntrod} expand as 
\begin{equation}\label{expansions}
\begin{array}{c}
u(r, \theta)=\dsp\sum_{m=-\infty}^{+\infty} b_mJ_m\left(k\sqrt{n}r\right) e^{im\theta}\mbox{ for }r\le R,\quad\dsp u_i(r, \theta)=\sum\limits_{m=-\infty}^{+\infty} c_m J_m\left(kr\right) e^{im\theta}\mbox{ for }r\le R,
 \\[12pt]
\dsp u_s(r, \theta)=\sum\limits_{m=-\infty}^{+\infty} d_m H^1_m\left(kr\right) e^{im\theta}\mbox{ for }r\ge R,
\end{array}\hspace{-0.2cm}
\end{equation}
where $b_m$, $c_m$, $d_m\in\mathbb{C}$ have to be determined. In the expansion of $u_s$, $H^1_m$ stands for the Hankel function of the first kind of order $m$. Imposing that $u-u_i|_{\partial B_R}=u_s|_{\partial B_R}$ and $\partial_r(u-u_i)|_{\partial B_R}=\partial_ru_s|_{\partial B_R}$, we obtain the system
\[
\left(
\begin{array}{cllrrr}
\displaystyle{J_m(k\sqrt{n}R)} & \displaystyle{-H^1_m(kR)} \\
\displaystyle{k\sqrt{n}J'_m(k\sqrt{n}R)}& \displaystyle{-kH'^{1}_m (kR)}
\end{array}
 \right) 
 \left(
\begin{array}{cllrrr}
\displaystyle{b_m}  \\
\displaystyle{d_m}
\end{array}
 \right) 
 =c_m
 \left(
\begin{array}{cllrrr}
\displaystyle{J_m(kR)}  \\
\displaystyle{kJ'_m(kR)}
\end{array}
 \right).
\]
Therefore $u$ and $u_s$ can be computed from $u_i$ using (\ref{expansions}) and the formula
\[
\begin{array}{cllrrr}
b_m & =c_m B_m \qquad\mbox{ with }\ B_m:=\dsp\frac{-H'^{1}_m (kR)J_m(kR)+H^1_m(kR)J'_m(kR)}{-J_m(k\sqrt{n}R)H'^{1}_m (kR)+\sqrt{n}J'_m(k\sqrt{n}R)H^1_m(kR)}, \\[14pt]
d_m & =c_m D_m \qquad\mbox{ with }\ D_m:=\dsp\frac{-\sqrt{n}J'_m(k\sqrt{n}R)J_m(kR)+J'_m(kR)J_m(k\sqrt{n}R)}{-J_m(k\sqrt{n}R)H'^{1}_m (kR)+\sqrt{n}J'_m(k\sqrt{n}R)H^1_m(kR)}.
\end{array}
\]
For the problem \eqref{PbChampTotalFreeSpaceComp} with artificial background, we can proceed to similar computations. The field $u_{b,s}$ admits a representation as $u_s$ in (\ref{expansions}) with some coefficients $d_{b,m}$ instead of $d_m$. Moreover $u_b$ expands as
\[
u_b(r, \theta)=\sum\limits_{m=-\infty}^{+\infty} b_{b,m}V_m\left(r\right)e^{im\theta}\mbox{ for }r\le R.
\]
Then we deduce that
\[
\begin{array}{cllrrr}
b_{b,m} & =c_m B_{b,m} \qquad\mbox{ with }\ B_{b,m}:=\dsp\frac{-kH'^{1}_m (kR)J_m(kR)+H^1_m(kR)kJ'_m(kR)}{-kV_m(R)H'^{1}_m (kR)+V'_m(R)H^1_m(kR)}, \\[14pt]
d_{b,m} & =c_m D_{b,m} \qquad\mbox{ with }\ D_{b,m}:=\dsp\frac{-V'_m(R)J_m(kR)+kJ'_m(kR)V_m(R)}{-kV_m(R)H'^{1}_m (kR)+V'_m(R)H^1_m(kR)}.
\end{array}
\]
If the incident field is the plane wave $u_i(x):=e^{ik\theta_i\cdot x}$, using Jacobi Anger formula, we obtain $c_m = i^m e^{-im\theta_i}$ . Moreover using that $H^1_m(kr)\underset{r\to+\infty}{\sim}(2/(\pi kr))^{1/2}e^{i(kr-m\pi/2-\pi/4)}$, one finds
\[
u^\infty_{s}(\theta_s,\theta_i)=\sqrt{\frac{2}{\pi k}}e^{-i\pi/4}\sum\limits_{m=-\infty}^{+\infty}e^{im(\theta_s-\theta_i)} D_m.
\]
For $u^\infty_{b,s}(\theta_s,\theta_i)$, we obtain a similar formula replacing $D_m$ with $D_{m,b}$. The far field pattern associated with an incident field coinciding with the Herglotz wave of density 
$$g(\theta)=\sum_{m=-\infty}^{+\infty} a_m e^{im\theta}$$
is then given by
\[
(Fg)(\theta_s)=\sqrt{\frac{8\pi}{k}}e^{-i\pi/4}\sum\limits_{m=-\infty}^{+\infty}a_me^{im\theta_s} D_m.
\]
And a similar expression holds for $F_b$. Thus we obtain analytic formulas of the far field operators $F$ and $F^b$.  One observes that $\theta \mapsto e^{im\theta}$ are the eigenfunctions of $F$, $F^b$ and that the corresponding eigenvalues are respectively given by
\[
\mu_m=\sqrt{\frac{8\pi}{k}}e^{-i\pi/4} D_m, \qquad\qquad
\mu_{b,m}= \sqrt{\frac{8\pi}{k}}e^{-i\pi/4} D_{b,m}.
\]
In 2D, in order to have unitary operators, $S$, $S^b$ are defined from $F$, $F^b$ by 
\[
S:=\mrm{Id}+2ik\frac{e^{-i\pi / 4}}{\sqrt{8\pi k}} F,\qquad\qquad S^b:=\mrm{Id}+2ik\frac{e^{-i\pi / 4}}{\sqrt{8\pi k}} F^b.
\]
Finally, we deduce that the eigenvalues of $\mathscr{S}=(S^b)^{\ast}S$ coincide with the set $\{\gamma_m;\,m\in\mathbb{Z}\}$ with
\[
\gamma_m:=(1-2ik\frac{e^{i\pi / 4}}{\sqrt{8\pi k}}\,\overline{\mu_{b,m}})(1+2ik\frac{e^{-i\pi / 4}}{\sqrt{8\pi k}}\,\mu_m)=(1+2\overline{D_{b,m}})(1+2D_m).
\]
We denote by $\hat{\delta}_m\in[0;2\pi)$ the phases of the $\gamma_m$. The $\delta_m$ introduced in (\ref{DefDelta}) then correspond to the ordered $\hat{\delta}_m$.\\
\newline
In Figure \ref{BckFig}, we take $n=2$ and $\rho=0$ (ZIM background) in $B_1$. We display the curves $k\mapsto \hat{\delta}_m(k)\in[0;2\pi)$ for $k\in(1;5.5)$. Each colour corresponds to a different value of $m\in\{0,\dots,300\}$. The vertical dotted lines represent the TEs of \eqref{NewITEP} computed by solving the determinant equation \eqref{determinant}. In $\Om$, we have $n-n_b=2>0$. And we see that the $\hat{\delta}_m$ accumulate only at $0$. This is coherent with the result of Proposition \ref{PropoOutsideTEs}. The black line represents the curve $k\mapsto \hat{\delta}_{\star}(k)$. In accordance with the statements of Theorems \ref{TheoPhase1} and \ref{TheoPhase2}, we observe that $\delta_{\star}$ tends to $2\pi$ as $k\nearrow k_0$ only when $k_0$ is TE of \eqref{NewITEP}.

\begin{figure}[!ht]
\centering
\includegraphics[width=9cm,trim={1cm 0.4cm 1cm 0.7cm},clip]{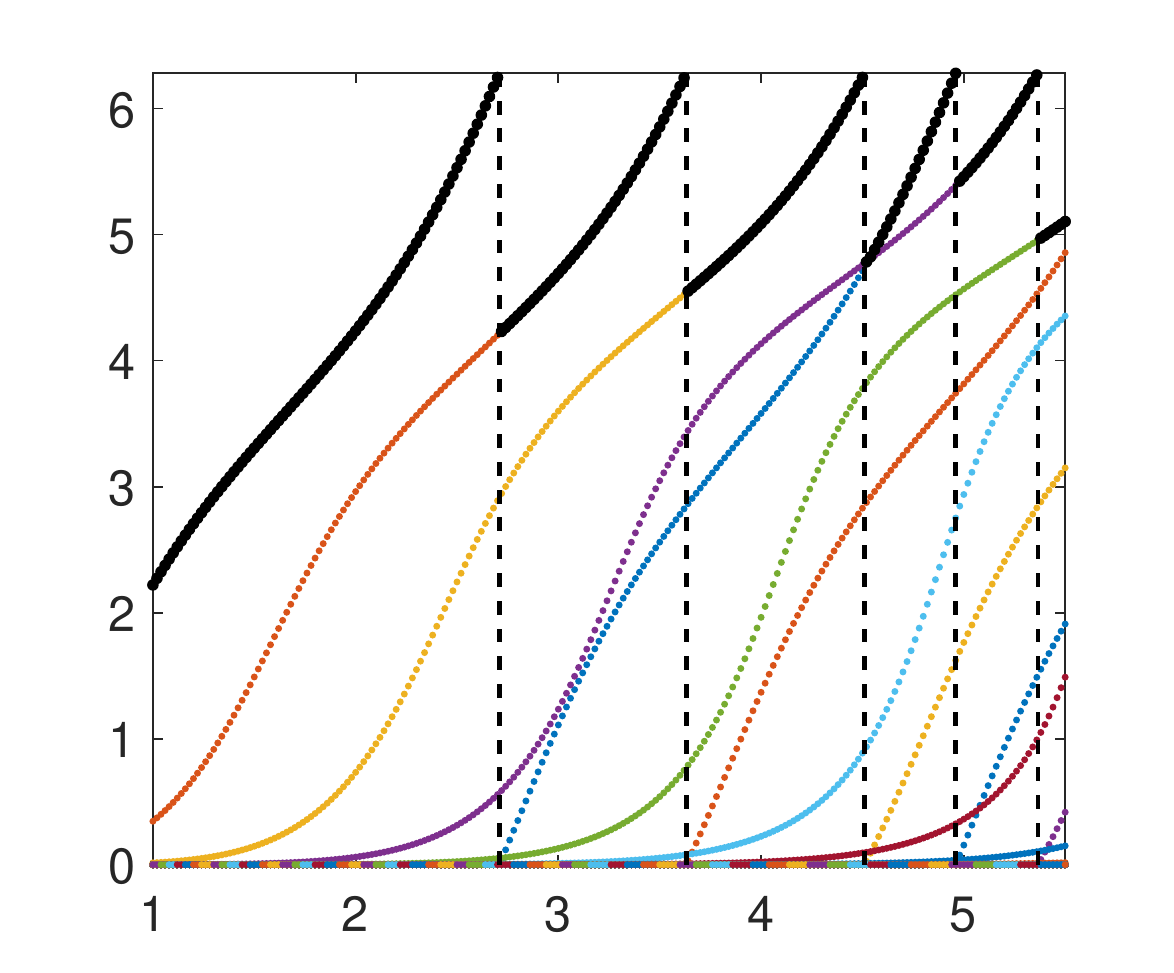}
\caption{We take $n=2$ and $\rho=0$ (ZIM background) in $B_1$ and we display the curves $k\mapsto \hat{\delta}_m(k)\in[0;2\pi)$ for $k\in(1;5.5)$. Each colour corresponds to a different value of $m\in\{0,\dots,300\}$.  The vertical dotted lines represent the TEs of \eqref{NewITEP} computed by solving the determinant equation \eqref{determinant}. The black line represents $k\mapsto \delta_{\star}(k)$.}
\label{BckFig}
\end{figure}

\noindent In Figures \ref{BckFigrho1}, \ref{BckFigrho1smallF}, we display similar curves in the case $n=2$ and $\rho=1$ in $B_1$. For Figure \ref{BckFigrho1}, we take $k\in(1/\sqrt{2};5.5)$. In this case, we have $n-n_b>0$ in $\Om$ and so the $\hat{\delta}_m$ accumulate only at $0$. For Figure \ref{BckFigrho1smallF}, we take $k\in(0;1/\sqrt{2})$. Then there holds $n-n_b<0$ in $\Om$ and the $\hat{\delta}_m$ accumulate only at $2\pi$. Note that for all these examples, we observe numerically that if $k_0$ is a TE of \eqref{NewITEP}, then the value of $m$ for which the determinant \eqref{determinant} is equal to zero coincides with the value of $p$ for which $k\mapsto \hat{\delta}_{p}(k)$ tends to $2\pi$ as $k\nearrow k_0$. This is coherent with the second part of the statement of Theorem \ref{TheoPhase1}.

\begin{figure}[!ht]
\centering
\includegraphics[width=9cm,trim={1cm 0.4cm 1cm 0.7cm},clip]{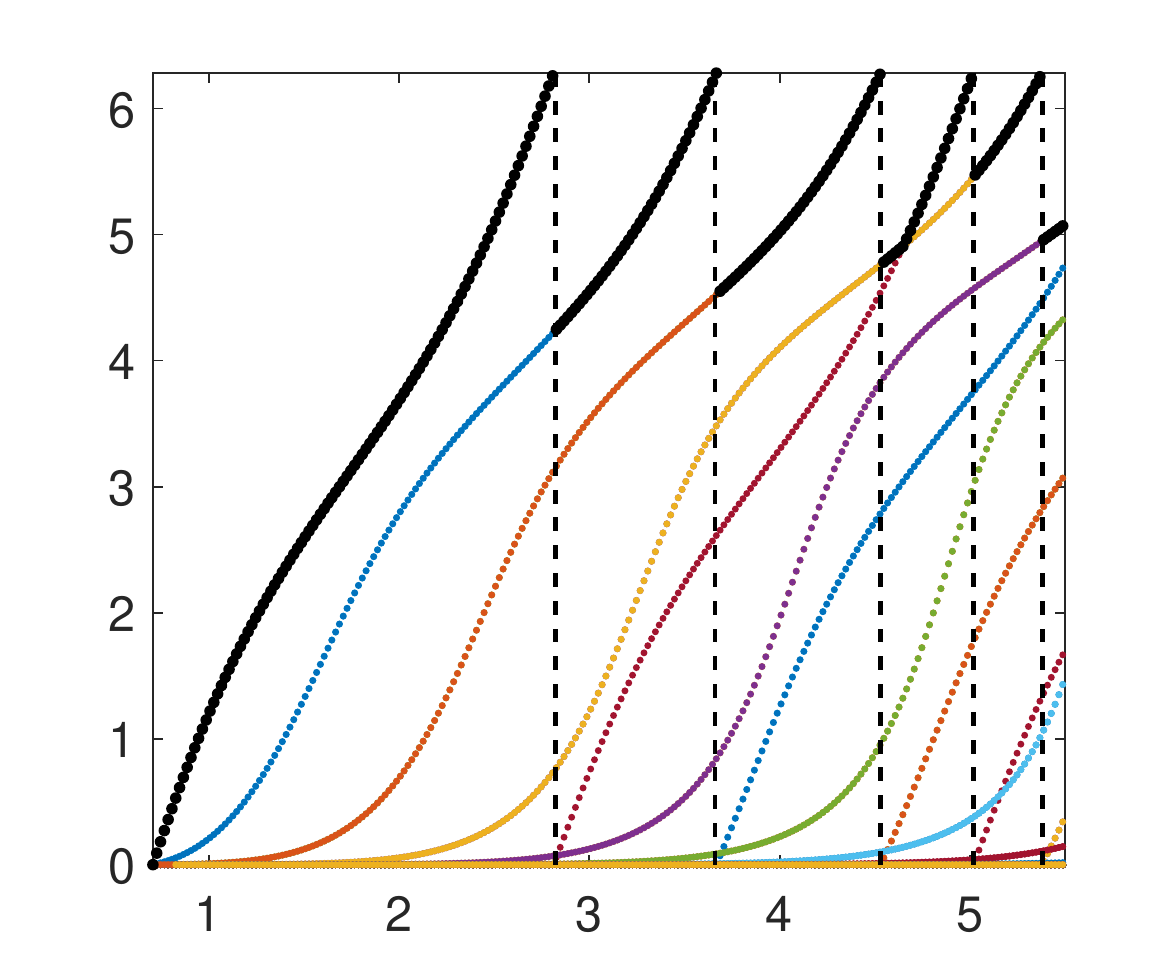}
\caption{We take $n=2$ and $\rho=1$ in $B_1$ and we display the curves $k\mapsto \hat{\delta}_m(k)\in[0;2\pi)$ for $k\in(1/\sqrt{2};5.5)$. Each colour corresponds to a different value of $m\in\{0,\dots,300\}$.  The vertical dotted lines represent the TEs of \eqref{NewITEP} computed by solving the determinant equation \eqref{determinant}. The black line represents $k\mapsto \delta_{\star}(k)$.}
\label{BckFigrho1}
\end{figure}

\begin{figure}[!ht]
\centering
\includegraphics[width=9cm,trim={1cm 0.4cm 1cm 0.7cm},clip]{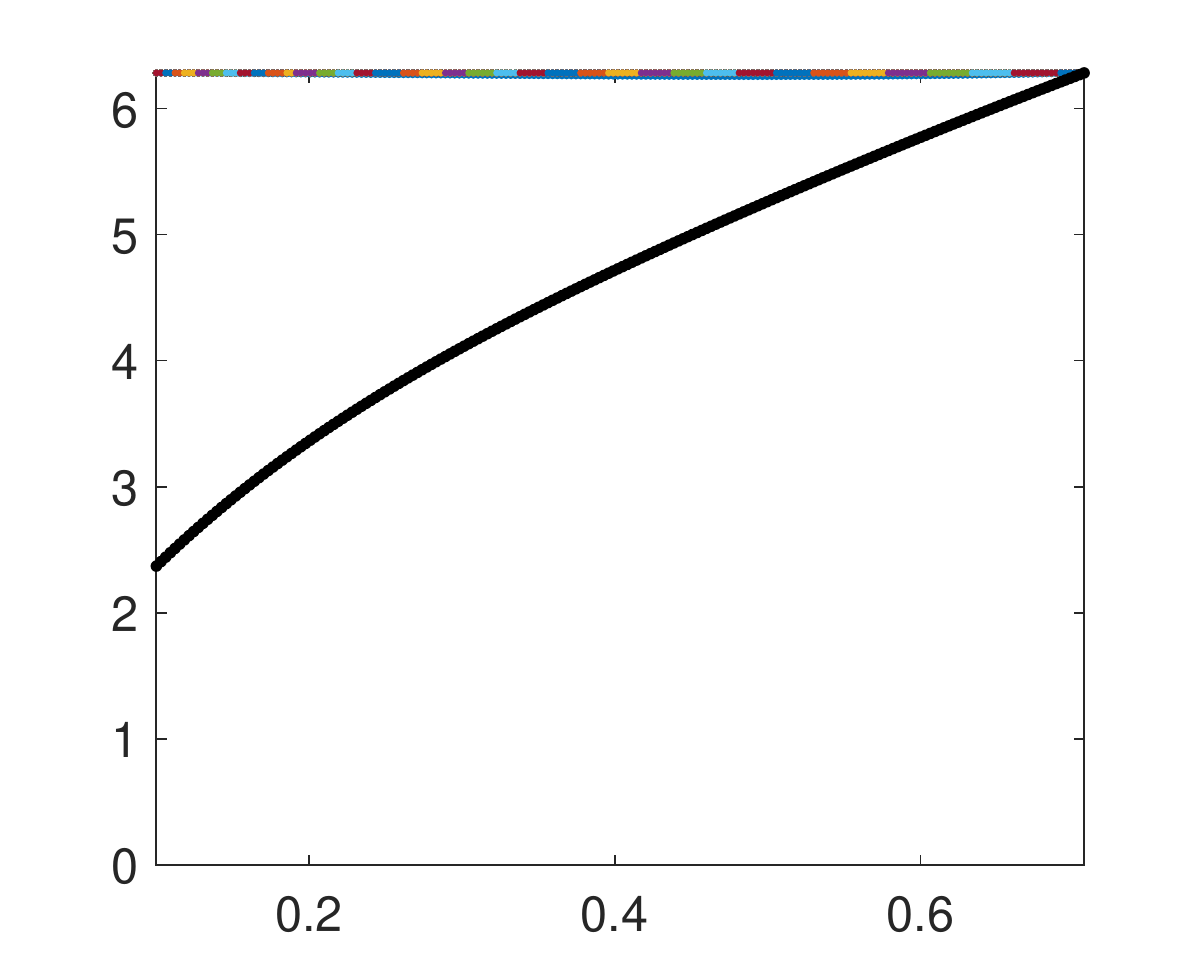}
\caption{We take $n=2$ and $\rho=1$ in $B_1$ and we display the curves $k\mapsto \hat{\delta}_m(k)\in[0;2\pi)$ for $k\in(0;1/\sqrt{2})$. Each colour corresponds to a different value of $m\in\{0,\dots,300\}$.  The black line represents $k\mapsto \delta_{\star}(k)$. Here there is no TE of \eqref{NewITEP} in this range of $k$ and as expected $\delta_{\star}(k)$ does not go to zero. }
\label{BckFigrho1smallF}
\end{figure}

\clearpage
\newpage

\section{Appendix}
In this Appendix, we prove Proposition \ref{PropoEstimRegu} ($v\mapsto w(k)$ is uniformly continuous from $\mL^2(\Om)$ to $\mH^2(\Om)$ in any compact set of $\R^{\ast}_+$) and Proposition \ref{ContinuousDependancek} ($H(k)$, $\mathscr{F}(k)$, $T(k)$ have continuous dependence with respect to $k$ in the operator norm).\\
\newline
\noindent\textit{Proof of Proposition \ref{ContinuousDependancek}. }
Introduce again some $R$ large enough so that $\overline{\Om}\subset B_R$. For a given $v\in\mL^2(\Om)$, denote $w(k)$ the solution of (\ref{eqwart}). 
For all $k\in\R^{\ast}_{+}$, the function $w(k)|_{B_R}$ satisfies the variational equality
\begin{equation}\label{VariaEqua}
(\nabla w(k),\nabla w')_{\Om}-k^2(n w(k),w')_{\Om}-\langle \Lambda(k)w(k),w'\rangle_{\partial B_R}=k^2((n-n_b(k))v,w')_{\Om},
\end{equation}
for all $w'\in\mH^1(B_R)$, where $\Lambda(k):\mH^{1/2}(\partial B_R)\to\mH^{-1/2}(\partial B_R)$ is the Dirichlet-to-Neumann operator defined in (\ref{DefDTN}). With the Riesz representation theorem, introduce the linear and bounded operator $\mathcal{T}(k):\mH^1(B_R)\to\mH^1(B_R)$ and the function $F(k)\in\mH^1(B_R)$ such that 
\[
\begin{array}{lcl}
(\mathcal{T}(k)\varphi,\varphi')_{\mH^1(B_R)}&=&(\nabla \varphi,\nabla \varphi')_{\Om}-k^2(n\varphi,\varphi')_{\Om}-\langle \Lambda(k)\varphi,\varphi'\rangle_{\partial B_R},\qquad\forall w,w'\in\mH^1(B_R)\\[3pt]
(F(k),\varphi')_{\mH^1(B_R)}&=&((n-n_b(k))v,\varphi')_{\Om},\qquad \forall w'\in\mH^1(B_R).
\end{array}
\]
With this notation, $w(k)$ solves (\ref{VariaEqua}) if and only if it satisfies
\[
\mathcal{T}(k)w(k)=k^2F(k).
\]
The Fredholm theory (injectivity holds thanks to the Rellich lemma) guarantees that $\mathcal{T}(k):\mH^1(B_R)\to\mH^1(B_R)$ is an isomorphism for all $k\in(0;+\infty)$. Using the explicit definition of $\Lambda(k)$, one can prove that the map $k\mapsto \Lambda(k)$ is continuous from $\R^{\ast}_{+}$ to $\mathcal{L}(\mH^{1/2}(\partial B_R),\mH^{-1/2}(\partial B_R))$. This allows us to show that $k\mapsto \mathcal{T}(k)$ is continuous
from $\R^{\ast}_{+}$ to $\mathcal{L}(\mH^1(B_R),\mH^1(B_R))$. Now, writing $\mathcal{T}(k)=\mathcal{T}(k_0)+(\mathcal{T}(k)-\mathcal{T}(k_0))$, we infer from the results on Neumann series that $k\mapsto \mathcal{T}(k)^{-1}$ is also continuous from $\R^{\ast}_{+}$ to $\mathcal{L}(\mH^1(B_R),\mH^1(B_R))$. As a consequence, $k\mapsto \mathcal{T}(k)^{-1}$ is uniformly bounded in any compact set $I\subset\R^{\ast}_{+}$. Since $k\mapsto F(k)$ is continuous, we have $\|F(k)\|_{\mH^1(B_R)}\le C\,\|v\|_{\mL^2(\Om)}$, where $C>0$ is independent from $k\in I$, $v$. We deduce the estimate $\|w(k)\|_{\mH^1(B_R)} \le C\,\|v\|_{\mL^2(\Om)}$ where $C$ is independent from $k\in I$, $v$. Finally, results of interior regularity leads to the estimate $
\|w(k)\|_{\mH^2(\Om)} \le C\,\|v\|_{\mL^2(\Om)}$ where $C$ is independent from $k\in I$, $v\in\mL^2(\Om)$.\hfill $\square$\\
\newline
\noindent\textit{Proof of Proposition \ref{ContinuousDependancek}. }Let us first show that $k\mapsto T(k)$ is continuous from $\R^{\ast}_{+}$ to $\mathcal{L}(\mL^2(\Om),\mL^2(\Om))$. We have 
\[
4\pi\,T(k)v=k^2(n - n_b(k)) (v + w(k)).
\]
From Proposition \ref{PropoEstimRegu} which guarantees that $v\mapsto w(k)$ is uniformly continuous from $\mL^2(\Om)$ to $\mH^2(\Om)$ for $k\in I$, $I$ being any compact set of $\R^{\ast}_+$, we deduce that $k\mapsto T(k)$ is continuous from $\R^{\ast}_{+}$ to $\mathcal{L}(\mL^2(\Om),\mL^2(\Om))$.\\
Now let us consider the continuity of $k\mapsto \herg(k)$ from $\R^{\ast}_{+}$ to $\mathcal{L}(\mL^2(\mathbb{S}^2),\mL^2(\Om))$. For all $g\in\mL^2(\mathbb{S}^2)$, we have $\herg(k) g=(u_i(k)+u_{b,s}(k))|_{\Om}$ where $u_{b,s}(k)$ is the scattered field of the solution of (\ref{PbChampTotalFreeSpaceComp}) with $u_i(k)=\int_{\mathbb{S}^2}g(\theta_{i})e^{ik\theta_{i}\cdot x}\,ds(\theta_{i})|_{\Om}$. Clearly $k\mapsto u_i(k)|_{\Om}$ is continuous from $\R^{\ast}_{+}$ to $\mathcal{L}(\mL^2(\mathbb{S}^2),\mL^2(\Om))$. And working exactly as for $k\mapsto T(k)$, one shows that $k\mapsto u_{b,s}(k)|_{\Om}$ is continuous from $\R^{\ast}_{+}$ to $\mathcal{L}(\mL^2(\mathbb{S}^2),\mL^2(\Om))$.\\ 
Finally, from the factorisation $\mathscr{F}(k)=H^{\ast}(k)T(k)H(k)$, we deduce that the mapping $k \mapsto \mathscr{F}(k)$ is continuous from $\R^{\ast}_{+}$ to $\mathcal{L}(\mL^2(\mathbb{S}^2),\mL^2(\mathbb{S}^2))$.\hfill $\square$

\section*{Acknowledgements}
This work was supported by a public grant as part of the
Investissement d'avenir project, reference ANR-11-LABX-0056-LMH,
LabEx LMH.

\bibliography{Bibli}

\def\cprime{$'$}
\begin{thebibliography}{10}

\bibitem{Audithesis}
L.~Audibert.
\newblock {\em {Qualitative Methods for Heterogeneous Media}}.
\newblock PhD thesis, \'{E}cole Polytechnique, Palaiseau, France, 2015.

\bibitem{AuCH17}
L.~Audibert, F.~Cakoni, and H.~Haddar.
\newblock New sets of eigenvalues in inverse scattering for inhomogeneous media
  and their determination from scattering data.
\newblock {\em Inverse Problems}, 33(12):125011, 2017.

\bibitem{AuCH18}
L.~Audibert, L.~Chesnel, and H.~Haddar.
\newblock Transmission eigenvalues with artificial background for explicit
  material index identification.
\newblock {\em C. R. Math. Acad. Sci. Paris}, 356(6):626--631, 2018.

\bibitem{AuHa14}
L.~Audibert and H.~Haddar.
\newblock A generalized formulation of the linear sampling method with exact
  characterization of targets in terms of farfield measurements.
\newblock {\em Inverse Problems}, 30:035011, 2014.

\bibitem{BlPS14}
E.~Bl{\aa}sten, L.~P{\"a}iv{\"a}rinta, and J.~Sylvester.
\newblock Corners always scatter.
\newblock {\em Commun. Math. Phys.}, 331(2):725--753, 2014.

\bibitem{CaCH10}
F.~Cakoni, D.~Colton, and H.~Haddar.
\newblock On the determination of {D}irichlet or transmission eigenvalues from
  far field data.
\newblock {\em C. R. Math. Acad. Sci. Paris}, 348(7-8):379--383, 2010.

\bibitem{CaCoHa16}
F.~Cakoni, D.~Colton, and H.~Haddar.
\newblock {\em Inverse Scattering Theory and Transmission Eigenvalues},
  volume~88.
\newblock SIAM, 2016.
\newblock CBMS Series.

\bibitem{CCMM16}
F.~Cakoni, D.~Colton, S.~Meng, and P.~Monk.
\newblock Stekloff eigenvalues in inverse scattering.
\newblock {\em SIAM J. Appl. Math.}, 76(4):1737--1763, 2016.

\bibitem{use1}
F.~Cakoni, D.~Colton, and P.~Monk.
\newblock On the use of transmission eigenvalues to estimate the index of
  refraction from far field data.
\newblock {\em Inverse Problems}, 23:507--522, 2007.

\bibitem{CaGH10}
F.~Cakoni, D.~Gintides, and H.~Haddar.
\newblock The existence of an infinite discrete set of transmission
  eigenvalues.
\newblock {\em SIAM J. Math. Anal.}, 42(1):237--255, 2010.

\bibitem{CaHa13b}
F.~Cakoni and H.~Haddar.
\newblock Transmission eigenvalues in inverse scattering theory inverse
  problems and applications, {Inside Out} 60, 2013.

\bibitem{Ches16}
L.~Chesnel.
\newblock Bilaplacians problems with a sign-changing coefficient.
\newblock {\em Math. Meth. Appl. Sci.}, 39(17):4964--4979, 2016.

\bibitem{Cogar_2017}
S~Cogar, D~Colton, S~Meng, and P~Monk.
\newblock Modified transmission eigenvalues in inverse scattering theory.
\newblock {\em Inverse Problems}, 33(12):125002, nov 2017.

\bibitem{CoKr01}
D.~Colton and R.~Kress.
\newblock {On the denseness of Herglotz wave functions and electromagnetic
  Herglotz pairs in Sobolev spaces}.
\newblock {\em Math. Methods Appl. Sci.}, 24(16):1289--1303, 2001.

\bibitem{CoKr13}
D.~Colton and R.~Kress.
\newblock {\em Inverse acoustic and electromagnetic scattering theory. 3rd
  ed.}, volume~93 of {\em Appl. Math. Sci.}
\newblock Springer-Verlag, Berlin, 2013.

\bibitem{CoLe13}
D.~Colton and {Y.-J.} Leung.
\newblock Complex eigenvalues and the inverse spectral problem for transmission
  eigenvalues.
\newblock {\em Inverse Problems}, 29(10):104008, 2013.

\bibitem{DEPSU95}
B.~Dietz, J.-P. Eckmann, C.-A. Pillet, U.~Smilansky, and I.~Ussishkin.
\newblock Inside-outside duality for planar billiards: A numerical study.
\newblock {\em Phys. Rev. E}, 51(5):4222, 1995.

\bibitem{DoSm92}
E.~Doron and U.~Smilansky.
\newblock Semiclassical quantization of chaotic billiards: a scattering theory
  approach.
\newblock {\em Nonlinearity}, 5(5):1055, 1992.

\bibitem{EcPi95}
J.-P. Eckmann and C.-A. Pillet.
\newblock Spectral duality for planar billiards.
\newblock {\em Commun. Math. Phys.}, 170(2):283--313, 1995.

\bibitem{EcPi97}
J-P Eckmann and C-A Pillet.
\newblock Zeta functions with dirichlet and neumann boundary conditions for
  exterior domains.
\newblock {\em Helv. Phys. Acta}, 70:44--65, 1997.

\bibitem{ElHu15}
J.~Elschner and G.~Hu.
\newblock Corners and edges always scatter.
\newblock {\em Inverse Problems}, 31(1):015003, 2015.

\bibitem{ElHu18}
J.~Elschner and G.~Hu.
\newblock Acoustic scattering from corners, edges and circular cones.
\newblock {\em Arch. Ration. Mech. Anal.}, 228(2):653--690, 2018.

\bibitem{GiHa12}
G.~Giorgi and H.~Haddar.
\newblock Computing estimates of material properties from transmission
  eigenvalues.
\newblock {\em Inverse Problems}, 28(5):055009, 23, 2012.

\bibitem{GrHa18}
R.~Griesmaier and B.~Harrach.
\newblock Monotonicity in inverse medium scattering on unbounded domains.
\newblock {\em arXiv preprint arXiv:1802.06264}, 2018.

\bibitem{GMMR12}
Y.~Grisel, V.~Mouysset, P.-A. Mazet, and J.-P. Raymond.
\newblock Determining the shape of defects in non-absorbing inhomogeneous media
  from far-field measurements.
\newblock {\em Inverse Problems}, 28(5):055003, 2012.

\bibitem{Harris_2014}
Isaac Harris, Fioralba Cakoni, and Jiguang Sun.
\newblock Transmission eigenvalues and non-destructive testing of anisotropic
  magnetic materials with voids.
\newblock {\em Inverse Problems}, 30(3):035016, feb 2014.

\bibitem{KiGr08}
A.~Kirsch and N.~Grinberg.
\newblock {\em The factorization method for inverse problems}, volume~36.
\newblock 2008.

\bibitem{KiLe13}
A.~Kirsch and A.~Lechleiter.
\newblock The inside-outside duality for scattering problems by inhomogeneous
  media.
\newblock {\em Inverse Problems}, 29(10):104011, 2013.

\bibitem{LaLe16}
E.~Lakshtanov and A.~Lechleiter.
\newblock A factorization method and monotonicity bounds in inverse medium
  scattering for contrasts with fixed sign on the boundary.
\newblock {\em arXiv preprint arXiv:1602.02883}, 2016.

\bibitem{LaVa15}
E.~Lakshtanov and B.~Vainberg.
\newblock {Sharp Weyl law for signed counting function of positive interior
  transmission eigenvalues}.
\newblock {\em SIAM Journal on Mathematical Analysis}, 47(4):3212--3234, 2015.

\bibitem{LePe15}
A.~Lechleiter and S.~Peters.
\newblock Determining transmission eigenvalues of anisotropic inhomogeneous
  media from far field data.
\newblock {\em Commun. Math. Sci}, 13(7):1803--1827, 2015.

\bibitem{LePe15bis}
A.~Lechleiter and S.~Peters.
\newblock The inside--outside duality for inverse scattering problems with near
  field data.
\newblock {\em Inverse Problems}, 31(8):085004, 2015.

\bibitem{LeRe15}
A.~Lechleiter and M.~Rennoch.
\newblock Inside-outside duality and the determination of electromagnetic
  interior transmission eigenvalues.
\newblock {\em SIAM J. Math. Anal.}, 47(1):684--705, 2015.

\bibitem{NaPT07}
A.~I Nachman, L.~P{\"a}iv{\"a}rinta, and A.~Teiril{\"a}.
\newblock On imaging obstacles inside inhomogeneous media.
\newblock {\em J. Funct. Anal.}, 252(2):490--516, 2007.

\bibitem{PaSV14}
L.~P{\"a}iv{\"a}rinta, M.~Salo, and {E.V.} Vesalainen.
\newblock Strictly convex corners scatter.
\newblock {\em arXiv preprint arXiv:1404.2513}, 2014.

\end{thebibliography}
\bibliographystyle{plain}

\end{document}